\DeclareMathOperator{\Ker}{Ker}
\DeclareMathOperator{\Image}{Im}
\DeclareMathOperator{\rk}{rk}
\DeclareMathOperator{\interior}{int}
\DeclareMathOperator{\ad}{\mathrm{ad}}
\DeclareMathOperator{\Ad}{\mathrm{Ad}}
\DeclareMathOperator{\sspan}{\mathrm{span}}
\DeclareMathOperator{\codim}{\mathrm{codim}}
\DeclareMathOperator{\cchar}{\mathrm{char}}
\newcommand{\f}{\mathfrak{f}}
\newcommand{\g}{\mathfrak{g}}
\newcommand{\h}{\mathfrak{h}}
\newcommand{\kk}{\mathfrak{k}}
\newcommand{\W}{\mathcal{W}}
\newcommand{\orbit}{\mathcal{O}}
\newcommand{\R}{\mathbb{R}}
\newcommand{\Kk}{\mathbb{K}}
\newcommand{\id}{\mathrm{id}}
\newcommand{\GL}{\mathrm{GL}}
\newcommand{\const}{\mathrm{const}}
\theoremstyle{definition}
\newtheorem{definition}{Definition}
\newtheorem{remark}{Remark}
\newtheorem{example}{Example}
\theoremstyle{plain}
\newtheorem{corollary}{Corollary}
\newtheorem{theorem}{Theorem}
\newtheorem{proposition}{Proposition}
\newenvironment{enumerate*}%
  {\begin{enumerate}%
    \setlength{\itemsep}{1pt}%
    \setlength{\parskip}{1pt}}%
  {\end{enumerate}}
\title{Casimir functions of free nilpotent Lie groups\\ of steps three and four\footnote{This work is supported by the Russian Science Foundation
under grant 17-11-01387-P and performed in A.\,K.~Ailamazyan Program Systems
Institute of Russian Academy of Sciences.}}
\author{
A.\,V.~Podobryaev \\ A.\,K.~Ailamazyan Program Systems
Institute of RAS \\ \tt{alex@alex.botik.ru}
}
\begin{document}

\maketitle

\begin{abstract}
Any free nilpotent Lie algebra is determined by its rank and step.
We consider free nilpotent Lie algebras of steps 3, 4 and corresponding connected and simply connected Lie groups.
We construct Casimir functions of such groups, i.e., invariants of the coadjoint representation.



For free 3-step nilpotent Lie groups we get a full description of coadjoint orbits.
It turns out that general coadjoint orbits are affine subspaces, and
special coadjoint orbits are affine subspaces or direct products of nonsingular quadrics.

The knowledge of Casimir functions is useful for investigation of integration properties of dynamical systems and optimal control problems on Carnot groups.
In particular, for some wide class of time-optimal problems on 3-step free Carnot groups we conclude that extremal controls corresponding to two-dimensional coadjoint orbits have the same behavior as in time-optimal problems on the Heisenberg group or on the Engel group.

\textbf{Keywords}: free Carnot group, coadjoint orbits, Casimir functions, integration, geometric control theory, sub-Riemannian geometry, sub-Finsler geometry.

\textbf{AMS subject classification}:
22E25, 
17B08, 
53C17, 
35R03. 

\end{abstract}

\section*{\label{sec-introduction}Introduction}

The goal of this paper is a description of Casimir functions and coadjoint orbits for some class of nilpotent Lie groups. This knowledge is important for the theory of left-invariant optimal control problems on Lie groups~\cite{argachev-sachkov}. Casimir functions are integrals of the Hamiltonian system of Pontryagin maximum principle. Moreover, coadjoint orbits are invariant under the flow of the vertical part of this Hamiltonian system.

Note that left-invariant sub-Riemannian problems~\cite{agrachev-barilary-boscain}, sub-Finsler problems~\cite{barilari-boscain-ledonne-sigalotti} on Lie groups and in general problem of finding the shortest arcs for intrinsic left-invariant metrics on Lie groups~\cite{berestovskii} can be also viewed as optimal control problems.

Nilpotent Lie groups play a key role in the theory of left-invariant optimal control problems in view of existence of nilpotent approximation~\cite{agrachev-sarychev}. Any nilpotent Lie group is a quotient of a free nilpotent Lie group. A connected and simply connected free nilpotent Lie group is determined by its rank and step. Sub-Riemannian and sub-Finsler problems on such groups were investigated only for step 2, 3 and ranks 2, 3.
For step 2 it is known that rank 2 corresponds to the Heisenberg group~\cite{berestovskii-heisenberg}, rank 3 sub-Riemannian structure was completely investigated by O.~Myasnichenko~\cite{myasnichenko}, for bigger ranks there are some partial results~\cite{rizzi-serres,montanari-morbidelli,mashtakov}. For step 3 sub-Riemannian and sub-Finsler geodesics were investigated in~\cite{sachkov-didona,ardentov-ledonne-sachkov} and for sub-Finsler structures some results were obtained by Yu.\,L.~Sachkov~\cite{sachkov-subfinsler-2-step}.

It turns out that the Hamiltonian system of Pontryagin maximum principle is integrated in trigonometric functions for step 2 and in elliptic functions for step 3 and rank 2, but for bigger ranks the normal Hamiltonian system is not Liouville integrable~\cite{bizyaev-borisov-kilin-mamaev}. For sub-Riemannian structures of step more than 3 the geodesic flow is also not Liouville integrable~\cite{lokutsievskii-sachkov}.

This paper is an extension to step 3 and rank greater than 2 of the results of the paper~\cite{sachkov-two-step} on step 2.
In addition to linear Casimir functions corresponding to elements of maximal degree of a Lie algebra there are Casimir functions
that are linear on joint level sets of the first ones.
So, general coadjoint orbits are affine subspaces.

The exceptional case is the free nilpotent Lie group of step 3 and rank 2 called the Cartan group. In this case there are two independent linear Casimir functions and a quadratic one.

Special coadjoint orbits are distinguished by functions constructed by a two-step free Carnot groups of lower rank.
These orbits are affine subspaces or direct products of nonsingular quadrics.

It turns out that for free 4-step nilpotent Lie groups there are Casimir functions that are quadratic on joint level sets of linear ones.

The structure of coadjoint orbits was investigated by A.\,S.~Vorontsov~\cite{vorontsov} for any Lie algebras with the help of some reduction
based on S.\,T.~Sadetov's method~\cite{sadetov}. Let $G$ be a Lie group.
It turns out that if the corresponding Lie algebra $\g$ contains a commutative ideal $\mathfrak{i}$
(i.e., this ideal is invariant under the adjoint representation $\Ad_{\mathfrak{i}} : G \rightarrow \GL(\mathfrak{i})$),
then the coadjoint orbits on $\g^*$ are locally trivial bundles over the orbits of the action $\Ad_{\mathfrak{i}}^* : G \rightarrow \GL(\mathfrak{i}^*)$.
But in our case this commutative ideal coincides with the center of the Lie algebra and the action on $\mathfrak{i}^*$ is trivial.
The bases of corresponding bundles are points and the fibers are the coadjoint orbits.
So, this method does not work.

As an application we consider a wide class of time-optimal problems on three-step free nilpotent Lie groups.
We conclude that extremal controls corresponding to two-dimensional coadjoint orbits have the same behavior as in time-optimal problems on the Heisenberg group or on the Engel group.

The paper has the following structure. In Section~\ref{sec-defs} we give some useful definitions and notation. Section~\ref{sec-construction-of-Casimir-functions} contains a construction of Casimir functions for free Carnot groups of any step for sufficiently big ranks in invariant form (Theorem~\ref{th-casimirs-r-s}). Theorem~\ref{th-casimirs-r-3} states that for step 3 we get a complete system of Casimir functions. A description of coadjoint orbits is in Section~\ref{sec-coadjoint-orbits}. We provide an algorithm for constructing Casimir functions in Section~\ref{sec-algorithm}, also this section is an illustrative material for previous ones.
We discuss some difficulties that appear for step $s \geqslant 4$ and give a full description of Casimir functions for free Carnot groups of step 4 (Section~\ref{sec-s4}).
Finally, we consider an application to optimal control theory in Section~\ref{sec-control}, were we investigate a behavior of extremal controls corresponding to two-dimensional coadjoint orbits.

The author would like to thank Yu.\,L.~Sachkov for discussion and comments that allowed to improve the presentation of results.
Also the author is grateful to A.\,A.~Ardentov for useful discussion on sub-Finsler geometry.

\section{\label{sec-defs}Definitions and notation}

In this section we recall some basic definitions and notation on free nilpotent Lie algebras, Lie groups and coadjoint representation (see, e.g.~\cite{bourbaki,kirillov}).

\begin{definition}
\label{def-free-Lie-algebra}
Let $\f_1$ be a vector space of dimension $r \geqslant 2$ over a field $\Kk$, $\cchar{\Kk} = 0$.
A Lie algebra $\f$ is called \emph{a free Lie algebra of rank $r$} if there exists a linear map $\varphi: \f_1 \rightarrow \f$ and for any Lie algebra $\mathfrak{l}$ and a linear map
$\psi : \f_1 \rightarrow \mathfrak{l}$ there exists a homomorphism of  Lie algebras $\pi : \f \rightarrow \mathfrak{l}$ such that $\psi = \pi \circ \varphi$.
\end{definition}

\begin{definition}
\label{def-free-nilpotent-Lie-algebra}
A Lie algebra $\g$ is called \emph{a free nilpotent Lie algebra of step $s$} if it is isomorphic to a quotient of a free Lie algebra $\f$ by the ideal generated by brackets of order greater than $s$.
\end{definition}

A free nilpotent Lie algebra of step $s$ and rank $r$ is unique and naturally graded:
$$
\g = \bigoplus\limits_{m=1}^{s}{\g_m}, \qquad [\g_i, \g_j] \subset \g_{i+j}, \qquad \g_k = 0 \quad \text{for} \quad k > s.
$$
Moreover, $\g$ is generated by $\g_1$, thus it is a Carnot algebra.

\begin{definition}
\label{def-Carnot-group}
A connected and simply connected Lie group is called \emph{a step $s$ free Carnot group} if the corresponding Lie algebra is a step $s$ free nilpotent Lie algebra.
\end{definition}

\begin{definition}
\label{def-Casimir-function}
Let $G$ be a Lie group and let $\g$ be its Lie algebra. \emph{A Casimir function} is a polynomial function on the Lie coalgebra $\g^*$ invariant under the coadjoint representation.
\end{definition}

Any element $\xi \in \g$ determines a linear function $\langle \xi, \,\cdot\, \rangle : \g^* \rightarrow \Kk$, we will denote it by the same letter $\xi$. The Poisson bracket is defined on linear functions by $\{\xi, \eta\} = [\xi, \eta]$, where $\xi, \eta \in \g$.
Let $S(\g)$ the symmetric algebra generated by $\g$, viewed as an algebra of polynomial functions on the coalgebra $\g^*$.
The Poisson bracket of arbitrary polynomial functions is defined with the help of Leibnitz identity.
It is well known that Casimir functions are central elements of the algebra $S(\g)$ with respect to the Poisson bracket.

Let $B$ be the Poisson bi-vector on $\g^*$, i.e., for any $p \in \g^*$ a skew-symmetric form $B_{p} : \g \times \g \rightarrow \Kk$ is such that for any
$f, g \in S(\g)$ the Poisson bracket is $\{f, g\}(p) = B_{p}(d_{p}f, d_{p}g)$.

\section{\label{sec-construction-of-Casimir-functions}Construction of Casimir functions}

Let now $\g$ be a free nilpotent Lie algebra of rank $r$ and step $s$ and let $G$ be the corresponding Carnot group.

\begin{remark}
\label{rem-Casimir-commutes-with-g1}
A function $f \in S(\g)$ is a Casimir function if and only if $\{\g_1, f\} = 0$. Indeed, the Poisson algebra $S(\g)$ is generated by linear functions on the coalgebra $\g^*$, but the Lie algebra $\g$ is generated by $\g_1$.
\end{remark}

\begin{remark}
\label{rem-linearCasimirs}
It is obvious that any element of $\g_s$ (an element of the maximal degree of the Lie algebra) is a linear Casimir function.
\end{remark}

Consider a map $i_{p} : S(\g) \rightarrow \Kk$ that takes a polynomial on $\g^*$ into its value at a point $p \in \g^*$.
Let $I : \g^* \rightarrow S(\g_s)$ be a linear function on $\g^*$ with values in $S(\g_s)$.
Denote $\widehat{I}(p) = i_{p} I(p)$ for $p \in \g^*$.

\begin{proposition}
\label{prop-linearCasimirs}
A function $\widehat{I} : \g^* \rightarrow \Kk$ is a Casimir function if and only if $i_{p} I(\cdot) \in \Ker{B^1_{p}}$ for all $p \in \g^*$, where
$$
B^1_{p} : \g \rightarrow \g_1^*, \qquad B^1_{p}(\xi) = B_{p}(\,\cdot\,, \xi)|_{\g_1}, \qquad \xi \in \g.
$$
\end{proposition}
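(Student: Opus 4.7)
My plan is to reduce the Casimir condition to commutation with $\g_1$ via Remark~\ref{rem-Casimir-commutes-with-g1} and then unwrap both sides in a basis of $\g$. First I would fix a basis $\{x_i\}$ of $\g$ (with $x_i$ simultaneously viewed as linear functions on $\g^*$) and exploit the linearity of $I : \g^* \to S(\g_s)$ to write
$$
I(p) = \sum_i x_i(p)\, Q_i, \qquad Q_i \in S(\g_s) \text{ constants.}
$$
Then $\widehat{I}(p) = i_p I(p) = \sum_i x_i(p)\, Q_i(p)$, so as an element of $S(\g)$ the function $\widehat{I}$ is simply $\sum_i x_i Q_i$.

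Next, for $\xi \in \g_1$ I would expand the Poisson bracket by Leibnitz:
$$
\{\xi, \widehat{I}\} \;=\; \sum_i \{\xi, x_i\}\, Q_i \;+\; \sum_i x_i\, \{\xi, Q_i\}.
$$
The second sum vanishes for free: $Q_i \in S(\g_s)$ is a polynomial in linear functions from $\g_s$, and $\{\xi, y\} = [\xi, y] \in \g_{s+1} = 0$ for every $y \in \g_s$, so $\{\xi, Q_i\} = 0$ by another application of Leibnitz. Evaluating at $p$ yields
$$
\{\xi, \widehat{I}\}(p) \;=\; \sum_i B_p(\xi, x_i)\, Q_i(p).
$$

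The final step is to recognize the right-hand side as $B^1_p$ applied to $i_p I(\cdot)$. Since $I(q) = \sum_i x_i(q)\, Q_i$, one has $i_p I(q) = \sum_i Q_i(p)\, x_i(q)$, so the linear functional $i_p I(\cdot)$ on $\g^*$ is precisely the element $\sum_i Q_i(p)\, x_i \in \g$. By the very definition of $B^1_p$, this element lies in $\Ker B^1_p$ iff $B_p(\xi, \sum_i Q_i(p) x_i) = 0$ for every $\xi \in \g_1$, which by the previous display is exactly $\{\xi, \widehat{I}\}(p) = 0$. Varying $p$ and $\xi$, and combining with Remark~\ref{rem-Casimir-commutes-with-g1}, yields both implications.

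I do not anticipate a genuine obstacle: the computation is a short bookkeeping exercise once one observes that the step-$s$ nilpotency $[\g_1, \g_s] = 0$ kills the second Leibnitz term and that $i_p I(\cdot)$ is literally the element $\sum_i Q_i(p) x_i$ of $\g$. The only point requiring a little care is the double role of $p$ in $\widehat{I}(p) = i_p I(p)$ (as the argument of $I$ and as the evaluation point), which is dissolved by the basis expansion above.
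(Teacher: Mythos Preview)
Your proof is correct and follows essentially the same approach as the paper's: both reduce to $\{\g_1,\widehat{I}\}=0$ via Remark~\ref{rem-Casimir-commutes-with-g1}, split the bracket into two pieces, kill the piece coming from $S(\g_s)$ using $[\g_1,\g_s]=0$, and identify the remaining piece with $B^1_p$ applied to $i_pI(\cdot)$. The only cosmetic difference is that the paper writes the decomposition invariantly as $d_p\widehat{I}=i_pI(\cdot)+d_pI(p)$ and then pairs with $B_p(\g_1,\cdot)$, whereas you unfold the same identity in a basis via the Leibnitz rule; your $\sum_i Q_i(p)\,x_i$ is exactly the paper's $i_pI(\cdot)$ and your vanishing term $\sum_i x_i\{\xi,Q_i\}$ is the paper's $B_p(\g_1,d_pI(p))=\{\g_1,I(p)\}\subset\{\g_1,S(\g_s)\}=0$.
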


\begin{proof}
First, note that
\begin{equation}
\label{eq-diff}
d_{p} \widehat{I} = i_{p} I(\cdot) + d_{p} I(p).
\end{equation}

Second, according to Remark~\ref{rem-Casimir-commutes-with-g1} consider
$$
\{\g_1, \widehat{I}\}(p) = B_{p}(\g_1, d_{p}\widehat{I}) = B_{p}(\g_1, i_{p}I(\cdot)) + B_{p}(\g_1, d_{p}I(p)),
$$
where the last term is equal to $\{\g_1, I(p)\} \subset \{\g_1, S(\g_s)\} = 0$ by Remark~\ref{rem-linearCasimirs}. So, $\{\g_1, \widehat{I}\} = 0$ if and only if for any $p \in \g^*$ we have $B_{p}(\g_1, i_{p}I(\cdot)) = 0$.
\end{proof}

\begin{proposition}
\label{prop-Kerb-is-linearCasimir}
Any element of $\Ker{B^1_{p}|_{\g_{s-1}}}$ can be realized as $i_{p}I(\cdot)$, where $I: \g^* \rightarrow S(\g_s)$ is a linear map.
\end{proposition}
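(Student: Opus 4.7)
The plan is, given $\eta \in \Ker{B^1_p|_{\g_{s-1}}}$, to construct an explicit linear map $I \colon \g^* \to S(\g_s)$ for which $i_p I(\cdot) = \eta$ under the canonical identification $(\g^*)^* \cong \g$.

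First I would unpack how $i_p I(\cdot)$ depends on $I$. A generic linear map $I \colon \g^* \to S(\g_s)$ is an element of $\g \otimes S(\g_s)$, so $I = \sum_\alpha \xi_\alpha \otimes J_\alpha$ with $\xi_\alpha \in \g$ and $J_\alpha \in S(\g_s)$, meaning $I(q) = \sum_\alpha \langle q, \xi_\alpha \rangle J_\alpha$. Evaluating at $p$ and reading the result as a linear functional of $q$, the element of $\g$ corresponding to $i_p I(\cdot)$ is
\[
i_p I(\cdot) \;=\; \sum_\alpha J_\alpha(p)\,\xi_\alpha \;\in\; \g.
\]
So the problem reduces to choosing $\xi_\alpha, J_\alpha$ so that this sum equals $\eta$.

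Second, I would exploit that $S(\g_s) = \bigoplus_{k \geq 0} S^k(\g_s)$ contains the ground field $\Kk = S^0(\g_s)$ in degree zero. The constant polynomial $1 \in S^0(\g_s) \subset S(\g_s)$ satisfies $1(p) = 1$ identically in $p$. Thus the one-term tensor
\[
I(q) \;:=\; \langle q, \eta \rangle \cdot 1
\]
is a legitimate linear map $\g^* \to S(\g_s)$ and, by the formula above, satisfies $i_p I(\cdot) = 1 \cdot \eta = \eta$, as required.

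I do not foresee any real obstacle: the construction works for any $\eta \in \g$, not only for those lying in $\Ker{B^1_p|_{\g_{s-1}}}$. The restriction to this kernel in the statement is dictated by Proposition~\ref{prop-linearCasimirs}, since only for such $\eta$ does the pointwise realization have a chance of gluing into a genuine Casimir function; it is not itself an obstacle to the construction. If one preferred a $\g_s$-valued $I$ rather than a constant one, an alternative would be to pick $e \in \g_s$ with $\langle p, e\rangle = 1$ and set $I(q) = \langle q, \eta\rangle \cdot e$, but this requires $p|_{\g_s}\neq 0$, whereas the constant choice works uniformly in $p$.
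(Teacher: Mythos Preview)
Your argument proves only a trivially weak reading of the statement. As you yourself observe, your construction works for \emph{any} $\eta\in\g$, which should signal that you have bypassed the actual content. Read in context (Proposition~\ref{prop-linearCasimirs} and Theorem~\ref{th-casimirs-r-s}), the map $I$ is supposed to be chosen \emph{independently of $p$} so that $i_{p}I(\cdot)\in\Ker B^{1}_{p}|_{\g_{s-1}}$ for every (generic) $p$; only then does $\widehat{I}(p)=i_{p}I(p)$ define a Casimir function via Proposition~\ref{prop-linearCasimirs}. Your choice $I(q)=\langle q,\eta\rangle\cdot 1$ gives $i_{q}I(\cdot)=\eta$ for all $q$, so $\widehat{I}$ is just the linear function $\eta\in\g_{s-1}$. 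That lies in $\Ker B^{1}_{q}$ for all $q$ only when $[\g_1,\eta]=0$, i.e.\ when $\eta$ is central --- which a generic element of $\g_{s-1}$ is not. Hence your $\widehat{I}$ is \emph{not} a Casimir function and cannot feed into Theorem~\ref{th-casimirs-r-s}.

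The point you are missing, and which the paper's proof uses, is that the entries of the matrix of $B^{1}_{p}|_{\g_{s-1}}$ are values at $p$ of linear functions from $\g_s$ (since $[\g_1,\g_{s-1}]\subset\g_s$). Therefore one can parametrize the kernel by Cramer--type minors: choosing a maximal nonvanishing minor and expanding by cofactors produces vectors in $\g_{s-1}$ whose components are degree-$\rk(B^{1}_{p}|_{\g_{s-1}})$ polynomials in the $\g_s$-variables. Each such vector is an element of $\g_{s-1}\otimes S(\g_s)$, i.e.\ a linear map $I:\g^*\to S(\g_s)$ in the required sense, and by construction $i_{p}I(\cdot)\in\Ker B^{1}_{p}|_{\g_{s-1}}$ for every $p$ where that minor is nonzero. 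This is exactly what Section~\ref{sec-algorithm} spells out. So the kernel hypothesis is not a side remark for later use: it is what makes a \emph{uniform} choice of $I$ possible, and your constant-coefficient shortcut discards precisely this uniformity.
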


\begin{proof}
For $\eta \in \g_{s-1}$ and $\xi \in \g_1$ we have $B^1_{p}(\eta)(\xi) = p([\xi, \eta])$. Since $[\g_1, \g_{s-1}] \subset \g_s$, the coefficients of $B^1_{p}|_{g_{s-1}}$ are equal to values of linear functions on $\g^*$ of the form $\g_s$ at a point $p$. It is clear that elements of $\Ker{B^1_{p}|_{\g_{s-1}}}$ can be viewed as vectors of  polynomials of variables from the space $\g_s$. These polynomials have degree $\rk{B^1_{p}|_{\g_{s-1}}}$. For details see Section~\ref{sec-algorithm}. This implies that any of these elements can be realized as $i_{p}I(\cdot)$,
where $I \in \g_{s-1} \otimes S(\g_s)$.
\end{proof}

How many such Casimir functions are there? The answer is given by Propositions~\ref{prop-exist-p-full-rank}, \ref{prop-b-is-surj}.

\begin{proposition}
\label{prop-exist-p-full-rank}
If $\dim{\g_{s-1}} \geqslant \dim{\g_1} = r$, then
there exists $p \in \g^*$ such that $\Image{B^1_p|_{g_{s-1}}} = \g_1^*$.
\end{proposition}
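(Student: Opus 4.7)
The plan is to pass to a dual formulation and then show via a dimension count that the ``bad'' set of $p$'s (for which the map is not surjective) is a proper subvariety of $\g_s^*$. First I observe that by linear-algebra duality, $\Image(B^1_p|_{\g_{s-1}}) = \g_1^*$ is equivalent to the injectivity of the transpose map
\begin{equation*}
\g_1 \to \g_{s-1}^*, \qquad \xi \mapsto p([\xi, \cdot])|_{\g_{s-1}}.
\end{equation*}
Because $[\g_1, \g_{s-1}] \subset \g_s$, this map depends only on $p|_{\g_s}$, so I search for $p \in \g_s^*$ outside the set
\begin{equation*}
D = \bigcup_{\xi \in \g_1 \setminus \{0\}} [\xi, \g_{s-1}]^\perp \subset \g_s^*.
\end{equation*}

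Next I establish the key estimate that $\dim [\xi, \g_{s-1}] \geq r$ for every nonzero $\xi \in \g_1$. Since the free nilpotent algebra $\g$ coincides with the free Lie algebra $\f$ on $\g_1$ in degrees $\leq s$ (the defining ideal is concentrated in degrees $>s$), the map $\ad_\xi : \g_{s-1} \to \g_s$ agrees with $\ad_\xi : \f_{s-1} \to \f_s$. Using that any linear automorphism of $\g_1$ extends uniquely to an automorphism of $\g$, I may assume $\xi = e_1$ is a generator. The classical centralizer theorem for free Lie algebras (proved via PBW: the centralizer of $e_1$ in the free associative algebra $U(\f)$ is $\Kk[e_1]$, which meets $\f$ precisely in $\Kk\,e_1$) then yields $\Ker \ad_{e_1}|_{\f_k} = 0$ for $k \geq 2$. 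Hence $\ad_\xi|_{\g_{s-1}}$ is injective for $s \geq 3$, and combined with the hypothesis $\dim \g_{s-1} \geq r$ the estimate follows.

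Finally I run the dimension count. The set $D$ is the image in $\g_s^*$ of the incidence variety $X = \{([\xi], p) \in \mathbb{P}(\g_1) \times \g_s^* : p|_{[\xi, \g_{s-1}]} = 0\}$ under projection to the second factor. By the key estimate, each fiber of $X$ over $\mathbb{P}(\g_1)$ has dimension $\dim \g_s - \dim [\xi, \g_{s-1}] \leq \dim \g_s - r$, whence $\dim D \leq \dim X \leq (r-1) + (\dim \g_s - r) = \dim \g_s - 1 < \dim \g_s^*$. Since $\Kk$ is infinite (as $\cchar \Kk = 0$), the Zariski-open complement $\g_s^* \setminus D$ contains $\Kk$-points; any such $p$, extended trivially to $\g^*$, is the desired point. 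The main obstacle is precisely the key estimate in the second paragraph: the injectivity of $\ad_\xi$ on $\g_{s-1}$ uses the freeness of $\g$ in an essential way and would fail for a general nilpotent Lie algebra.
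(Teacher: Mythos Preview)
Your argument is correct, but it follows a genuinely different route from the paper's. The paper gives an \emph{explicit} covector: it picks Hall basis elements $\eta_i = (\ad\xi_i)^{s-2}\xi_{i+1} \in \g_{s-1}$, observes that the $[\xi_i,\eta_i]$ are linearly independent in $\g_s$, extends to a basis, and defines $p$ to be $1$ on the $[\xi_i,\eta_i]$ and $0$ on the rest; then $B^1_p(\eta_i)(\xi_j)=\delta_{ij}$, so surjectivity is immediate. By contrast, your proof is a non-constructive existence argument: you dualize to injectivity of $\xi \mapsto p([\xi,\cdot])$, invoke the centralizer theorem in free Lie algebras to get $\dim[\xi,\g_{s-1}] = \dim\g_{s-1} \geq r$ for every nonzero $\xi\in\g_1$ (for $s\geq 3$), and then run a dimension count on the incidence variety in $\mathbb{P}(\g_1)\times\g_s^*$ to show the bad locus is a proper closed subset.

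What each approach buys: the paper's argument is elementary (only the Hall basis is needed) and produces a concrete witness. Your argument is heavier in input---the centralizer fact, while classical (it follows cleanly from Shirshov--Witt: two commuting elements generate an abelian, hence one-dimensional, free subalgebra), is less elementary than the Hall basis---but it pays off by proving more. Because $\mathbb{P}(\g_1)$ is proper, your bad set $D$ is Zariski-closed of dimension $\leq \dim\g_s - 1$, so you obtain not just one good $p$ but a Zariski-dense open set of them; this is precisely the content of the paper's next proposition (Proposition~\ref{prop-b-is-surj}), which there requires a separate density argument feeding off the explicit point. One small remark: your restriction to $s\geq 3$ is necessary (for $s=2$ the map $B^1_p|_{\g_1}$ is skew and fails to be surjective for odd $r$), and this restriction is also implicit in the paper's construction.
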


\begin{proof}
Take a basis $\xi_1, \dots, \xi_r \in \g_1$. The elements
$$
\eta_1 = (\ad{\xi_1})^{s-2}\xi_2, \quad \eta_2 = (\ad{\xi_2})^{s-2}\xi_3, \ \dots \ , \quad \eta_r = (\ad{\xi_r})^{s-2}\xi_1 \in \g_{s-1}
$$
are linearly independent because this is a part of the Hall basis of the free nilpotent Lie algebra (see, e.g.~\cite{serre}).
The same is true for the elements
\begin{equation}
\label{eq-basis-gs}
[\xi_1, \eta_1], \ [\xi_2, \eta_2], \ \dots \ , \ [\xi_r, \eta_r] \in \g_s.
\end{equation}
Expand this system to a basis of the space $\g_s$. Take $p \in \g^*$ such that it takes the values $1$ on elements of the type~\eqref{eq-basis-gs} and zero on other basis elements.
It is clear that $B^1_p(\eta_i)(\xi_j) = \delta_{ij}$ for $i=1,\dots,r$, where $\delta_{ij}$ is the Kronecker delta.
So, the map $B^1_p|_{\g_{s-1}}$ is surjective.
\end{proof}

\begin{proposition}
\label{prop-b-is-surj}
Assume that $\dim{\g_{s-1}} \geqslant \dim{\g_1} = r$. Then for almost all $p \in \g^*$ we have $\Image{B^1_{p}|_{\g_{s-1}}} = \g_1^*$.
\end{proposition}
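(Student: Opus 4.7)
The plan is to upgrade the existence statement from Proposition~\ref{prop-exist-p-full-rank} into a genericity statement by observing that surjectivity of $B^1_p|_{\g_{s-1}}$ is determined by the non-vanishing of a polynomial function of $p$.

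First I would fix bases $\xi_1,\dots,\xi_r$ of $\g_1$ and $\eta_1,\dots,\eta_N$ of $\g_{s-1}$ (here $N=\dim \g_{s-1}$), and represent $B^1_p|_{\g_{s-1}}$ by the $r\times N$ matrix $M(p)$ with entries $M_{ji}(p)=p([\xi_j,\eta_i])$. Since $[\g_1,\g_{s-1}]\subset \g_s$, each entry $M_{ji}$ is a linear function of $p$ (in fact, a linear combination of the coordinates of $p$ dual to a basis of $\g_s$). In particular, every $r\times r$ minor of $M(p)$ is a polynomial in the coordinates of $p\in\g^*$.

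Next I would note that $\Image B^1_p|_{\g_{s-1}}=\g_1^*$ is equivalent to $\rk M(p)=r$, which in turn is equivalent to the existence of at least one nonzero $r\times r$ minor of $M(p)$. By Proposition~\ref{prop-exist-p-full-rank}, the hypothesis $\dim\g_{s-1}\geqslant r$ guarantees the existence of at least one $p_0\in\g^*$ for which $\rk M(p_0)=r$; hence at least one of these $r\times r$ minors, call it $\Delta$, is not identically zero as a polynomial on $\g^*$.

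Finally, I would conclude: the locus $\{p\in\g^*: \Delta(p)=0\}$ is a proper algebraic subset of $\g^*$, so its complement is Zariski open and dense, and in particular has full Lebesgue measure when $\Kk=\R$. On this complement, $\rk B^1_p|_{\g_{s-1}}=r$, i.e., $B^1_p|_{\g_{s-1}}$ is surjective onto $\g_1^*$. There is no real obstacle here; the only point to verify carefully is the reduction of surjectivity to the non-vanishing of a single polynomial, which follows immediately from the linearity of $M(p)$ in $p$ and the existence of one good point supplied by the previous proposition.
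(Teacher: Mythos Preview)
Your proof is correct and follows essentially the same approach as the paper: both arguments observe that the entries of $B^1_p|_{\g_{s-1}}$ are linear (hence polynomial) in $p$, so that the full-rank condition is governed by polynomial minors, and then invoke Proposition~\ref{prop-exist-p-full-rank} to ensure these polynomials are not identically zero. The paper phrases this by contradiction (a polynomial vanishing on an open set vanishes identically), whereas you argue directly via a single nonvanishing minor; the content is the same.
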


\begin{proof}
Assume by contradiction that for $p$ from some open set in $\g^*$ we have $\Image{B^1_{p}|_{\g_{s-1}}} \neq \g_1^*$. This condition is polynomial in coefficients of the map
$B^1_{p}|_{\g_{s-1}}$ which are elements of the subspace $\g_s$. So, if these polynomials are zero on some open set, then they are identically zero in contradiction with Proposition~\ref{prop-exist-p-full-rank}.
\end{proof}

\begin{remark}
\label{rem-complicated-proof-on-full-rank}
Of course, the condition of full rank of a linear map is open. But the coefficients of the map $B^1_p|_{\g_{s-1}}$ are not arbitrary. The goal of Propositions~\ref{prop-exist-p-full-rank}, \ref{prop-b-is-surj} is to show that since our Lie algebra is free nilpotent, there are no linear relations between rows of the matrix $B^1_p|_{\g_{s-1}}$.
\end{remark}

\begin{proposition}
\label{prop-dim-inequality}
For free nilpotent Lie algebras of step $s$ the inequality $\dim{\g_{s-1}} \geqslant \dim{\g_1}$ is satisfied for all ranks, except a finite number of ranks.
\end{proposition}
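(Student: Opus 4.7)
The plan is to use Witt's dimension formula for homogeneous components of a free Lie algebra and show that the desired inequality amounts to a polynomial inequality in the rank $r$ that is eventually satisfied.

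First, recall Witt's formula: for a free Lie algebra of rank $r$,
$$
\dim \g_m = \frac{1}{m} \sum_{d \mid m} \mu(d)\, r^{m/d},
$$
where $\mu$ is the M\"obius function. In particular, for fixed $m$, $\dim\g_m$ is a polynomial in $r$ of degree $m$ with leading coefficient $1/m$ and all other exponents strictly smaller.

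Next, separate two cases. For $s = 2$ the inequality is trivial since $\g_{s-1} = \g_1$, so there are no exceptions. For $s \geqslant 3$, set $m = s-1 \geqslant 2$. Then
$$
\dim \g_{s-1} - \dim \g_1 = \frac{r^{s-1}}{s-1} + \text{(polynomial in $r$ of degree $<s-1$)} - r
$$
is a polynomial in $r$ of degree $s-1 \geqslant 2$ with positive leading coefficient $1/(s-1)$. Hence it is positive for all sufficiently large $r$, and the set of integer ranks $r$ at which it is nonpositive is finite.

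I would not expect any real obstacle here; the only mild nuisance is keeping track of the M\"obius signs in Witt's formula to confirm that no contribution from a smaller $r^{(s-1)/d}$ term can overturn the leading $r^{s-1}/(s-1)$ for large $r$, but this is immediate since every term of lower degree is dominated asymptotically. If desired, one can even give an explicit crude bound, e.g.\ $\dim \g_{s-1} \geqslant \frac{1}{s-1}\bigl(r^{s-1} - (s-2)\,r^{(s-1)/2}\bigr)$, which already exceeds $r$ for all $r \geqslant 2$ as soon as $s \geqslant 4$; the case $s=3$ is handled directly by $\dim \g_2 = r(r-1)/2 \geqslant r \iff r \geqslant 3$, recovering the exceptional rank-$2$ step-$3$ (Cartan) case mentioned in the introduction.
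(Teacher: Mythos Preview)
Your proof is correct and follows essentially the same approach as the paper: both invoke Witt's dimension formula $\dim\g_m = \tfrac{1}{m}\sum_{d\mid m}\mu(d)\,r^{m/d}$ and observe that $\dim\g_{s-1}-\dim\g_1$ is a polynomial in $r$ with positive leading coefficient, hence nonnegative for all but finitely many ranks. You add a bit more care (separating out $s=2$, giving explicit bounds, and noting the Cartan exception at $s=3$, $r=2$), but the argument is the same.
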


\begin{proof}
The formula for dimensions of graded components of a free nilpotent Lie algebra of rank $r$ reads as follows (see for example~\cite{serre}):
\begin{equation}
\label{eq-grad-component-dimension}
\dim{\g_m} = \frac{1}{m} \sum\limits_{d|m}{\mu(d)r^{\frac{m}{d}}}, \qquad m \leqslant s,
\end{equation}
where $\mu$ is the M\"{o}bius function.
Then the expression $\dim{\g_{s-1}} - \dim{\g_1}$ is a polynomial of the variable $r$ with the positive leading coefficient. So, it is greater than or equal to zero for sufficiently big $r$.
\end{proof}

\begin{theorem}
\label{th-casimirs-r-s}
For free Carnot groups of step $s$ of sufficiently big rank $r$ there are\\
\emph{(1)} $\dim{\g_s}$ linear Casimir functions,\\
\emph{(2)} $(\dim{\g_{s-1}} - r)$ Casimir functions that are linear on the level sets of the first ones.
The degrees of these functions are equal to $r+1$.
\end{theorem}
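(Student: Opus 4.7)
\noindent\emph{Proof plan.} Part~(1) is immediate from Remark~\ref{rem-linearCasimirs}: a basis of $\g_s$ provides $\dim\g_s$ independent linear Casimirs. For part~(2), the plan is to chain together the preceding propositions. The hypothesis ``sufficiently big rank'' is there precisely so that Proposition~\ref{prop-dim-inequality} yields $\dim\g_{s-1}\geqslant r$, which is exactly the hypothesis of Propositions~\ref{prop-exist-p-full-rank} and~\ref{prop-b-is-surj}. I would first invoke these to conclude that on a Zariski-open dense subset $U\subset\g^*$ the map $B^1_p|_{\g_{s-1}}$ is surjective onto $\g_1^*$, so by rank--nullity its kernel has dimension $\dim\g_{s-1}-r$ for every $p\in U$. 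Next, fix a generic $p_0\in U$ and choose a basis of $\Ker B^1_{p_0}|_{\g_{s-1}}$. By Proposition~\ref{prop-Kerb-is-linearCasimir} each basis vector is realized as $i_{p_0}I_k(\cdot)$ for a linear map $I_k\in\g_{s-1}\otimes S(\g_s)$; the identity $B^1_p(i_pI_k(\cdot))=0$ is polynomial in $p$ and holds on $U$, hence on all of $\g^*$. Proposition~\ref{prop-linearCasimirs} then delivers $\dim\g_{s-1}-r$ Casimir functions $\widehat{I}_k(p)=i_pI_k(p)$.

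To verify the remaining properties, write $I_k=\sum_j\zeta_{k,j}\otimes P_{k,j}$ with $\zeta_{k,j}\in\g_{s-1}$ and $P_{k,j}\in S(\g_s)$; then $\widehat{I}_k(p)=\sum_j\zeta_{k,j}(p)\,P_{k,j}(p_{\g_s})$. Restricting to a joint level set of the linear Casimirs from~(1) freezes the $\g_s$-coordinates, so every $P_{k,j}(p_{\g_s})$ becomes a constant and $\widehat{I}_k$ reduces to a linear function in the $\g_{s-1}$-coordinates of $p$, as claimed. For the degree, the cofactor construction inside the proof of Proposition~\ref{prop-Kerb-is-linearCasimir} (detailed in Section~\ref{sec-algorithm}) produces each $P_{k,j}$ as an $r\times r$ minor of $B^1_p|_{\g_{s-1}}$ whose entries are $\g_s$-coordinates of $p$, so $\deg P_{k,j}=r$ and hence $\deg\widehat{I}_k=r+1$. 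Functional independence of $\widehat{I}_1,\dots,\widehat{I}_{\dim\g_{s-1}-r}$ follows because the $\g_{s-1}$-component of $d\widehat{I}_k|_{p_0}\in\g$ equals $i_{p_0}I_k(\cdot)$, and these form the chosen basis of $\Ker B^1_{p_0}|_{\g_{s-1}}$.

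The main obstacle, I expect, is the sharp degree count: one must rule out that a polynomial identity among the cofactors secretly cancels the top-degree term on all of $\g^*$. This is handled by Proposition~\ref{prop-exist-p-full-rank}, which exhibits a point at which a distinguished $r\times r$ minor takes the value $1$, guaranteeing that the degree-$r$ component of the corresponding cofactor is not identically zero, so $\deg\widehat{I}_k=r+1$ generically.
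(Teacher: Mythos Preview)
Your proposal is correct and follows exactly the route the paper takes: the paper's own proof is the single sentence ``Immediately follows from Remark~\ref{rem-linearCasimirs} and Propositions~\ref{prop-linearCasimirs}, \ref{prop-Kerb-is-linearCasimir}, \ref{prop-b-is-surj}, \ref{prop-dim-inequality},'' and you have simply unpacked that chain, with the degree-$(r+1)$ verification deferred (as in the paper) to the minor construction of Section~\ref{sec-algorithm}. Your extra paragraphs on linearity on level sets, functional independence, and non-vanishing of the top-degree cofactor are more explicit than anything the paper writes out, but they are accurate and entirely in the same spirit.
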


\begin{proof}
Immediately follows from Remark~\ref{rem-linearCasimirs} and Propositions~\ref{prop-linearCasimirs}, \ref{prop-Kerb-is-linearCasimir}, \ref{prop-b-is-surj}, \ref{prop-dim-inequality}.
\end{proof}

\begin{theorem}
\label{th-casimirs-r-3}
For a three-step free Carnot group of rank $r \geqslant 3$ there are $(r^3 - r)/3$ linear Casimir functions and $(r^2-3r)/2$ Casimir functions that are linear on the level sets of the first ones \emph{(}the degrees of these functions are equal to $r + 1$\emph{)}. This is a complete system of Casimir functions.
\end{theorem}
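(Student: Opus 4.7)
The plan is to obtain parts (1)--(3) by specializing Theorem~\ref{th-casimirs-r-s} to $s=3$, and to establish completeness by computing the generic kernel of the Poisson bivector $B_p$ directly from the Carnot grading.

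First, for $s=3$ the hypothesis of Theorem~\ref{th-casimirs-r-s} reads $\dim \g_2 \geqslant r$, i.e.\ $r(r-1)/2 \geqslant r$, which is equivalent to $r \geqslant 3$. Formula~\eqref{eq-grad-component-dimension} then gives $\dim \g_3 = (r^3-r)/3$ and $\dim \g_2 - r = (r^2-3r)/2$, and part~(2) of that theorem records the degree $r+1$. This yields the two families of Casimir functions claimed.

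For completeness, the number of functionally independent Casimir functions equals the generic codimension of coadjoint orbits, namely $\dim \Ker B_p$ for $p$ in a Zariski-open subset of $\g^*$. I would work on the open set provided by Proposition~\ref{prop-b-is-surj}, where $B^1_p|_{\g_2}:\g_2\to\g_1^*$ is surjective, so its transpose $\g_1\to\g_2^*$, $\xi\mapsto p([\xi,\,\cdot\,])|_{\g_2}$, is injective. Decompose any $v\in\Ker B_p$ as $\xi+\eta+\zeta$ with $\xi\in\g_1$, $\eta\in\g_2$, $\zeta\in\g_3$. Pairing against $\g_2$ reduces to $p([\xi,\g_2])=0$ (because $[\g_2,\g_2]=[\g_2,\g_3]=0$), which forces $\xi=0$ by injectivity. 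Pairing the remainder $\eta+\zeta$ against $\g_1$ then yields $\eta\in\Ker B^1_p|_{\g_2}$, while $\zeta$ is unconstrained since $\g_3$ is central. Therefore $\Ker B_p = \Ker(B^1_p|_{\g_2}) \oplus \g_3$, of dimension $(\dim\g_2-r)+\dim\g_3 = (r^2-3r)/2+(r^3-r)/3$, which matches the count of constructed Casimirs.

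Functional independence at generic $p$ follows from formula~\eqref{eq-diff}: the linear Casimirs contribute a basis of $\g_3$, while the $\g_2$-component of the differential of each degree-$(r+1)$ Casimir is the prescribed element of $\Ker B^1_p|_{\g_2}$ provided by Proposition~\ref{prop-Kerb-is-linearCasimir}; together these span $\Ker B_p$. I expect the kernel computation to be the main point of the argument: it uses Proposition~\ref{prop-b-is-surj} essentially and exploits that for $s=3$ all nontrivial brackets in the Carnot grading pass through the single pairing $\g_1\times\g_2\to\g_3$, which is precisely what fails for $s\geqslant 4$ (cf.~Section~\ref{sec-s4}).
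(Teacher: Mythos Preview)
Your proposal is correct and follows essentially the same route as the paper: specialize Theorem~\ref{th-casimirs-r-s} to $s=3$ for the counts, then compute $\Ker B_p$ at generic $p$ by decomposing along the grading and using the surjectivity from Proposition~\ref{prop-b-is-surj} (the paper phrases this as $B^{21}_p=-(B^{12}_p)^*$ having trivial kernel, which is exactly your injectivity of the transpose). Your explicit remark on functional independence via~\eqref{eq-diff} is a welcome clarification of what the paper leaves implicit.
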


\begin{proof}
The first part of the theorem follows from Theorem~\ref{th-casimirs-r-s} and formula~\eqref{eq-grad-component-dimension}. Indeed, the expression $\dim{\g_2} - r = \frac{1}{2}(r^2 - 3r)$ is nonnegative if $r \geqslant 3$.

Let us prove now that we get a complete system of Casimir functions.
Let $B^{12}_{p} = B^1_{p}|_{\g_2}$ and $B^{21}_{p} : \g_1 \rightarrow \g_2^*$ is such that for $\xi \in \g_1$ we have $B^{21}_{p}(\xi) = B_{p}(\,\cdot\,, \xi)|_{\g_{2}}$.
If $\lambda_1 + \lambda_2 + \lambda_3 \in \Ker{B_p}$ and $\lambda_1 \in \g_1$, $\lambda_2 \in \g_2$, $\lambda_3 \in \g_3$, then
$\lambda_1 \in \Ker{B^{21}_p}$.
But the form $B_{p}$ is skew-symmetric, this means that $B^{21}_{p} = -(B^{12}_{p})^*$. From Proposition~\ref{prop-b-is-surj} we know that $\Image{B^{12}_{p}} = \g_1^*$ for almost all $p$, then $\Ker{(B^{12}_{p})^*} = 0$.
So, from $\lambda_1 = 0$ and $\lambda_3 \in \g_3 \subset \Ker{B_p}$ follows $\lambda_2 \in \Ker{B^{12}_p}$. We obtain $\dim{\Ker{B_{p}}} = \dim{\Ker{B^{12}_{p}}} + \dim{\g_3}$.

It is well known, e.g.~\cite{kirillov}, that coadjoint orbits are symplectic leaves of the Poisson structure on $\g^*$. So, $\dim{\Ker{B_{p}}}$ is equal to a number of Casimir functions for general $p \in \g^*$. But we had already found this amount of independent Casimir functions.
\end{proof}

\begin{example}
\label{ex-Cartan-group}
There is a special case $r = 2$. Note that the corresponding three-step free Carnot group is called \emph{the Cartan group}. The Lie algebra is generated by $\xi_1, \xi_2$ and has the following structure:
$$
\xi_{12} = [\xi_1, \xi_2], \qquad \xi_{112} = [\xi_1, \xi_{12}], \qquad \xi_{212} = [\xi_2, \xi_{12}],
$$
$$
\g_1 = \sspan{\{\xi_1, \xi_2\}}, \qquad \g_2 = \sspan{\{\xi_{12}\}}, \qquad \g_3 = \sspan{\{\xi_{112}, \xi_{212}\}}.
$$
It is easy to see that in addition to linear Casimir functions $\xi_{112}, \xi_{212}$ we have a quadratic Casimir function
\begin{equation*}
\textstyle
{{1}\over{2}}\xi_{12}^2 + \xi_1\xi_{212} - \xi_2\xi_{112},
\end{equation*}
see~\cite{sachkov-didona}.
This is a complete system of Casimir functions since $\rk{B_p} = 3$ for almost all $p \in \g^*$.

If $\xi_{112} = \xi_{212} = 0$, then corresponding coadjoint orbits are affine planes $\xi_{12} = \const \neq 0$.

If $\xi_{112} = \xi_{212} = \xi_{12} = 0$, then corresponding coadjoint orbits are points.
\end{example}

\section{\label{sec-coadjoint-orbits}Coadjoint orbits}

In this section we give a description of coadjoint orbits for three-step free Carnot groups.
General coadjoint orbits are joint level sets of the Casimir functions given in Section~\ref{sec-construction-of-Casimir-functions}.
Here we describe additional functions that determine lower-dimensional coadjoint orbits.
Below we consider $p \in \g^*$ such that $\Image{B^{12}_{p}} \neq \g_1^*$, in other words the annihilator of this image is nontrivial:
$0 \neq (\Image{B^{12}_{p}})^{\circ} \subset \g_1$.
Let $\W \ni p$ be a joint level set of all Casimir functions.

\begin{proposition}
\label{prop-same-h-for-coadjoint-orbit}
The subspace $(\Image{B^{12}_{p}})^{\circ} \subset \g$ depends only on the set $\W \subset \g^*$ but not on an element $p \in \g^*$.
\end{proposition}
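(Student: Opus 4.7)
The plan is to observe that the map $B^{12}_p$ itself is already constant on $\W$ (not just its image or the annihilator of its image), which will trivially yield the conclusion. So the key point to establish is that the matrix entries of $B^{12}_p$ depend only on the restriction $p|_{\g_s}$, and that this restriction is determined by the values of the linear Casimir functions from $\g_s$.

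First I would unpack the definition of $B^{12}_p$ in coordinates. Fix bases $\{\xi_i\}$ of $\g_1$ and $\{\eta_j\}$ of $\g_2$; then the entries of the matrix of $B^{12}_p : \g_2 \to \g_1^*$ are
\begin{equation*}
B^{12}_p(\eta_j)(\xi_i) = p\bigl([\xi_i,\eta_j]\bigr).
\end{equation*}
Since we are in a three-step Lie algebra, $[\xi_i,\eta_j] \in [\g_1,\g_2] \subset \g_3 = \g_s$. Hence every entry of $B^{12}_p$ is the evaluation at $p$ of a linear function on $\g^*$ coming from an element of $\g_s$.

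Next I would invoke Remark~\ref{rem-linearCasimirs}: every element of $\g_s$ is a linear Casimir function. By definition, $\W$ is a joint level set of all Casimir functions, so every such linear function is constant on $\W$. Consequently, for any $p,p' \in \W$, all entries of $B^{12}_p$ and $B^{12}_{p'}$ coincide, so $B^{12}_p = B^{12}_{p'}$ as linear maps. In particular, $\Image B^{12}_p \subset \g_1^*$ is the same subspace for every $p \in \W$, and therefore its annihilator $(\Image B^{12}_p)^{\circ} \subset \g_1$ also depends only on $\W$.

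There is no real obstacle here; the argument is essentially a bookkeeping observation about where the brackets $[\g_1,\g_2]$ land and why this degree matches the degree of the linear Casimirs. The proposition is, in some sense, the reason why the three-step case is so clean: the same structural fact would fail in step $s \geqslant 4$ because then $B^{12}_p$ could a priori involve components of $p$ on intermediate graded pieces that are not Casimir-constant on $\W$.
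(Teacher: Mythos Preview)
Your proof is correct and follows exactly the same approach as the paper: you observe that the entries of $B^{12}_p$ are values at $p$ of linear functions from $\g_3$, and these are Casimir functions, hence constant on $\W$. The paper compresses this into a single sentence, while you spell out the coordinate computation and the appeal to Remark~\ref{rem-linearCasimirs}, but the substance is identical.
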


\begin{proof}
The linear map $B^{12}_{p}$ depends only on functions from the subspace $\g_3$, that are constant on the set $\W$.
\end{proof}

Denote $\h_1 = (\Image{B^{12}_{p}})^{\circ} \subset \g_1$ and
$B^{11}_{p} = B_{p}|_{\g_1}$.

\begin{proposition}
\label{prop-preimageB12}
If $\gamma \in \kk = \Ker{B^{11}_p|_{\h_1}}$, then $B^{11}_p(\gamma) \in \Image{B^{12}_p}$.
\end{proposition}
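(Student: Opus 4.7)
The plan is to prove this by a double-annihilator argument that directly relates $\Image B^{12}_p$ to the subspace $\h_1$ appearing in the hypothesis.

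First I would unpack the hypothesis. By the natural interpretation, $\Ker{B^{11}_p|_{\h_1}}$ is the radical of the skew-symmetric form $B^{11}_p$ restricted to $\h_1 \times \h_1$; so $\gamma \in \kk$ means $\gamma \in \h_1$ and $B_p(\xi, \gamma) = p([\xi, \gamma]) = 0$ for every $\xi \in \h_1$. In the linear-map formulation, this says precisely that the functional $B^{11}_p(\gamma) \in \g_1^*$ vanishes on $\h_1$, i.e.\ $B^{11}_p(\gamma) \in \h_1^{\circ}$.

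Second, I would identify $\h_1^{\circ}$ with $\Image B^{12}_p$. By the very definition $\h_1 = (\Image B^{12}_p)^{\circ}$, and since $\g_1$ is finite-dimensional, the double-annihilator theorem yields $\h_1^{\circ} = ((\Image B^{12}_p)^{\circ})^{\circ} = \Image B^{12}_p$. Combining the two steps, $B^{11}_p(\gamma) \in \h_1^{\circ} = \Image B^{12}_p$, which is the conclusion.

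The only point that needs care is the interpretation of the symbol $\Ker{B^{11}_p|_{\h_1}}$: if one read it as the kernel of $B^{11}_p$ viewed as a map $\h_1 \to \g_1^*$ (i.e., restricting only in the argument slot), then the statement becomes vacuous since $B^{11}_p(\gamma)$ would already equal $0$. So the interpretation must be the radical of the restricted skew form, which is natural in view of the skew-symmetry of $B_p$ and consistent with the way kernels of two-forms are used elsewhere in the paper. Apart from this conventional clarification, no nontrivial obstacle is expected; the proposition is essentially a finite-dimensional linear-algebra identity once the annihilator relationship is spotted.
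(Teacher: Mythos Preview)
Your proposal is correct and follows essentially the same route as the paper: the paper's one-line argument is precisely that $B^{11}_p(\h_1,\gamma)=0$ forces $B^{11}_p(\gamma)\in\h_1^{\circ}=\Image{B^{12}_p}$, which is exactly your double-annihilator computation. Your remark on interpreting $\Ker{B^{11}_p|_{\h_1}}$ as the radical of the restricted skew form is the correct reading and is consistent with how the paper uses it (e.g., in the proof of Theorem~\ref{th-coadjoint-orbits} and Remark~\ref{rem-dim-of-coadjoint-orbits}).
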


\begin{proof}
We have $B^{11}_p(\h_1, \gamma) = 0$, this means that $B^{11}_p(\gamma) \in \h_1^{\circ} = \Image{B^{12}_p}$.
\end{proof}

\begin{proposition}
\label{prop-D-is-symmetric}
Consider the subspace $[\gamma, \g_1] \subset \g_2$ and the map $D = B^{12}_p|_{[\gamma, \g_1]}$.
The map $D(\ad{\gamma}) : \g_1 \rightarrow \g_1^*$ is self-adjoint for any $p \in \W$.
\end{proposition}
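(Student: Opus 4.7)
The plan is to unwind the definitions, apply the Jacobi identity, and then use crucially that $\gamma$ lies in $\h_1 = (\Image B^{12}_p)^{\circ}$.

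First I would translate the claim into a concrete identity. For $\xi, \zeta \in \g_1$, the pairing $\bigl(D(\ad \gamma)(\xi)\bigr)(\zeta)$ equals $B^{12}_p([\gamma,\xi])(\zeta) = B_p(\zeta,[\gamma,\xi]) = p\bigl([\zeta,[\gamma,\xi]]\bigr)$, using only the definitions of $B_p$ and of $B^{12}_p$. Thus self-adjointness of $D(\ad \gamma)$ amounts to the symmetry relation
\begin{equation*}
p\bigl([\zeta,[\gamma,\xi]]\bigr) = p\bigl([\xi,[\gamma,\zeta]]\bigr) \qquad \text{for all } \xi,\zeta \in \g_1.
\end{equation*}

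Next I would rewrite the left-hand side by the Jacobi identity:
\begin{equation*}
[\zeta,[\gamma,\xi]] = -[\gamma,[\xi,\zeta]] + [\xi,[\gamma,\zeta]].
\end{equation*}
Applying $p$ turns the desired symmetry into the single condition
\begin{equation*}
p\bigl([\gamma,[\xi,\zeta]]\bigr) = 0 \qquad \text{for all } \xi,\zeta \in \g_1.
\end{equation*}

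The final step is to observe that this is exactly the annihilator condition defining $\h_1$. Indeed, $[\xi,\zeta] \in \g_2$, so $p\bigl([\gamma,[\xi,\zeta]]\bigr) = B^{12}_p([\xi,\zeta])(\gamma)$, and this vanishes because $\gamma \in \h_1 = (\Image B^{12}_p)^{\circ}$. I do not need to use that $\gamma \in \kk$ (the extra condition $B^{11}_p(\h_1,\gamma)=0$); the self-adjointness follows already from $\gamma \in \h_1$ and the Jacobi identity, so the membership in $\kk$ will surface only in later applications of the map $D(\ad \gamma)$.

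The only mildly delicate point is the bookkeeping of signs and of which argument of $B_p$ plays which role in the definition of $B^{12}_p$; once that is straight, the proof is a one-line Jacobi computation followed by invoking the definition of $\h_1$. There is no genuine obstacle.
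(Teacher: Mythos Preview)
Your proof is correct and essentially identical to the paper's: both compute $B^{12}_p(\xi,[\gamma,\zeta])$ versus $B^{12}_p(\zeta,[\gamma,\xi])$, apply the Jacobi identity, and kill the cross term $p([\gamma,[\xi,\zeta]])$ using $\gamma \in \h_1 = (\Image B^{12}_p)^{\circ}$. Your observation that only $\gamma \in \h_1$ (not $\gamma \in \kk$) is needed here is accurate and matches what the paper's proof actually uses.
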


\begin{proof}
Indeed, for any $\xi, \zeta \in \g_1$ we have
$$
B^{12}_p(\xi, [\gamma, \zeta]) = \{\xi, \{\gamma, \zeta\}\}(p) = \{\{\xi, \gamma\}, \zeta\}(p) + \{\gamma, \{\xi, \zeta\}\}(p) = B^{12}_p(\zeta, [\gamma, \xi]),
$$
because $\{\gamma, \{\xi, \zeta\}\}|_{\W} = 0$ since $\gamma \in \h_1 = (\Image{B^{12}_p})^{\circ}$.
\end{proof}

\begin{proposition}
\label{prop-bilinear-part}
Assume that $D$ is non-degenerate.
Then for any $\gamma \in \kk$ there exists $\eta \in (B^{12}_p)^{-1}B^{11}_p(\gamma) \subset \g_2$ such that
$\eta = 2i_pA(\, \cdot \, , p)$, where
$A : \g^* \times \g^* \rightarrow S(\g_3)$ is a symmetric bilinear form.
\end{proposition}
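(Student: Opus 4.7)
The plan is to produce $\eta$ by inverting the self-adjoint operator $D \circ \ad{\gamma}$ on a suitable complement of $\Kk\gamma \subset \g_1$, then to repackage the construction as coming from the differential of a symmetric quadratic form with coefficients in $S(\g_3)$.

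First, I would observe that $\Kk\gamma \subset \Ker(D \circ \ad{\gamma})$ because $[\gamma,\gamma] = 0$, and dually $\Image(D \circ \ad{\gamma}) \subset \gamma^{\circ}$ by the self-adjointness of Proposition~\ref{prop-D-is-symmetric}. Pick any complement $V$ of $\Kk\gamma$ in $\g_1$: the non-degeneracy of $D$ together with the injectivity of $\ad{\gamma}|_V$ imply that $T := (D \circ \ad{\gamma})|_V$ is a symmetric linear isomorphism $V \to \gamma^{\circ}$. Since $B^{11}_p(\gamma)(\gamma) = p([\gamma,\gamma]) = 0$, we have $B^{11}_p(\gamma) \in \gamma^{\circ}$, so there is a unique $\xi \in V$ with $T(\xi) = B^{11}_p(\gamma)$.

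Second, I would set $\eta := [\gamma, \xi] \in [\gamma, \g_1] \subset \g_2$. By construction
\[
B^{12}_p(\eta) = D(\ad{\gamma}(\xi)) = T(\xi) = B^{11}_p(\gamma),
\]
so $\eta$ lies in the prescribed coset $(B^{12}_p)^{-1} B^{11}_p(\gamma)$.

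Third, to exhibit $\eta$ in the form $2 i_p A(\cdot, p)$, I would consider the scalar quantity
\[
\widehat{A}(p) := \tfrac{1}{2}\,\bigl\langle T^{-1} B^{11}_p(\gamma),\, B^{11}_p(\gamma)\bigr\rangle,
\]
which is a symmetric quadratic form in $p|_{\g_2}$ because $T^{-1}$ is self-adjoint, with matrix entries rational in $p|_{\g_3}$. A direct coordinate computation (expanding $\eta = [\gamma, \xi]$ in a basis of $\g_2$ and differentiating $\widehat{A}$ with respect to $\g_2$-coordinates, using the symmetry of $T^{-1}$) shows that the $\g_2$-gradient of $\widehat{A}$ equals $\eta$ up to sign. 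Writing $\widehat{A}(p) = i_p A(p,p)$ for a symmetric bilinear $A : \g^* \times \g^* \to S(\g_3)$ and invoking the standard identity $d_p[A(p,p)]|_{\g_2} = 2\,A(\cdot,p)|_{\g_2}$ yields $\eta = 2 i_p A(\cdot, p)$.

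The main obstacle is the denominator $\det T$ arising from the inversion of $T$: a priori $\widehat{A}$ is rational, not polynomial, in $p|_{\g_3}$, while the statement demands that $A$ take values in the polynomial algebra $S(\g_3)$. I expect to resolve this by exploiting the freedom to modify $\eta$ within its coset by a polynomial element of $\Ker B^{12}_p$ furnished by Proposition~\ref{prop-Kerb-is-linearCasimir}; such a correction should be chosen precisely to cancel $\det T$ and return a genuinely $S(\g_3)$-valued symmetric bilinear form $A$.
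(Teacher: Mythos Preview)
Your construction of $\eta$ is essentially the paper's: the paper sets $\eta = D^{-1}B^{11}_p(\gamma)$ directly, while you reparametrize via $\xi = T^{-1}B^{11}_p(\gamma)$ and then $\eta = [\gamma,\xi]$; these agree since $\ad\gamma$ identifies a complement of $\Kk\gamma$ with $[\gamma,\g_1]$. The symmetry argument is also the same, resting on Proposition~\ref{prop-D-is-symmetric}.

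The genuine gap is your handling of the denominator. Your $\eta$, as a function of $p$, has the form (polynomial in $p|_{\g_2\oplus\g_3}$)$/\det T$, with $\det T \in S(\g_3)$. You propose to clear this denominator by adding to $\eta$ a polynomial element of $\Ker B^{12}_p$ coming from Proposition~\ref{prop-Kerb-is-linearCasimir}. This cannot work: a sum of a non-polynomial rational function and a polynomial is never polynomial. No choice of kernel correction will remove $\det T$ from the denominator, and in any case adding a kernel element outside $[\gamma,\g_1]$ would also destroy the symmetry you just established.

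The paper's resolution is different and much simpler: it rescales $\gamma$ itself by $\det D$. Since $B^{11}_p$ is linear in $\gamma$, this replaces $D^{-1}$ by $(\det D)\,D^{-1} = \adj D$, whose entries lie in $S(\g_3)$; the associated bilinear form $2A(p,q) = -q\,D^{-1}(\ad^*\gamma)\,p$ then becomes genuinely $S(\g_3)$-valued. This rescaling is admissible because in the subsequent use (Theorem~\ref{th-coadjoint-orbits}) both $\widehat{I}_\gamma$ and $\widehat{Q}_\gamma$ acquire the same factor, and the paper explicitly records that after this step $\gamma$ is viewed as an element of $\g_1 \otimes S(\g_3)$. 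Replace your final paragraph with this rescaling argument and the proof goes through.
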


\begin{proof}
We can take $\eta = D^{-1}B^{11}_p(\gamma)$.
By definition of $B^{11}_p$ we have $\eta = -D^{-1}(\ad^*{\gamma})p$.
So, the bilinear form $A$ on the coalgebra $\g^*$ given by the formula: $2A(p, q)= -qD^{-1}(\ad^*{\gamma})p$ for $p, q \in \g^*$.
Let us show that this bilinear form is symmetric.
First, since our Lie algebra is free nilpotent, the map $\ad{\gamma} : \g_1 \rightarrow [\gamma, \g_1]$ is an isomorphism of vector subspaces.
Second, by Proposition~\ref{prop-D-is-symmetric} the map $D(\ad{\gamma})$ is self-adjoint, then the map $(\ad^*{\gamma})^{-1}D$ is self-adjoint.
So, the opposite map $D^{-1}(\ad^*{\gamma})$ is self-adjoint as well.

Multiplying $\gamma$ by $\det{D}$ we obtain that the coefficients of $D^{-1}$ are polynomials of variables from the space $\g_3$.
\end{proof}

\begin{proposition}
\label{prop-eta-gives-quadratic}
For any $\gamma \in \kk$ there exists $\eta \in (B^{12}_p)^{-1}B^{11}_p(\gamma) \subset \g_2$ such that
$\eta = 2i_pA(\, \cdot \, , p) + i_pL(\cdot)$, where
$A : \g^* \times \g^* \rightarrow S(\g_3)$ is a symmetric bilinear form and
$L : \g^* \rightarrow S(\g^3 \oplus \Ker{D})$ is a linear map.
\end{proposition}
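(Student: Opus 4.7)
The plan is to adapt the proof of Proposition~\ref{prop-bilinear-part} to a possibly degenerate $D$. In the non-degenerate case we set $\eta=D^{-1}B^{11}_p(\gamma)\in[\gamma,\g_1]$; when $\Ker D\neq 0$ the global inverse $D^{-1}$ does not exist, so we replace it by a partial inverse on a complement of $\Ker D$ and absorb the extra freedom into a linear correction involving $\Ker D$-variables.

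Fix a direct-sum decomposition $[\gamma,\g_1]=V\oplus\Ker D$, so that $D|_V:V\to\Image D$ is an isomorphism and the entries of $(D|_V)^{-1}$ become polynomials in $\g_3$-variables after multiplication by $\det(D|_V)$. By Proposition~\ref{prop-preimageB12}, $B^{11}_p(\gamma)\in\Image B^{12}_p$; split $B^{11}_p(\gamma)=\phi_V+\phi_W$ with $\phi_V\in\Image D$ and $\phi_W$ in a chosen complement of $\Image D$ inside $\Image B^{12}_p$. Put $\eta_V=(D|_V)^{-1}\phi_V\in V$: the self-adjointness of Proposition~\ref{prop-D-is-symmetric} restricted to the pair $(V,(\ad\gamma)^{-1}V)$ produces, exactly as in the proof of Proposition~\ref{prop-bilinear-part}, a symmetric bilinear form $A:\g^*\times\g^*\to S(\g_3)$ with $\eta_V$ realizing the contribution $2i_pA(\,\cdot\,,p)$.

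For the residual $\phi_W$, choose a complement $W$ of $[\gamma,\g_1]$ inside $\g_2$ and find $\eta_W\in W$ with $B^{12}_p(\eta_W)=\phi_W$ by Cramer's rule on a non-degenerate submatrix of $B^{12}_p|_W$; the resulting $\eta_W$ is linear in $p|_{\g_2}$ with coefficients polynomial in $p|_{\g_3}$. One is still free to add any $\eta_K\in\Ker D\subset\Ker B^{12}_p$ without altering $B^{12}_p(\eta)$; parametrizing this freedom in a basis of $\Ker D$ introduces coefficients in $S(\Ker D)$. The combined contribution $\eta_W+\eta_K$ is therefore linear in $p$ with coefficients in $S(\g_3\oplus\Ker D)$ and supplies the correction term $i_pL(\cdot)$ for a linear map $L:\g^*\to S(\g_3\oplus\Ker D)$, giving the required decomposition $\eta=\eta_V+\eta_W+\eta_K=2i_pA(\,\cdot\,,p)+i_pL(\cdot)$.

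The main obstacle is verifying that the self-adjointness argument of Proposition~\ref{prop-D-is-symmetric} genuinely restricts to the complement $V$: it may be necessary to choose $V$ compatibly with $\ad\gamma$, or to show that any antisymmetric residue of $D|_V(\ad\gamma|_{(\ad\gamma)^{-1}V})$ can be transferred into the $L$-correction without spoiling the symmetry of $A$. A secondary technical point is to confirm that, after clearing denominators, the coefficients of $L$ lie precisely in $S(\g_3\oplus\Ker D)$ rather than in a larger polynomial ring.
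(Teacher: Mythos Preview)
Your overall plan matches the paper's: split $B^{11}_p(\gamma)=b_1+b_2$ with $b_1\in\Image D$, solve $D\eta_1=b_1$ and treat $\eta_1$ via Proposition~\ref{prop-bilinear-part}, then find $\eta_2\in(B^{12}_p)^{-1}(b_2)$ separately. The gap is in the second step, and it is not secondary.

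The $\Ker D$ variables in the target $S(\g_3\oplus\Ker D)$ of $L$ do \emph{not} come from the freedom to add an arbitrary $\eta_K\in\Ker D$. That freedom only shifts the $\Ker D$-components of $\eta$ by constants; it cannot alter the $p$-dependence of the remaining components of your $\eta_W$, and in particular it cannot convert a coefficient lying in $[\gamma,\g_1]\subset\g_2$ into one lying in $\Ker D$. What must actually be shown is that the components of $\phi_W$ themselves, regarded as linear functions of $p$, already lie in $\Ker D\subset\g_2$. The paper carries out precisely this computation: writing $b_2(\xi)=B^{11}_p(\xi,\gamma)-B^{12}_p(\xi,\eta_1)$ for $\xi\in\g_1$, one checks that it vanishes when $\xi\in\h_1$ (because $\gamma\in\kk$) and when $\xi\in\mathfrak{n}$ (by construction of $\mathfrak{n}^\circ$), while for $\xi\in(\Image D)^\circ$ it equals $p([\xi,\gamma])$ with $[\xi,\gamma]\in[(\Image D)^\circ,\gamma]=\Ker D$. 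The identity $[(\Image D)^\circ,\gamma]=\Ker D$ is exactly the self-adjointness of $D(\ad\gamma)$ from Proposition~\ref{prop-D-is-symmetric}. Without this step your argument only yields $L:\g^*\to S(\g_3\oplus[\gamma,\g_1])$, strictly weaker than the stated conclusion.

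So the issue you flag as a ``secondary technical point'' is in fact the heart of the proof, and the $\eta_K$ device should be discarded. Your ``main obstacle'' about restricting self-adjointness to $V$ is a fair worry; the paper also simply refers back to Proposition~\ref{prop-bilinear-part} here, and it is the same identity $[(\Image D)^\circ,\gamma]=\Ker D$ that makes $\ad\gamma$ carry a complement of $(\Image D)^\circ$ in $\g_1$ onto a complement of $\Ker D$ in $[\gamma,\g_1]$, so the symmetry of the full map $D(\ad\gamma)$ descends.
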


\begin{proof}
Take subspace $\mathfrak{n}^{\circ} \subset \Image{B^{12}_p} \subset \g_1^*$ such that $\Image{B^{12}_p} = \Image{D} \oplus \mathfrak{n}^{\circ}$.
By Proposition~\ref{prop-preimageB12} $b = B^{11}_p(\gamma) \in \Image{B^{12}_p}$.
Let $b = b_1 + b_2$, where $b_1 \in \Image{D}$ and $b_2 \in  \mathfrak{n}^{\circ}$.
Take $\eta_1 = D^{-1}b_1$ and $\eta_2 \in (B^{12}_p)^{-1}(b_2)$.

Let us prove that components of $b_2$ are functions from the space $[(\Image{D})^\circ, \gamma] = \Ker{D}$.
Indeed, for any $\xi \in \g_1$ we have $b_2(\xi) = b(\xi) - b_1(\xi) = B^{11}_p(\xi, \gamma) - B^{12}_p(\xi, \eta_1)$.
If $\xi \in \h_1 = (\Image{B^{12}_p})^{\circ}$, then the second term equals zero
and since $\gamma \in \kk = \Ker{B^{11}|_{h_1}}$ the first term is zero as well.
If $\xi \in \mathfrak{n}$, then $b_2(\xi) = 0$.
If $\xi \in (\Image{D})^\circ$, then $B^{12}_p(\xi, \eta_1) = 0$ and $B^{11}_p(\xi, \gamma) = [\xi, \gamma](p)$.

This implies that the components of $\eta_2$ are rational functions whose nominators are polynomials from the space $S(\g_3 \oplus \Ker{D})$
and denominators are polynomials from the space $S(\g_3)$.
Multiply $\gamma$ and $\eta_2$ by the common denominator.
So, the element $\eta_2$ can be viewed as $i_pL$ for a linear map $L : \g^* \rightarrow S(\g^3 \oplus \Ker{D})$.

It remains to take a symmetric bilinear form constructed by the element $\eta_1$ as in Proposition~\ref{prop-bilinear-part}.
\end{proof}

We will associate with $\gamma \in \kk$ a quadratic function (see Proposition~\ref{prop-eta-gives-quadratic})
$$
Q_{\gamma} : \g^* \rightarrow S(\g_3 \oplus \Ker{D}), \qquad Q_{\gamma}(p) = A_{\gamma}(p, p) + L_{\gamma}(p), \qquad p \in \g^*.
$$

It is clear that any element $\gamma \in \kk$ can be considered as a linear function $I_{\gamma}: \g^* \rightarrow S(\h_2)$, where $\h_2 = [\h_1, \h_1]$.
The proof is similar to the proof of Proposition~\ref{prop-Kerb-is-linearCasimir}.
But we multiplied the equation $B^{12}_p(\eta) = B^{11}_p(\gamma)$ by a function polynomial in $\g_3$ in the proofs of
Propositions~\ref{prop-bilinear-part}, \ref{prop-eta-gives-quadratic} above.
So, now $\gamma$ can be viewed as a linear function $I_{\gamma}: \g^* \rightarrow S(\h_2 \oplus \g_3)$.

Denote by $\widehat{I}_{\gamma}(p) = i_pI_{\gamma}(p)$ and $\widehat{Q}_{\gamma}(p) = i_pQ_{\gamma}(p)$ the results of substitution of $p \in \g^*$ into $I_{\gamma}(p)$ and $Q_{\gamma}(p)$, respectively.

We need Propositions~\ref{prop-h2-are-constant-on-O}, \ref{prop-KerD} below
to show that coefficients of the functions $\widehat{I}_{\gamma}$ and $\widehat{Q}_{\gamma}$
are constant on the coadjoint orbits from the set $\W$.

\begin{proposition}
\label{prop-h2-are-constant-on-O}
The equality $\{\g, \h_2\}|_{\W} = 0$ is satisfied.
\end{proposition}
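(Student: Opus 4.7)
The plan is to establish $\{\g,\h_2\}|_{\W}=0$ by a short grading-reduction followed by a single Jacobi-identity computation that feeds into the annihilator relation defining $\h_1$.

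First I would observe that since $\h_2=[\h_1,\h_1]\subset\g_2$, the step-three grading forces $[\g_2,\h_2]\subset[\g_2,\g_2]\subset\g_4=0$ and $[\g_3,\h_2]\subset\g_5=0$. Therefore $\{\g_2,\h_2\}$ and $\{\g_3,\h_2\}$ vanish identically on the whole of $\g^*$, not merely on $\W$, and the problem reduces to proving $\{\g_1,\h_2\}|_{\W}=0$.

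For the remaining piece, I would take arbitrary $\alpha\in\g_1$ and a generator $[\xi,\zeta]\in\h_2$ with $\xi,\zeta\in\h_1$, and apply the Jacobi identity:
$$[\alpha,[\xi,\zeta]]=[\xi,[\alpha,\zeta]]-[\zeta,[\alpha,\xi]].$$
Since $[\alpha,\zeta],[\alpha,\xi]\in\g_2$ and $\xi,\zeta\in\h_1=(\Image B^{12}_p)^{\circ}$, the defining annihilator relation yields $p([\xi,[\alpha,\zeta]])=B^{12}_p([\alpha,\zeta])(\xi)=0$ and $p([\zeta,[\alpha,\xi]])=B^{12}_p([\alpha,\xi])(\zeta)=0$. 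Hence $\{\alpha,[\xi,\zeta]\}(p)=p([\alpha,[\xi,\zeta]])=0$. To upgrade this from a single $p$ to the whole joint level set $\W$, I would invoke Proposition~\ref{prop-same-h-for-coadjoint-orbit}, which guarantees that $\h_1$ (and therefore $\h_2$) is independent of the choice of $p\in\W$, so the same generators of $\h_2$ may be used at every point.

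I do not foresee a genuine obstacle: the argument is essentially degree bookkeeping plus one Jacobi identity played against the annihilator definition of $\h_1$. The only subtle point worth flagging is the appeal to Proposition~\ref{prop-same-h-for-coadjoint-orbit}, without which $\h_2$ could a priori move with $p$ and the pointwise vanishing could not be glued into a statement on all of $\W$.
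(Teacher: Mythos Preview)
Your proposal is correct and follows essentially the same approach as the paper: reduce to $\{\g_1,\h_2\}|_{\W}=0$, apply the Jacobi identity to $\{\xi,\{\gamma_1,\gamma_2\}\}$, and kill each resulting term via the annihilator definition of $\h_1$ together with Proposition~\ref{prop-same-h-for-coadjoint-orbit}. Your grading argument for the reduction step is slightly more explicit than the paper's one-line ``it is sufficient,'' but the substance is identical.
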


\begin{proof}
It is sufficient to show that $\{\g_1, \h_2\}|_{\W} = 0$. For any $\xi \in \g_1$ and any $\gamma_1, \gamma_2 \in \h_1$ we have
$$
\{\xi, \{\gamma_1, \gamma_2\}\} = \{\{\xi, \gamma_1\}, \gamma_2\} + \{\gamma_1, \{\xi, \gamma_2\}\}.
$$
But $\{\xi, \gamma_1\} \in \g_2$ and $\{\{\xi, \gamma_1\}, \gamma_1\}|_{\W} = 0$ by definition of the subspace $\h_1$ and Proposition~\ref{prop-same-h-for-coadjoint-orbit}. The second term of the sum above is also zero by the same reason.
\end{proof}

\begin{proposition}
\label{prop-KerD}
The equality $\{\g, \Ker{D}\}|_{\W} = 0$ is satisfied.
\end{proposition}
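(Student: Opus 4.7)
The plan is to adapt the strategy of Proposition~\ref{prop-h2-are-constant-on-O}. First, reduce the claim $\{\g,\Ker{D}\}|_{\W}=0$ to the single equality $\{\g_1,\Ker{D}\}|_{\W}=0$. This reduction is immediate in the present case, since $\Ker{D}\subset[\gamma,\g_1]\subset\g_2$, so its Poisson bracket with any graded component $\g_m$ for $m\geqslant 2$ lies in $\g_{m+2}=0$ (as $m+2\geqslant 4 > s=3$); only the bracket with $\g_1$ can be nonzero.

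For the main step I would pick $\xi\in\g_1$ and $\eta\in\Ker{D}$ and note that $\{\xi,\eta\}=[\xi,\eta]\in[\g_1,\g_2]\subset\g_3$. By Remark~\ref{rem-linearCasimirs} every element of $\g_3$ is a linear Casimir function, so the function $p\mapsto p([\xi,\eta])$ is constant on the joint level set $\W$ of all Casimir functions. At any single $p\in\W$ the defining property of $\Ker{D}$ gives
$$
p([\xi,\eta])=B^{12}_p(\eta)(\xi)=D(\eta)(\xi)=0,
$$
and by constancy this zero propagates to all of $\W$, proving $\{\xi,\eta\}|_{\W}=0$.

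The only point that requires a sanity check, and is the closest thing to an obstacle, is that $\Ker{D}$ is genuinely a subspace attached to $\W$ rather than to a particular $p\in\W$. This is the analogue for $D$ of Proposition~\ref{prop-same-h-for-coadjoint-orbit}: the matrix entries of $D=B^{12}_p|_{[\gamma,\g_1]}$ are precisely values at $p$ of elements of $\g_3$, hence of linear Casimir functions, and therefore constant along $\W$. Once this is recorded, the computation above is independent of the chosen representative $p\in\W$, and the proposition follows.
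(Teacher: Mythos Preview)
Your proof is correct and follows the same route as the paper's one-line argument: reduce to $\{\g_1,\Ker D\}|_{\W}=0$ and invoke the definition of $\Ker D$. The detour through Casimir constancy of $[\xi,\eta]\in\g_3$ is harmless but unnecessary, since your own final paragraph already shows that $D$ (hence $\Ker D$) is constant along $\W$, so the computation $p([\xi,\eta])=D(\eta)(\xi)=0$ holds directly at every $p\in\W$.
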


\begin{proof}
It is sufficient to show that $\{\g_1, \Ker{D}\}|_{\W} = 0$.
This is true by definition of $\Ker{D}$.
\end{proof}

\begin{theorem}
\label{th-coadjoint-orbits}
Let $G$ be a three-step free nilpotent Carnot group of rank $r \geqslant 3$.
If $p \in \g^*$ is such that $\h_1 = (\Image{B^{12}_{p}})^{\circ} \neq 0$, then the coadjoint orbit $(\Ad^*{G})p$ is a joint level set of\\
\emph{(1)} linear Casimir functions from $\g_3$\emph{;}\\
\emph{(2)} Casimir functions constructed from $\Ker{B^{12}_{p}}$ by Proposition~\emph{\ref{prop-Kerb-is-linearCasimir}}\emph{;} \\
\emph{(3)} functions of the form $\widehat{I}_{\gamma} - \widehat{Q}_{\gamma}$ for $\gamma \in \kk = \Ker{B^{11}_{p}|_{\h_1}}$.
\end{theorem}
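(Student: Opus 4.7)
The plan is to prove the theorem in two movements: first, verify that all functions listed in (1), (2), (3) are constant on the coadjoint orbit through $p$; second, compare dimensions to deduce that the orbit equals a connected component of their joint level set.

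For the first movement, items (1) and (2) are genuine Casimir functions by Remark \ref{rem-linearCasimirs} and Propositions \ref{prop-linearCasimirs}, \ref{prop-Kerb-is-linearCasimir}. The substantive work is showing that each $\widehat{I}_\gamma - \widehat{Q}_\gamma$ is constant on the orbit through $p$ within $\W$. Since $\g$ is Lie-generated by $\g_1$ and the coadjoint flows of elements of $\g_1$ preserve $\W$ (as $\W$ is cut out by Casimirs), it suffices to verify $\{\g_1, \widehat{I}_\gamma - \widehat{Q}_\gamma\}|_\W = 0$. Using the analog of equation (\ref{eq-diff}) I would compute
$$d_p \widehat{I}_\gamma = i_p I_\gamma(\cdot) + d_p I_\gamma(p), \qquad d_p \widehat{Q}_\gamma = \bigl(2 i_p A_\gamma(\cdot, p) + i_p L_\gamma(\cdot)\bigr) + \bigl(\text{derivative into } S(\g_3 \oplus \Ker D)\bigr).$$
The parameter-derivative piece of $d_p \widehat{I}_\gamma$ lies in $\h_2 \oplus \g_3$ and that of $d_p \widehat{Q}_\gamma$ lies in $\g_3 \oplus \Ker D$; by Propositions \ref{prop-h2-are-constant-on-O}, \ref{prop-KerD} together with Remark \ref{rem-linearCasimirs}, each of these has vanishing Poisson bracket with $\g_1$ on $\W$. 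The surviving pieces are an element $i_p I_\gamma(\cdot) \in \g_1$ proportional to $\gamma$, scaled by the common-denominator polynomial $c(p) \in S(\h_2 \oplus \g_3)$ introduced in the proofs of Propositions \ref{prop-bilinear-part}, \ref{prop-eta-gives-quadratic}, and an element $\eta = 2 i_p A_\gamma(\cdot, p) + i_p L_\gamma(\cdot) \in \g_2$ built precisely so that $B^{12}_p(\eta) = c(p)\, B^{11}_p(\gamma)$. The bracket then collapses as
$$\{\xi, \widehat{I}_\gamma - \widehat{Q}_\gamma\}(p) = c(p)\, B^{11}_p(\xi, \gamma) - B^{12}_p(\xi, \eta) = 0, \qquad \xi \in \g_1,\ p \in \W.$$

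For the second movement, I would compute $\dim \Ker B_p$ by splitting $\lambda = \lambda_1 + \lambda_2 + \lambda_3 \in \Ker B_p$ along the grading. Testing $B_p(\cdot,\lambda) = 0$ against $\xi \in \g_2$ forces $\lambda_1 \in \Ker B^{21}_p = (\Image B^{12}_p)^\circ = \h_1$ via $B^{21}_p = -(B^{12}_p)^*$ (as used in the proof of Theorem \ref{th-casimirs-r-3}); testing against $\xi \in \g_1$ requires $B^{11}_p(\lambda_1) \in \Image B^{12}_p$, which by Proposition \ref{prop-preimageB12} means $\lambda_1 \in \kk$, after which $\lambda_2$ is free modulo $\Ker B^{12}_p$; testing against $\g_3$ is automatic. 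Hence $\dim \Ker B_p = \dim \g_3 + \dim \kk + \dim \Ker B^{12}_p$, so the orbit at $p$ has exactly this codimension in $\g^*$, which matches the total count of functions in (1), (2), (3). Functional independence at $p$, verified by examining differentials filtered by $\g_3, \g_2, \g_1$, then identifies the orbit with a connected component of the joint level set.

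The main obstacle is the bookkeeping in the first movement: tracking the common-denominator scaling $c(p)$ so that the relation $B^{12}_p(\eta) = c(p)\, B^{11}_p(\gamma)$ cancels the $\widehat{I}_\gamma$ contribution exactly, and confirming that the parameter-derivative pieces indeed land in $\h_2 \oplus \g_3 \oplus \Ker D$ so that Propositions \ref{prop-h2-are-constant-on-O} and \ref{prop-KerD} dispose of them on $\W$.
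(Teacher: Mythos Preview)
Your proposal is correct and follows essentially the same route as the paper: first verify constancy of the type-(3) functions on the orbit via the differential identities \eqref{eq-diff}, \eqref{eq-diff-quad} together with Propositions~\ref{prop-h2-are-constant-on-O} and~\ref{prop-KerD}, then match $\dim\Ker B_p$ to the count of functions by the graded splitting $\lambda=\lambda_1+\lambda_2+\lambda_3$. Two small remarks: your invocation of Proposition~\ref{prop-preimageB12} in the second movement is really the converse direction (you use $\Image B^{12}_p=\h_1^{\circ}$ and the definition of $\kk$, exactly as in the paper's proof), and your hedge ``connected component'' is a prudent refinement --- the paper's proof likewise only establishes the codimension equality, with equality of the orbit and the full level set resting on the graph-like form of the type-(3) functions.
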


\begin{proof}
We proved in Section~\ref{sec-construction-of-Casimir-functions} that the functions of the first and the second types are constant on the coadjoint orbit $\orbit = (\Ad^*{G})p$.
Now we need to check that $(\widehat{I}_{\gamma} - \widehat{Q}_{\gamma})|_{\orbit} = \const$.

Consider the Hamiltonian vector field $\vec{\xi}$ on $\g^*$ corresponding to a linear function $\xi \in \g$. It is well known that this vector field coincides with the velocity field of the coadjoint action of the Lie group $G$ on the coalgebra $\g^*$. So, a function $f \in C^{\infty}(\g^*)$ is constant on the orbit $\orbit$ if and only if
for all $\xi \in \g$ we have $\vec{\xi} f = \{\xi, f\} = 0$ at any point of this orbit.
The Lie algebra $\g$ is generated by the subspace $\g_1$. So, it is sufficient to check the condition above for $\xi \in \g_1$.

Note, that by Leibnitz identity the differential of the function $\widehat{Q}_{\gamma}$ is equal to
\begin{equation}
\label{eq-diff-quad}
d_p \widehat{Q}_{\gamma} = 2i_pA_{\gamma}(\, \cdot \, , p) + i_pL_{\gamma}(\cdot) + d_pQ_{\gamma}(p).
\end{equation}

Using formulas~\eqref{eq-diff} and \eqref{eq-diff-quad} we obtain
$$
\{\xi, \widehat{I}_{\gamma}\}(p) = B_p(\xi, i_{p}I_{\gamma}(\cdot) + d_pI_{\gamma}(p)) = B^{11}_p(\xi, \gamma) + \{\xi, I_{\gamma}(p)\}(p),
$$
$$
\{\xi, \widehat{Q}_{\gamma}\}(p) = B_p(\xi, 2i_{p}A_{\gamma}(\, \cdot \, , p) + i_pL_{\gamma}(\cdot) + d_pQ_{\gamma}(p)) = B^{12}_p(\xi, \eta) + \{\xi, Q_{\gamma}(p)\}(p).
$$
We know that $I_{\gamma}(p) \in S(\h_2 \oplus \g_3)$ and $Q_{\gamma}(p) \in S(\g_3 \oplus \Ker{D})$. By Propositions~\ref{prop-h2-are-constant-on-O}, \ref{prop-KerD} we get $\{\xi, I_{\gamma}(p)\} = 0$ and $\{\xi, Q_{\gamma}(p)\} = 0$ on the orbit $\orbit$.

Consequently, $\{\xi, \widehat{I}_{\gamma} - \widehat{Q}_{\gamma}\}(p) = B^{11}_p(\xi, \gamma) - B^{12}_p(\xi, \eta) = 0$ since $\eta \in (B^{12}_p)^{-1}B^{11}_p(\gamma)$.

It remains to show that we found a complete system of functions. Since $\codim{\orbit} = \dim{\Ker{B_p}}$ let us find $\dim{\Ker{B_p}}$.
Let $\lambda \in \Ker{B_p}$ and $\lambda = \lambda_1 + \lambda_2 + \lambda_3$, where $\lambda_i \in \g_i$ for $i=1,2,3$.
It is clear that $\lambda_3 \in \Ker{B_p}$ and $\lambda_1 \in \Ker{B^{21}_p}$. But $B^{21}_p = -(B^{12}_p)^*$, then $\lambda_1 \in (\Image{B^{12}_p})^{\circ} = \h_1$.
Next, $B^{11}_p(\lambda_1) + B^{12}_p(\lambda_2) = 0$, consequently $B^{11}_p(\lambda_1) \in \Image{B^{12}_p} = \h_1^{\circ}$.
So, $\lambda_1 \in \Ker{B^{11}_p|_{\h_1}}$ and
$\lambda_2 \in (B^{12}_p)^{-1}B^{11}_p(\lambda_1)$. We get
\begin{equation}
\label{eq-codim-O}
\codim{\orbit} = \dim{\Ker{B_p}} = \dim{\Ker{B^{11}_p|_{\h_1}}} + \dim{\Ker{B^{12}_p}} + \dim{\g_3},
\end{equation}
this number coincides with the number of independent functions of types (1)--(3).
\end{proof}

\begin{remark}
\label{rem-dim-of-coadjoint-orbits}
Let $\dim{\h_1} = k_1$ and $\dim{\Ker{B^{11}_{p}|_{h_1}}} = k_2$. Then $\dim{\Ker{B^{12}_p}} = \dim{\g_2} - r + k_1$. Using formula~\eqref{eq-codim-O} we obtain
$\dim{\orbit} = 2r - (k_1 + k_2)$. Since $k_2$ is dimension of the kernel of skew-symmetric matrix of size $k_1$, these numbers have the same parity and the dimension of a coadjoint orbit is even.
\end{remark}

\begin{remark}
Any element $\gamma \in \Ker{B^{11}_p|_{\h_1}}$ determines an element $\eta \in (B^{12}_p)^{-1}B^{11}_p(\gamma)$ up to the element of $\Ker{B^{12}_p}$.
The corresponding function $\widehat{I}_{\gamma} - \widehat{Q}_{\gamma}$ is defined up to addition of a Casimir function.
\end{remark}

\begin{corollary}
\label{crl-coadjointorbitstypes}
Any coadjoint orbit of a three-step free nilpotent Carnot group is an affine subspace or a direct product of nonsingular quadrics.
If rank of the group is greater than \emph{2}, then any general coadjoint orbit is an affine subspace.
\end{corollary}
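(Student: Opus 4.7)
The plan is to reduce the corollary to Theorem~\ref{th-coadjoint-orbits} and Example~\ref{ex-Cartan-group} by splitting according to rank and genericity of the point $p \in \g^*$.

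First, I would treat general orbits in rank $r \geq 3$. If $p$ satisfies $\Image{B^{12}_p} = \g_1^*$ (which holds for almost every $p$ by Proposition~\ref{prop-b-is-surj}), then $\h_1 = 0$ and $\kk = 0$, so the type~(3) functions in Theorem~\ref{th-coadjoint-orbits} are absent. The orbit is cut out by the type~(1) linear Casimirs from $\g_3$ and by the type~(2) Casimirs, which by construction (Theorem~\ref{th-casimirs-r-s}) become linear in $\g_2$ once the type~(1) values have been fixed. Intersecting these level sets yields an affine subspace, which settles the second sentence of the corollary.

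Next, I would treat special orbits in rank $r \geq 3$, where $\h_1 \neq 0$. Fixing the common values of the type~(1) and type~(2) Casimirs restricts us to an affine subspace $\W \subset \g^*$; inside it, the orbit is carved out by the equations $\widehat{I}_\gamma - \widehat{Q}_\gamma = \const$ for $\gamma \in \kk$. By Propositions~\ref{prop-h2-are-constant-on-O} and~\ref{prop-KerD} the coefficients of $I_\gamma$ and $L_\gamma$ (living in $S(\h_2 \oplus \g_3)$ and $S(\g_3 \oplus \Ker{D})$ respectively) are constant on $\W$, so each such relation becomes an honest quadratic equation on $\W$. If $\kk = 0$ the orbit is still an affine subspace; otherwise, I would choose a basis of $\kk$ adapted to $\h_2$ and $\Ker{D}$ so that the forms $A_\gamma$ act on disjoint coordinate blocks, exhibiting the orbit as a direct product. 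Nondegeneracy of each factor would follow from the codimension count~\eqref{eq-codim-O} and the parity statement of Remark~\ref{rem-dim-of-coadjoint-orbits}: a singular quadric would produce a factor of the wrong dimension or parity, contradicting the fact that we have exhibited $\dim{\Ker{B_p}}$ independent defining functions.

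For the Cartan group $r = 2$, I would invoke Example~\ref{ex-Cartan-group} directly. When $\xi_{112} = \xi_{212} = 0$ the orbits are points or the affine planes $\xi_{12} = \const \neq 0$; when $(\xi_{112}, \xi_{212}) \neq 0$, fixing these two linear Casimirs reduces the quadratic Casimir to $\tfrac{1}{2}\xi_{12}^2 + \xi_{212}\xi_1 - \xi_{112}\xi_2 = \const$, whose gradient $(\xi_{212}, -\xi_{112}, \xi_{12})$ does not vanish, so the level set is a nonsingular quadric.

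The main obstacle I anticipate is the simultaneous block-diagonal structure and nonsingularity of the family $\{A_\gamma\}_{\gamma \in \kk}$ in the second step: one has to organize the coordinates on $\W$ so that distinct basis vectors of $\kk$ contribute quadratic forms on disjoint coordinate blocks. I expect this to follow from Proposition~\ref{prop-D-is-symmetric} together with the explicit formula $2A(p,q) = -q D^{-1}(\ad^* \gamma)p$ of Proposition~\ref{prop-bilinear-part}, but turning this observation into a clean product decomposition of the orbit is the delicate point of the argument.
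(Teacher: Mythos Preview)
Your treatment of general orbits for $r \geq 3$ and of the Cartan group $r=2$ is correct and matches the paper. The genuine gap is in the special-orbit case, precisely at the two points you flag as delicate.

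For the disjoint-block structure, the paper's mechanism is more specific than ``choose a basis adapted to $\h_2$ and $\Ker D$''. The homogeneous quadratic part of $\widehat{Q}_{\gamma_i}$ involves only the coordinates from the subspace $S_i = [\gamma_i, \g_1] \subset \g_2$ (this is visible from $\eta_1 = D^{-1}B^{11}_p(\gamma_i)$, since $D$ is $B^{12}_p$ restricted to $[\gamma_i,\g_1]$). The claim that distinct $S_i$ intersect trivially on $\W$ then uses the \emph{freeness} of the nilpotent Lie algebra: if $[\gamma_i,\xi] = [\gamma_j,\zeta]$ for basis vectors $\gamma_i \neq \gamma_j$ of $\kk$, freeness forces $\xi = \gamma_j$ and $\zeta = -\gamma_i$, so the common element is $[\gamma_i,\gamma_j]$, which vanishes on $\W$ because $\gamma_i,\gamma_j \in \Ker B^{11}_p|_{\h_1}$. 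The formula from Proposition~\ref{prop-bilinear-part} that you cite is a step toward this, but the freeness argument is the missing ingredient; ``adapted to $\h_2$ and $\Ker D$'' does not by itself produce disjoint supports.

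Your nondegeneracy argument via the codimension count~\eqref{eq-codim-O} and the parity of Remark~\ref{rem-dim-of-coadjoint-orbits} does not work: a singular quadric still has codimension one, and the parity of $\dim\orbit$ says nothing about the singularity of individual quadric factors. The correct reason is simpler. The linear part $\widehat{I}_{\gamma_i}$ lives in the $\g_1$-variable $\gamma_i$ with nonzero coefficient (the $\gamma_i$ are linearly independent), while the quadratic part $\widehat{Q}_{\gamma_i}$ involves only $\g_2$-variables from $S_i$. Hence each defining equation has the shape $c\,x_i + q_i(y_i) = \const$ with $c \neq 0$ and $x_i, y_i$ disjoint from the variables of the other equations; such a hypersurface is automatically nonsingular, and the product structure is immediate.
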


\begin{proof}
For the special case of the Cartan group ($r = 2$) see Example~\ref{ex-2358} and~\cite{sachkov-didona}. Consider the case $r \geqslant 3$.

If we fix values of the linear Casimir functions from the space $\g_3$, then the Casimir functions constructed from the space $\Ker{B^{12}_p}$ with the help of Proposition~\ref{prop-Kerb-is-linearCasimir} become linear on the joint level set of the first ones.
So, general coadjoint orbits coincide with the sets $\W$ that are affine subspaces.

By Propositions~\ref{prop-h2-are-constant-on-O}, \ref{prop-KerD} the linear functions of the forms $\h_2$ and $\Ker{D}$ are constant on coadjoint orbits from the set
$\W$. Let us fix their values. Then the functions $\widehat{I}_{\gamma}$ and $\widehat{Q}_{\gamma}$ become linear and quadratic, respectively.

Let us look at quadratic functions $\widehat{I}_{\gamma_i} - \widehat{Q}_{\gamma_i}$,
where $\gamma_i$ are basis vectors of the subspace $\Ker{B^{11}|_{\h_1}}$
First, the linear parts of these functions are linearly independent since $\gamma_i$ are linearly independent.
Second, their homogeneous quadratic parts depend on the variables from the subspaces $S_i = [\gamma_i, \g]$.
An intersection of any pair of these subsets is trivial on $\W$.
Really, let $\xi_i, \xi_j \in \g$ be such that $[\gamma_i, \xi_i] = [\gamma_j, \xi_j]$.
But our Lie algebra is free nilpotent, so, $\gamma_j = \xi_i$ and $\gamma_i = -\xi_j$, and we obtain $[\gamma_i, \xi_i] = [\gamma_i, \gamma_j]$.
Since $\gamma_i \in \Ker{B^{11}_p|_{\h_1}}$ we have $[\gamma_i, \gamma_j]|_{\W} = 0$.

Finally, it follows that the quadratic functions $\widehat{I}_{\gamma_i} - \widehat{Q}_{\gamma_i}$ depend on different sets of variables.
So, the corresponding joint level sets are direct products of quadrics.
\end{proof}

\begin{remark}
\label{rem-h-has-step-2-and-lower-rank}
The additional functions that determine coadjoint orbits are constructed by a free Carnot group of step 2 and lower rank.
Really, the subspace $\h_1|_{\W}$ (elements of $\h_1$ regarded as linear functions on the set $\W$) generates a two-step free nilpotent Lie algebra because
$\{\h_1, \h_2\}|_{\W} = 0$ by Proposition~\ref{prop-h2-are-constant-on-O}.
\end{remark}

\section{\label{sec-algorithm}Algorithm}

In this section we give an algorithm for constructing Casimir functions based on results of Sections~\ref{sec-construction-of-Casimir-functions} and \ref{sec-coadjoint-orbits}.
Also this section is an illustrative material for previous sections.

Choose any basis of a step $3$ free nilpotent Lie algebra agreed with the graded structure:
$$
\g_1 = \sspan{\{\xi_1,\dots,\xi_r\}}, \qquad \g_2 = \sspan{\{\xi_{12},\dots,\xi_{(r-1)\,r}\}}, \qquad \g_3 = \sspan{\{\xi_{112},\dots,\xi_{r\,(r-1)\,r}\}},
$$
$$
\xi_{ij} = [\xi_i, \xi_j], \qquad \xi_{ijk} = [\xi_i, \xi_{jk}].
$$
Of course, $\xi_{ijk}$ are linearly dependent because of the Jacobi identity. Choose a linearly independent system.

The matrix of the bi-vector $B_{p}$ in this basis reads as shown in Table~\ref{tb-B}.

\begin{table}[h]
\centering
\parbox{.45\linewidth}{
\centering
\begin{tabular}{cccc}
                      &         $\g_1$              &        $\g_2$               &         $\g_3$              \\ \cline{2-4}
\multicolumn{1}{c|}{$\g_1$} & \multicolumn{1}{c|}{$B^{11}_{p}$} & \multicolumn{1}{c|}{$B^{12}_{p}$} & \multicolumn{1}{c|}{$0$} \\ \cline{2-4}
\multicolumn{1}{c|}{$\g_2$} & \multicolumn{1}{c|}{$-(B^{12}_{p})^T$} & \multicolumn{1}{c|}{$0$} & \multicolumn{1}{c|}{$0$} \\ \cline{2-4}
\multicolumn{1}{c|}{$\g_3$} & \multicolumn{1}{c|}{$0$} & \multicolumn{1}{c|}{$0$} & \multicolumn{1}{c|}{$0$} \\ \cline{2-4}
\end{tabular}
\caption{\label{tb-B}The matrix of the bi-vector $B_p$.}
}
\hfil
\parbox{.45\linewidth}{
\centering
\begin{tabular}{cccc}
                      &         $\g_1$              &        $\g_2$               &         $\g_3$              \\ \cline{2-4}
\multicolumn{1}{c|}{$\g_1$} & \multicolumn{1}{c|}{$\g_2$} & \multicolumn{1}{c|}{$\g_3$} & \multicolumn{1}{c|}{$0$} \\ \cline{2-4}
\multicolumn{1}{c|}{$\g_2$} & \multicolumn{1}{c|}{$\g_3$} & \multicolumn{1}{c|}{$0$} & \multicolumn{1}{c|}{$0$} \\ \cline{2-4}
\multicolumn{1}{c|}{$\g_3$} & \multicolumn{1}{c|}{$0$} & \multicolumn{1}{c|}{$0$} & \multicolumn{1}{c|}{$0$} \\ \cline{2-4}
\end{tabular}
\caption{\label{tb-pb}Poisson brackets.}
}
\end{table}

Here the rows and the columns are numerated by the subspaces $\g_1$, $\g_2$, $\g_3$. The matrix $B^{11}_{p}$ is a skew-symmetric $r\times r$ matrix and $B^{12}_{p}$ is an $r \times r(r-1)/2$ matrix. Table~\ref{tb-pb} shows that elements of the matrices $B^{11}_{p}$ and $B^{12}_{p}$ belong to the subspaces $\g_2$ and $\g_3$ respectively.

The basis of the subspace $\g_3$ forms a system of linear Casimir functions.

Now let us construct Casimir functions from the matrix $B^{12}_p$.
Let $\eta_1, \dots, \eta_{d_2}$ be a basis of the subspace $\g_2$, where $d_2 = \dim{\g_2}$.
Add a row consisting of these basis vectors to the matrix $B^{12}_p$.
Then every $(r+1)\times (r+1)$-minor of the resulting matrix is a Casimir function.
Indeed, taking the Poisson bracket of this function with any element $\xi \in \g_1$ we get
a $(r+1)\times (r+1)$-minor of the matrix $B^{12}_p$ with additional row $\{\xi, \eta_1\}, \dots, \{\xi, \eta_{d_2}\}$.
But this row depends linearly on the other rows. So, our minor equals zero.
It is clear that this Casimir function is a polynomial of degree $r+1$ and it is linear in $\eta_1, \dots, \eta_{d_2}$.
The explicit formula for these Casimir functions reads as:
$$
F_i = \sum\limits_{j \in J}{(-1)^{j-i}\eta_j \det{\{\xi_k, \eta_l\}_{k \in \{1,\dots,r\}}^{l \in J \setminus \{j\}}}}, \qquad
i = 1, \dots, d_2 - r, \qquad
J = \{i,\dots,i+r\}.
$$
Furthermore, this gives a constructive proof of Proposition~\ref{prop-Kerb-is-linearCasimir}.
A result of explicit computations for $r=4$ can be found in \nameref{sec-appendix}.

If $\rk{B^{12}_{p}} = r$ we get a complete system of Casimir functions.
Now discuss a special case when $\rk{B^{12}_{p}} < r$ and construct additional functions which determine coadjoint orbits.

Reduce the matrix $B^{12}_{p}$ to a stepwise form with the help of the following operations:
\begin{enumerate*}
\item permutation of columns;
\item multiplication of a row by an element from the subspace $\g_3$ which does not vanish at the point $p$;
\item replacing a row by its sum with another row.
\end{enumerate*}

We will get a stepwise matrix with elements that are polynomials of elements of the subspace $\g_3$.
Note that this stepwise matrix is written in the permuted basis of the subspace $\g_2$ and a basis of the subspace $\g_1$ whose elements are linear combinations of $\xi_1,\dots,\xi_r$ with coefficients from $\g_3$ that are constant on the set $\mathcal{W}$.

To construct a function of the form $i_pI(p)$ such that $i_pI(\cdot) \in \Ker{B^{12}_{p}}$:
\begin{enumerate*}
\item take non-zero rows of the matrix $B^{12}_p$ and add to them the row $\eta_1, \dots, \eta_{d_2}$;
\item take $(\rk{B^{12}_p} + 1)\times(\rk{B^{12}_p} + 1)$-minors of the resulting matrix as above.
\end{enumerate*}

Consider the subspace $\h_1 = (\Image{B^{12}_{p}})^\circ \subset \g_1$ and the skew-symmetric matrix
$C = B^{11}_{p}|_{\h_1}$. We already know a basis of the subspace $\h_1$ which consists of linear combinations of $\xi_1,\dots,\xi_r$ with coefficients from $\g_3$.
Note that we constructed this basis during making the matrix $B^{12}_{p}$ stepwise. For the matrix $B_{p}$ see Table~\ref{tb-special-B}.

\begin{table}[h]
\centering
\parbox{.45\linewidth}{
\centering
\begin{tabular}{cccccc}
\cline{3-4}
                                              & \multicolumn{1}{c|}{}       & \multicolumn{2}{c|}{$\g_1$}                                               &                                        &                          \\ \cline{3-4}
                                              &                             &                                             & $\h_1$                      & $\g_2$                                 & $\g_3$                   \\ \cline{1-1} \cline{3-6}
\multicolumn{1}{|c|}{\multirow{2}{*}{$\g_1$}} & \multicolumn{1}{c|}{}       & \multicolumn{1}{c|}{$\ast$}                 & \multicolumn{1}{c|}{$\ast$} & \multicolumn{1}{c|}{$B^{12}_{p}$} & \multicolumn{1}{c|}{$0$} \\ \cline{3-6}
\multicolumn{1}{|c|}{}                        & \multicolumn{1}{c|}{$\h_1$} & \multicolumn{1}{c|}{$\ast$}                 & \multicolumn{1}{c|}{$C$}      & \multicolumn{1}{c|}{$0$}               & \multicolumn{1}{c|}{$0$} \\ \cline{1-1} \cline{3-6}
                                              & \multicolumn{1}{c|}{$\g_2$} & \multicolumn{1}{c|}{$-(B^{12}_{p})^T$} & \multicolumn{1}{c|}{$0$}    & \multicolumn{1}{c|}{$0$}               & \multicolumn{1}{c|}{$0$} \\ \cline{3-6}
                                              & \multicolumn{1}{c|}{$\g_3$} & \multicolumn{1}{c|}{$0$}                    & \multicolumn{1}{c|}{$0$}    & \multicolumn{1}{c|}{$0$}               & \multicolumn{1}{c|}{$0$} \\ \cline{3-6}
\end{tabular}
\caption{\label{tb-special-B}The matrix of the bi-vector $B_p$ in the special case $\rk{B^{12}_p} < r$.}
}
\hfil
\parbox{.45\linewidth}{
\centering
\begin{tabular}{cccccc}
\cline{4-6}
                                              &                                              & \multicolumn{1}{c|}{}      & \multicolumn{3}{c|}{$\g_1$}                                                             \\ \cline{4-6}
                                              &                                              &                            & \multicolumn{1}{c|}{}       & \multicolumn{2}{c|}{$\h_1$}                               \\ \cline{5-6}
                                              &                                              &                            &                             &                             & $\kk$                       \\ \cline{1-1} \cline{4-6}
\multicolumn{1}{|c|}{\multirow{3}{*}{$\g_1$}} &                                              & \multicolumn{1}{c|}{}      & \multicolumn{1}{c|}{$\ast$} & \multicolumn{1}{c|}{$\ast$} & \multicolumn{1}{c|}{$\ast$} \\ \cline{2-2} \cline{4-6}
\multicolumn{1}{|c|}{}                        & \multicolumn{1}{c|}{\multirow{2}{*}{$\h_1$}} & \multicolumn{1}{c|}{}      & \multicolumn{1}{c|}{$\ast$} & \multicolumn{1}{c|}{$\ast$} & \multicolumn{1}{c|}{$0$}    \\ \cline{4-6}
\multicolumn{1}{|c|}{}                        & \multicolumn{1}{c|}{}                        & \multicolumn{1}{c|}{$\kk$} & \multicolumn{1}{c|}{$\ast$} & \multicolumn{1}{c|}{$0$}    & \multicolumn{1}{c|}{$0$}    \\ \cline{1-2} \cline{4-6}
\end{tabular}
\caption{\label{tb-B11}The matrix $B^{11}_p$ and its kernel.}
}
\end{table}

Consider the skew-symmetric matrix $C$. Bring this matrix to a block-diagonal form and find a basis of its kernel $\kk$, see Table~\ref{tb-B11}.
Let $\zeta_1, \dots, \zeta_r$ be the corresponding basis of the subspace $\g_1$ such that the last $k_2$ vectors form a basis of the subspace $\kk$.
For an element $\gamma \in \kk$ consider the vector $b$ with coordinates $b_i = [\zeta_i, \gamma]$ for $i=1,\dots,r-k_1$ (see Table~\ref{tb-b}) and solve the equation $B^{12}_p\eta = b$ for $\eta \in \g_2$ the following way.

Consider the basis of the subspace $\g_2$ such that its first $r-k_1$ vectors are $[\zeta_i, \gamma]$.
The matrix $D$ is an $(r-k_1)\times(r-k_1)$ fragment of the matrix $B^{12}_p$, see Table~\ref{tb-D}.
Let $b = b_1 + b_2$, where $b_1 \in \Image{D}$.

Find $\eta_1 = D^{-1}(b_1)$ with the help of Cramer's rule. Then each coordinate of $\eta_1$ is a fraction, where the nominator is linear in $b_1$ and the denominator equals $\det{D}$ that is a function polynomial in $\g_3$. Multiply $\eta_1$ and $\gamma$ by this denominator.

Find $\eta_2 \in (B^{12})^{-1}(b_2)$. The coordinates of $\eta_2$ are fractions, where the nominators are linear in $b_2$ and the denominators are polynomials of variables from $\g_3$. Multiply $\eta_1$, $\eta_2$ and $\gamma$ by the common denominator.

\begin{table}[h]
\centering
\parbox{.45\linewidth}{
\centering
\begin{tabular}{ccccccccccccc}
\cline{3-13}
                                              & \multicolumn{1}{c|}{} & \multicolumn{5}{c|}{$\kk$}                                                                                                              & \multicolumn{6}{c|}{$\g_2$}                      \\ \cline{3-13}
                                              &                       &                       &                      & $\gamma$                                  &                       &                      &        &        &        &       &       &       \\ \cline{1-1} \cline{3-13}
\multicolumn{1}{|c|}{\multirow{5}{*}{$\g_1$}} & \multicolumn{1}{c|}{} & \multicolumn{2}{c|}{\multirow{3}{*}{$\ast$}} & \multicolumn{1}{c|}{\multirow{3}{*}{$b$}} & \multicolumn{2}{c|}{\multirow{3}{*}{$\ast$}} & \multicolumn{6}{c|}{\multirow{3}{*}{$B^{12}_p$}} \\
\multicolumn{1}{|c|}{}                        & \multicolumn{1}{c|}{} & \multicolumn{2}{c|}{}                        & \multicolumn{1}{c|}{}                     & \multicolumn{2}{c|}{}                        & \multicolumn{6}{c|}{}                            \\
\multicolumn{1}{|c|}{}                        & \multicolumn{1}{c|}{} & \multicolumn{2}{c|}{}                        & \multicolumn{1}{c|}{}                     & \multicolumn{2}{c|}{}                        & \multicolumn{6}{c|}{}                            \\ \cline{3-13}
\multicolumn{1}{|c|}{}                        & \multicolumn{1}{c|}{} & \multicolumn{5}{c|}{\multirow{2}{*}{$0$}}                                                                                               & \multicolumn{6}{c|}{\multirow{2}{*}{$0$}}        \\
\multicolumn{1}{|c|}{}                        & \multicolumn{1}{c|}{} & \multicolumn{5}{c|}{}                                                                                                                   & \multicolumn{6}{c|}{}                            \\ \cline{1-1} \cline{3-13}
\end{tabular}
\caption{\label{tb-b}Fragment of the matrix $B_p$ and vector $b$.}
}
\hfil
\parbox{.45\linewidth}{
\centering
\begin{tabular}{cccllc}
                                           &                                                & \multicolumn{3}{c}{$[\zeta_1,\gamma], [\zeta_2, \gamma], \dots$} & $\Ker{D}$                                 \\ \cline{2-6}
\multicolumn{1}{c|}{\multirow{3}{*}{\rotatebox[origin=c]{90}{$\Image{D}$, $b_1$}}} & \multicolumn{1}{c|}{$[\zeta_1,\gamma]$}        & \multicolumn{3}{c|}{\multirow{4}{*}{$\ast$}}                     & \multicolumn{1}{c|}{\multirow{4}{*}{$0$}} \\
\multicolumn{1}{c|}{}                      & \multicolumn{1}{c|}{$[\zeta_2,\gamma]$}        & \multicolumn{3}{c|}{}                                            & \multicolumn{1}{c|}{}                     \\
\multicolumn{1}{c|}{}                      & \multicolumn{1}{c|}{\multirow{2}{*}{$\vdots$}} & \multicolumn{3}{c|}{}                                            & \multicolumn{1}{c|}{}                     \\
\multicolumn{1}{c|}{}                      & \multicolumn{1}{c|}{}                          & \multicolumn{3}{c|}{}                                            & \multicolumn{1}{c|}{}                     \\ \cline{2-6}
\multicolumn{1}{c|}{}                      & \multicolumn{1}{c|}{\multirow{2}{*}{$b_2$}}    & \multicolumn{3}{c|}{\multirow{2}{*}{$0$}}                        & \multicolumn{1}{c|}{\multirow{2}{*}{$0$}} \\
\multicolumn{1}{c|}{}                      & \multicolumn{1}{c|}{}                          & \multicolumn{3}{c|}{}                                            & \multicolumn{1}{c|}{}                     \\ \cline{2-6}
\end{tabular}
\caption{\label{tb-D}The matrix $D$ and the vectors $b_1$, $b_2$.}
}
\end{table}

To construct the function $\widehat{I}_{\gamma}$ multiply each coordinate of the element $\gamma$ by the corresponding basis vector of the subspace $\kk$ and take the sum of these products.

Construct the function $\widehat{Q}_{\gamma}$ in two steps.
First, multiply each coordinate of the element $\eta_1$ by the corresponding basis vector of the subspace $\g_2$ and divide the sum of these products by 2.
We get the function $i_pA_{\gamma}(p, p)$.
Second, multiply each coordinate of the element $\eta_2$ by the corresponding basis vector of the subspace $\g_2$ and take the sum of these products.
This is the function $i_pL_{\gamma}(p)$.
The result is the function $\widehat{Q}_{\eta} = i_pA_{\gamma}(p, p) + i_pL_{\gamma}(p)$.

Finally, the function $\widehat{I}_{\gamma} - \widehat{Q}_{\eta}$ is a required function.

\begin{example}
\label{ex-33}
Consider the free Carnot group of rank 3 and step 3.
The corresponding Lie algebra $\g = \g_1 \oplus \g_2 \oplus \g_3$ has the following structure:
$$
\g_1 = \sspan{\{\xi_1, \xi_2, \xi_3\}}, \qquad
\g_2 = \sspan{\{\xi_{12}, \xi_{13}, \xi_{23}\}}, \qquad
\g_3 = \sspan{\{\xi_{ijk}\}},
$$
$$
[\xi_j, \xi_k] = \xi_{jk}, \qquad [\xi_k, \xi_{jk}] = \xi_{ijk}, \qquad j < k,
$$
$$
\xi_{123} - \xi_{213} + \xi_{312} = 0.
$$
For general $p \in \g^*$ the $3\times 3$-matrix $B^{12}_p$ has trivial kernel.
So, we have only 8 independent linear Casimir functions of the form $\g_3$.

Consider now the special case of $\rk{B^{12}_p} = 2$.
After a change of basis we may assume that $\xi_{312} = \xi_{313} = \xi_{323} = 0$.
Consequently, $\xi_{123} = \xi_{213}$.
The one-dimensional subspace $\Ker{B^{12}_p}$ gives us the following Casimir function that is linear on joint level sets of functions of the first type:
$$
\xi_{12}(\xi_{113}\xi_{223}-\xi_{213}\xi_{123}) +
\xi_{13}(\xi_{212}\xi_{123}-\xi_{112}\xi_{223}) +
\xi_{23}(\xi_{112}\xi_{213}-\xi_{113}\xi_{212}).
$$
Next, we have the subspace $h_1 = \sspan{\{\xi_3\}}$ and $\gamma = \xi_3$.
We obtain the following matrix $D$ and vector $b = B^{11}_p(\gamma)$:
$$
D = \left(
\begin{array}{cc}
\xi_{113} & \xi_{123} \\
\xi_{213} & \xi_{223} \\
\end{array}
\right), \qquad
b = \left(
\begin{array}{c}
\xi_{13}\\
\xi_{23}
\end{array}
\right).
$$
If $\det{D} \neq 0$, then solve the equation $D\eta = b$. Multiplying the solution by $\det{D}$ we obtain
$$
\eta^{13} = \xi_{13}\xi_{223} - \xi_{23}\xi_{123}, \qquad \eta^{23} = \xi_{23}\xi_{113} - \xi_{13}\xi_{213}.
$$
Construct the functions
$$
\textstyle
\widehat{Q}_{\gamma} = {{1}\over{2}}(\eta^{13}\xi_{13} + \eta^{23}\xi_{23}), \qquad
\widehat{I}_{\gamma} = \xi_3 \det{D}.
$$
Finally, the additional function that is constant on special coadjoint orbits is
$$
\textstyle
\widehat{I}_{\gamma} - \widehat{Q}_{\gamma} = \xi_3 (\xi_{113}\xi_{223} - \xi_{123}^2) -
{{1}\over{2}}(\xi_{13}^2\xi_{223} - \xi_{13}\xi_{23} (\xi_{123} + \xi_{213}) + \xi_{23}^2\xi_{113}).
$$
The corresponding coadjoint orbit is a direct product of a plane and a hyperbolic paraboloid or an elliptic paraboloid when $\det{D} < 0$ or $\det{D} > 0$, respectively.

Now assume that the matrix $D$ is degenerate. For instance $\xi_{123} = \xi_{223} = 0$, i.e., $\Ker{D} = \sspan{\{\xi_{23}\}}$.
Solving the equation $B^{12}_p(\eta) = b$ we obtain
$$
\eta_1^{13} = \frac{\xi_{13}}{\xi_{113}}, \qquad \eta_1^{12} = \eta_1^{23} = 0,
$$
$$
\eta_2^{12} = \frac{\xi_{23}}{\xi_{212}}, \qquad \eta_2^{13} = -\frac{\xi_{23}\xi_{112}}{\xi_{113}\xi_{212}}, \qquad \eta_2^{23} = 0.
$$
Multiply $\eta_1, \eta_2$ and $\gamma$ by $\xi_{113}\xi_{212}$. We obtain
$$
\bar{\eta}_1^{13} = \xi_{13}\xi_{212}, \qquad
\bar{\eta}_2^{12} = \xi_{23}\xi_{113}, \qquad
\bar{\eta}_2^{13} = -\xi_{23}\xi_{112}.
$$
Taking ${{1}\over{2}}\bar{\eta}_1^{13}\xi_{13} + \bar{\eta}_2^{12}\xi_{12} + \bar{\eta}_2^{13}\xi_{13}$ we obtain $\widehat{Q}_{\gamma}$.
Next, $\widehat{I}_{\gamma} = \xi_3\xi_{113}\xi_{212}$. Finally, we get
$$
\textstyle
\widehat{I}_{\gamma} - \widehat{Q}_{\gamma}  = \xi_3\xi_{113}\xi_{212} - {{1}\over{2}}\xi_{13}^2\xi_{212} - \xi_{12}\xi_{23}\xi_{113} + \xi_{13}\xi_{23}\xi_{112}.
$$
The corresponding coadjoint orbit is a parabolic cylinder (remind that $\xi_{23}$ is constant on the joint level set $\mathcal{W}$ of linear Casimir functions).

See Table~\ref{tb-orbits-3-3} for the coadjoint orbits types depending on the Poisson bi-vector.

\begin{table}[h!]
\centering
\begin{tabular}{ccclc}
\multicolumn{3}{c}{Type of Poisson bi-vector}                                                                                   & \multicolumn{1}{c}{Orbit $\mathcal{O}$ type} & $\dim{\mathcal{O}}$      \\ \hline
\multicolumn{3}{|c|}{$\rk{B^{12}_p} = 3$}                                                                                       & \multicolumn{1}{l|}{affine subspace}         & \multicolumn{1}{c|}{$6$} \\ \hline
\multicolumn{1}{|c|}{\multirow{4}{*}{$\rk{B^{12}_p} = 2$}} & \multirow{2}{*}{$\rk{D} = 2$} & \multicolumn{1}{c|}{$\det{D} < 0$} & \multicolumn{1}{l|}{$\R^2 \times$hyperbolic paraboloid}   & \multicolumn{1}{c|}{$4$} \\ \cline{4-5}
\multicolumn{1}{|c|}{}                                     &                               & \multicolumn{1}{c|}{$\det{D} > 0$} & \multicolumn{1}{l|}{$\R^2 \times$elliptic paraboloid}     & \multicolumn{1}{c|}{$4$} \\ \cline{2-5}
\multicolumn{1}{|c|}{}                                     & \multicolumn{2}{c|}{$\rk{D} = 1$}                                  & \multicolumn{1}{l|}{parabolic cylinder}      & \multicolumn{1}{c|}{$4$} \\ \cline{2-5}
\multicolumn{1}{|c|}{}                                     & \multicolumn{2}{c|}{$\rk{D} = 0$}                                  & \multicolumn{1}{l|}{affine subspace}         & \multicolumn{1}{c|}{$4$} \\ \hline
\multicolumn{1}{|c|}{\multirow{2}{*}{$\rk{B^{12}_p} = 1$}} & \multicolumn{2}{c|}{$\rk{B^{11}_p|_{\h_1}} = 2$}                   & \multicolumn{1}{l|}{affine subspace}         & \multicolumn{1}{c|}{$4$} \\ \cline{2-5}
\multicolumn{1}{|c|}{}                                     & \multicolumn{2}{c|}{$\rk{B^{11}_p|_{\h_1}} = 0$}                   & \multicolumn{1}{l|}{affine subspace}         & \multicolumn{1}{c|}{$2$} \\ \hline
\multicolumn{1}{|c|}{\multirow{2}{*}{$\rk{B^{12}_p} = 0$}} & \multicolumn{2}{c|}{$\rk{B^{11}_p} = 2$}                           & \multicolumn{1}{l|}{affine subspace}         & \multicolumn{1}{c|}{$2$} \\ \cline{2-5}
\multicolumn{1}{|c|}{}                                     & \multicolumn{2}{c|}{$\rk{B^{11}_p} = 0$}                           & \multicolumn{1}{l|}{point}                   & \multicolumn{1}{c|}{$0$} \\ \hline
\end{tabular}
\caption{\label{tb-orbits-3-3}Coadjoint orbits of free Carnot group of step 3 and rank 3.}
\end{table}

\end{example}

\section{\label{sec-s4}Free Carnot groups of step 4}

Let us describe Casimir functions for free Carnot groups of step 4.
By formula~\eqref{eq-grad-component-dimension} we have $\dim{\g_3} = (r^3 - r)/3$.
We see that $\dim{\g_3} \geqslant r$ for $r \geqslant 2$.
So, as we know from Theorem~\ref{th-casimirs-r-s} there are linear Casimir functions corresponding to $\g_4$ and Casimir functions constructed from the subspace $\Ker{B^{13}_p}$,
where the map $B^{13}_p : \g_3 \rightarrow \g_1^*$ is such that $B^{13}_p(\eta) = B_p(\, \cdot \, , \eta)$ for $\eta \in \g_3$.

Consider the map $B^{(12)3}_p : \g_3 \rightarrow (\g_1 \oplus \g_2)^*$, where $B^{(12)3}_p = B^{13}_p\oplus 0$.
For almost all $p \in \g^*$ we have $\Image{B^{(12)3}_p} = \g_1^*$, then $\g_2 = (\Image{B^{(12)3}_p})^{\circ}$.
Next, for $\gamma \in \Ker{B_p|_{\g_2}}$ we have $B_p(\gamma) \in \Image{B^{(12)3}_p}$, as in Proposition~\ref{prop-preimageB12}.
Moreover, the coordinates of the vector $B_p(\gamma)$ are in the subspace $[\g_1, \gamma] \subset \g_3$ and the coefficients of the map $B_p|_{\g_2}$ are in the subspace
$[\g_2, \g_2] \subset \g_4$. So, we can construct a quadratic Casimir function in a way similar to Theorem~\ref{th-coadjoint-orbits}.
We obtain the following theorem.

\begin{theorem}
The full system of Casimir functions for a free Carnot group of step \emph{4} consists of\\
\emph{(1)} linear functions of the form $\g_4$\emph{;} \\
\emph{(2)} functions constructed with the help of Proposition~\emph{\ref{prop-linearCasimirs}} from the subspace $\Ker{B^{13}_p}$,
these functions are linear on joint level sets of functions~\emph{(1)}\emph{;}\\
\emph{(3)} functions of the form $\widehat{I}_{\gamma} - \widehat{Q}_{\gamma}$, where $\gamma \in \Ker{B_p|_{\g_2}}$, $\eta \in (B^{13}_p)^{-1}B_p(\gamma)$ and
$\widehat{I}_{\gamma}$, $\widehat{Q}_{\gamma}$ are defined as in Section~\emph{\ref{sec-coadjoint-orbits}},
these functions are quadratic on joint level sets of functions~\emph{(1, 2)}.
\end{theorem}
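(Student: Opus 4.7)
The plan is to treat the three listed types of functions in sequence and then establish completeness by computing $\dim{\Ker{B_p}}$ at a generic point $p \in \g^*$.

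Type~(1) is immediate from Remark~\ref{rem-linearCasimirs}. For type~(2) the dimension inequality $\dim{\g_3} = (r^3 - r)/3 \geqslant r$ holds for $r \geqslant 2$, so Theorem~\ref{th-casimirs-r-s} applies with $s = 4$, $\g_{s-1} = \g_3$, $\g_s = \g_4$, and yields $\dim{\g_3} - r$ Casimir functions obtained from $\Ker{B^{13}_p}$ via Propositions~\ref{prop-linearCasimirs} and~\ref{prop-Kerb-is-linearCasimir}; these are linear on joint level sets of the linear Casimirs coming from $\g_4$.

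For type~(3) I would mirror the construction of Section~\ref{sec-coadjoint-orbits}, shifted up by one grading. Given $\gamma \in \Ker{B_p|_{\g_2}}$, the functional $B_p(\,\cdot\,, \gamma)$ vanishes on $\g_2$ by hypothesis, on $\g_3$ because $[\g_3, \g_2] \subset \g_5 = 0$, and on $\g_4$ because $\g_4$ is central; hence it is concentrated on $\g_1$ and, by generic surjectivity of $B^{13}_p$ (Proposition~\ref{prop-b-is-surj} with $s = 4$), lies in $\Image{B^{13}_p}$ --- the analog of Proposition~\ref{prop-preimageB12}. Choosing $\eta \in (B^{13}_p)^{-1}B_p(\gamma)$, the self-adjointness, bilinear-form and quadratic-extension arguments of Propositions~\ref{prop-D-is-symmetric}, \ref{prop-bilinear-part}, \ref{prop-eta-gives-quadratic} carry over verbatim: their only input is the vanishing $\{\gamma, \{\xi, \zeta\}\} = 0$ for $\xi, \zeta \in \g_1$, which holds here because $\{\xi, \zeta\} \in \g_2$ and $\gamma \in \Ker{B_p|_{\g_2}}$. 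After clearing a common denominator polynomial in $\g_4$, the elements $\gamma$ and $\eta$ become polynomial in $p$, so $\widehat{I}_\gamma - \widehat{Q}_\gamma$ is a polynomial on $\g^*$. The Casimir identity $\{\xi, \widehat{I}_\gamma - \widehat{Q}_\gamma\}(p) = B_p(\xi, \gamma) - B^{13}_p(\xi, \eta) = 0$ for $\xi \in \g_1$ is then the same computation as in Theorem~\ref{th-coadjoint-orbits}, the auxiliary brackets with $I_\gamma(p)$ and $Q_\gamma(p)$ vanishing because their arguments sit in $\g_4$ or in $\g_3 \oplus \Ker{D}$, both of which are Poisson-central globally (not merely on a fixed stratum, as in the step-3 case --- this is why in step~4 these functions are honest Casimirs rather than orbit markers).

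For completeness, I would compute $\dim{\Ker{B_p}}$ at a generic $p$ by decomposing $\lambda = \lambda_1+\lambda_2+\lambda_3+\lambda_4 \in \Ker{B_p}$ with $\lambda_i \in \g_i$. The only nonzero blocks of $B_p$ couple $\g_1 \otimes \g_1$, $\g_1 \otimes \g_2$, $\g_1 \otimes \g_3$ and $\g_2 \otimes \g_2$; peeling these off layer by layer (in the order $\lambda_4$ free, $\lambda_1 \in \Ker{(B^{13}_p)^*}$, $\lambda_2 \in \Ker{B_p|_{\g_2}}$ modulo a preimage constraint, $\lambda_3 \in (B^{13}_p)^{-1}(\,\cdot\,)$ modulo $\Ker{B^{13}_p}$) gives a codimension formula analogous to~\eqref{eq-codim-O}. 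I expect this bookkeeping to be the main obstacle: one has to verify that each layer's contribution matches exactly one of the function types (1)--(3) without hidden dependencies from the Jacobi identity, a subtlety already visible in the proof of Theorem~\ref{th-casimirs-r-3} and in Example~\ref{ex-33}.
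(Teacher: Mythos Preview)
Your plan follows the paper's approach exactly: the paper's proof consists of one sentence pointing back to Theorem~\ref{th-coadjoint-orbits}, plus the observation that in step~4 the analog of $\h_2$ is $[\g_2,\g_2]\subset\g_4$ (hence genuinely central) and that the $\Ker D$-contribution to $L_\gamma$ is what forces the type-(3) functions to be merely quadratic on level sets of~(1,2).

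Two small corrections to tighten the write-up. First, ``$\g_3\oplus\Ker D$'' should read ``$\g_4\oplus\Ker D$'' (with $\Ker D\subset\g_3$); as written, $\g_3$ is certainly not Poisson-central in step~4. Second, $\Ker D$ is not Poisson-central \emph{globally} either: for $\nu\in\Ker D$ and $\xi\in\g_1$ one has $\{\xi,\nu\}=[\xi,\nu]\in\g_4$, which is a nonzero linear function in general. What is true --- and what your argument actually needs --- is that $\{\xi,\nu\}(p)=0$ at the point $p$ under consideration, by the very definition of $\Ker D$; equivalently, $\nu$ is central on the $\g_4$-level set through $p$. This is exactly the content of Proposition~\ref{prop-KerD} transported to step~4, and it is why the paper phrases the conclusion as ``quadratic on joint level sets of functions~(1,2)'' rather than as a raw degree bound: the $\Ker D$-variables appearing in $L_\gamma$ are themselves type-(2) Casimirs once the $\g_4$-values are fixed. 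With these two fixes your argument is complete and matches the paper.
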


\begin{proof}
The proof is similar to the proof of Theorem~\ref{th-coadjoint-orbits}.
Note that in this case linear functions of the form $\h_2 = [\g_2, \g_2] \subset \g_4$ are constant on joint level sets of functions~(1).
Further, the values of the linear function $L_{\gamma} : \g^* \rightarrow S(\g_4 \oplus \Ker{D})$ are linear on variables from the space $\Ker{D}$.
This implies that the functions $\widehat{I}_{\gamma} - \widehat{Q}_{\gamma}$  are quadratic on joint level sets of functions~(1, 2).
\end{proof}

\begin{example}
\label{ex-2358}
Consider the free Carnot group of step 4 and rank 2. The corresponding Lie algebra has the following structure:
$$
\g_1 = \sspan{\{\xi_1, \xi_2\}}, \qquad \g_2 = \sspan{\{\xi_{12}\}}, \qquad \g_3 = \sspan{\{\xi_{112}, \xi_{212}\}},
$$
$$
\g_4 = \sspan{\{\xi_{1112}, \xi_{1212}, \xi_{2212}\}},
$$
$$
[\xi_1, \xi_2] = \xi_{12}, \qquad [\xi_1, \xi_{12}] = \xi_{112}, \qquad [\xi_2, \xi_{12}] = \xi_{212},
$$
$$
[\xi_1, \xi_{112}] = \xi_{1112}, \qquad [\xi_1, \xi_{212}] = [\xi_2, \xi_{112}] = \xi_{1212}, \qquad [\xi_2, \xi_{212}] = \xi_{2212}.
$$
It is easy to check (see~\cite{lokutsievskii-sachkov}) that there are three linear Casimir functions and a function that is quadratic on joint level sets of linear ones:
$$
\xi_{1112}, \qquad \xi_{1212}, \qquad \xi_{2212},
$$
\begin{equation*}
\textstyle
\xi_{12}(\xi_{1112}\xi_{2212} - \xi_{1212}^2) - {{1}\over{2}}\xi_{2212}\xi_{112}^2 - {{1}\over{2}}\xi_{1112}\xi_{212}^2 + \xi_{1212}\xi_{112}\xi_{212}.
\end{equation*}

Let us explain how can we get these Casimir functions. Here we obtain $\Ker{B^{13}_p} = 0$. Next, since $\g_2 = \sspan{\{\xi_{12}}\}$ the form $B_p|_{\g_2}$ is zero.
For $\gamma = \xi_{12}$ construct $\eta \in \g_3$ such that $B_p(\gamma) = B^{13}_p(\eta)$. It is easy to see that in the basis $\xi_{112}, \xi_{212}$ the coordinates of $\eta$ are
$$
\frac{\xi_{112}\xi_{2212} - \xi_{212}\xi_{1212}}{\xi_{1112}\xi_{2212}-\xi_{1212}^2}, \qquad
\frac{\xi_{212}\xi_{1112} - \xi_{112}\xi_{1212}}{\xi_{1112}\xi_{2212}-\xi_{1212}^2}.
$$
Multiply $\eta$ and $\gamma$ by the common denominator of these two fractions.
Then $\widehat{I}_{\gamma} = \xi_{12} (\xi_{1112}\xi_{2212}-\xi_{1212}^2)$.
Multiply each coordinate $\eta$ by the corresponding basis vector and take the sum of these products.
Dividing the resulting quadratic expression (with respect to the variables $\xi_{112}$, $\xi_{212}$) by 2 we get the function $\widehat{Q}_{\eta}$.
\end{example}

Consider now the special case when $\Image{B^{13}_p} \neq \g_1^*$. A joint level set of functions~(1--3) consists of coadjoint orbits of low dimensions.
The method of Section~\ref{sec-coadjoint-orbits} does not work for the task of description of these coadjoint orbits.
Indeed, the subspace $\h_1 = (\Image{B^{13}_p})^{\circ}$ contains elements of the subspace $\g_1$.
Consequently, the coefficients of the map $B_p|_{\h_1}$ depend on elements of the subspace $[\g_1, \g_1] = \g_2$ and these elements are not constant
(here we do not have an analog of Proposition~\ref{prop-h2-are-constant-on-O}).

Similar difficulties appear in the case of free Carnot group of step $s \geqslant 5$ even for description of general coadjoint orbits.

\section{\label{sec-control}Application to control theory}

Here we discuss an application of Theorem~\ref{th-coadjoint-orbits} to a behavior of extremal controls in time-optimal left-invariant control problems.

Let $G$ be a three-step free Carnot group of rank $r$.
The corresponding Lie algebra $\g$ is generated by $\g_1 = \sspan{\{\xi_1,\dots,\xi_r\}}$.
Consider the left-invariant vector fields $X_i(g) = dL_g \xi_i$ for $i=1,\dots,r$, where $L_g$ is a left-shift by an element $g \in G$.
Assume that $U \subset \R^r$ is a convex compact set containing the origin in its interior. Consider the following time-optimal problem:
\begin{equation}
\label{eq-control}
\begin{array}{l}
\dot{g} = \sum\limits_{i=1}^{r}{u_iX_i(g)}, \qquad g \in G, \quad u = (u_1,\dots,u_r) \in U, \\
g(0) = \id, \quad g(t_1) = g_1 \in G, \\
t_1 \rightarrow \min.\\
\end{array}
\end{equation}
Note that if $U = -U$ we obtain a sub-Finsler problem~\cite{barilari-boscain-ledonne-sigalotti,berestovskii}. In particular, if $U$ is an ellipsoid we have a sub-Riemannian problem~\cite{agrachev-barilary-boscain}.

The Pontryagin maximum principle~\cite{pontryagin,argachev-sachkov} gives necessary conditions of optimality. Normal extremal trajectories are projections of trajectories of a Hamiltonian vector field $\vec{H}$ on $T^*G$ called \emph{extremals}. The corresponding Hamiltonian reads as
$$
H(h_1,\dots,h_r) = \max\limits_{v \in U}{\sum\limits_{i=1}^r{v_ih_i}},
$$
where the functions $h_i = \langle\,\cdot\, , X_i\rangle$ are linear on the fibers of the cotangent bundle $T^*G$.
Any extremal is defined by its initial point at $T^*_{\id}G = \g^*$ called \emph{an initial momentum}.
The vertical part of the Hamiltonian vector field is determined by the equation
\begin{equation}
\label{eq-vert-subsystem}
\dot{h} = \{H, h\}.
\end{equation}

The simplest integrable case is the case of an extremal with an initial momentum lying on a two-dimensional coadjoint orbit.
Indeed, this coadjoint orbit is an invariant sub-manifold of equation~\eqref{eq-vert-subsystem} and the Hamiltonian $H$ is a first integral of this equation.
It turns out that two-dimensional coadjoint orbits of the Lie group $G$ are arranged the same way as coadjoint orbits of some simplest Carnot groups.
Let us recall the corresponding definitions.

\begin{definition}
\label{def-heisenberg}
The free nilpotent Lie algebra of rank 2 and step 2 is called \emph{the Heisenberg algebra}. The corresponding connected and simply connected Lie group $H_3$ is called \emph{the Heisenberg group}.
\end{definition}

\begin{remark}
\label{rem-heisenberg-orbits}
The Heisenberg algebra is the simplest nilpotent Lie algebra of rank 2. It is spanned by the elements
$\zeta_1$, $\zeta_2$, $\zeta_{12} = [\zeta_1, \zeta_2]$.

Affine planes $\zeta_{12} = \const \neq 0$ are two-dimensional coadjoint orbits of the group $H_3$. Points of the plane $\zeta_{12} = 0$ are zero-dimensional coadjoint orbits.
\end{remark}

\begin{definition}
\label{def-engel}
\emph{The Engel algebra} is a Lie algebra that is spanned by
$$
\zeta_1, \qquad \zeta_2, \qquad \zeta_{12} = [\zeta_1, \zeta_2], \qquad \zeta_{112} = [\zeta_1, \zeta_{12}].
$$
Other commutators of these basis elements equal zero.
The corresponding connected and simply connected Lie group $E$ is called \emph{the Engel group}.
\end{definition}

\begin{remark}
\label{rem-engel-orbits}
The Engel algebra is the simplest nilpotent Lie algebra of rank 3, but it is not free.
Coadjoint orbits of the group $E$ can be two-dimensional or zero-dimensional.
There are two types of two-dimensional coadjoint orbits:
parabolic cylinders $\zeta_2\zeta_{112} - {{1}\over{2}}\zeta_{12}^2 = \const$ for $\zeta_{112} \neq 0$ and
affine planes $\zeta_{112} = 0$, $\zeta_{12} = \const \neq 0$.
Points of the plane $\zeta_{112} = \zeta_{12} = 0$ are zero-dimensional coadjoint orbits.
See~\cite{ardentov-sachkov-engel} for details.
\end{remark}

\begin{theorem}
\label{th-2dim-trajectories}
Consider a two-dimensional coadjoint orbit $(\Ad^*{G})p \subset \g^*$.
There exists an invariant affine subspace $(\Ad^*{G})p \subset \mathcal{A} \subset \g^*$ and a Lie group $L$ acting on $\mathcal{A}$ such that\\
\emph{(1)} the orbits of the group $L$ on $\mathcal{A}$ coincide with coadjoint orbits of the group $G$;\\
\emph{(2)} the action of the group $L$ on $\mathcal{A}$ is isomorphic to the coadjoint action of the group $L$;\\
\emph{(3)} if $B^{12}_p = 0$, then $L \simeq H_3$;
           if $B^{12}_p \neq 0$, then $L \simeq E$.
\end{theorem}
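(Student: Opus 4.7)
The plan is to (i) determine exactly when a two-dimensional coadjoint orbit arises using Remark~\ref{rem-dim-of-coadjoint-orbits}, (ii) in each of the two resulting cases construct an $\Ad^*(G)$-invariant affine subspace $\mathcal{A}$ of the appropriate dimension with a linear identification $\mathcal{A} \cong \mathfrak{l}^*$, and (iii) check that the restricted $G$-action on $\mathcal{A}$ becomes the coadjoint action of $L$.

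First I would carry out the case split. Writing $\dim\orbit = 2r - (k_1 + k_2) = 2$ with $k_2 \leq k_1 \leq r$ forces exactly two possibilities: (A) $k_1 = r$ and $k_2 = r-2$, which is equivalent to $B^{12}_p = 0$ together with $\rk B^{11}_p|_{\g_1} = 2$; or (B) $k_1 = k_2 = r-1$, equivalent to $\rk B^{12}_p = 1$ together with $B^{11}_p|_{\h_1} = 0$. Since $B^{12}_p$ depends only on $p|_{\g_3}$, the dichotomy $B^{12}_p = 0$ versus $B^{12}_p \neq 0$ in the statement matches this split precisely, and case (A) is the stratum where all linear Casimirs from $\g_3$ vanish at $p$.

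In Case (A) I would pick $\zeta_1, \zeta_2 \in \g_1$ complementary to $\kk = \Ker B^{11}_p|_{\g_1}$ with $p([\zeta_1,\zeta_2]) \neq 0$, and set $\zeta_{12} := [\zeta_1, \zeta_2]$. Take $\mathcal{A}$ to be the 3-dimensional affine subspace with free coordinates $(\zeta_1, \zeta_2, \zeta_{12})$, defined by fixing all of $\g_3^*$ to $0$, all $\g_2^*$-components transverse to $\R\zeta_{12}$ to their $p$-values (chosen so that $q$ vanishes on $[\kk, \g_1]$ throughout $\mathcal{A}$, which keeps the kernel $\kk$ stable), and the $\kk$-components of $\g_1$ to their $p$-values. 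Then on $\mathcal{A}$ the only nontrivial bracket among the coordinate functions is $\{\zeta_1, \zeta_2\} = \zeta_{12}$ (since $\{\zeta_i, \zeta_{12}\} \in \g_3$ vanishes on $\mathcal{A}$), while the Hamiltonian vector fields of $\gamma \in \kk$ and of $\g_2$-elements transverse to $\R\zeta_{12}$ vanish on $\mathcal{A}$; this exhibits the restricted $G$-action as the coadjoint action of $H_3$ and yields (1)--(3).

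In Case (B) I would pick $\zeta_1 \in \g_1 \setminus \h_1$ and $\zeta_2 \in \h_1$ with $\zeta_{12} := [\zeta_1, \zeta_2] \notin \Ker B^{12}_p$, and set $\zeta_{112} := [\zeta_1, \zeta_{12}] \in \g_3$. The hypotheses yield $p(\zeta_{112}) = B^{12}_p(\zeta_{12})(\zeta_1) \neq 0$ and $p([\zeta_2, \zeta_{12}]) = B^{12}_p(\zeta_{12})(\zeta_2) = 0$ (because $\zeta_2 \in \h_1 = (\Image B^{12}_p)^\circ$), matching the defining relations of the Engel algebra. I take $\mathcal{A}$ to be the 4-dimensional affine subspace with free coordinates $(\zeta_1, \zeta_2, \zeta_{12}, \zeta_{112})$, fixing the remaining linear $\g_3$-coordinates, the type-(2) Casimirs from $\Ker B^{12}_p$, and the $\g_1 \oplus \g_2$ coordinates transverse to $\sspan\{\zeta_1, \zeta_2\} \oplus \R\zeta_{12}$. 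Identifying $\mathcal{A}$ with the Engel coalgebra through these coordinates, one obtains the Poisson relations $\{\zeta_1, \zeta_2\} = \zeta_{12}$, $\{\zeta_1, \zeta_{12}\} = \zeta_{112}$, $\{\zeta_2, \zeta_{12}\} = 0$, with higher brackets zero, which is the Engel coadjoint structure.

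The main obstacle will be verifying in Case (B) that $\{\zeta_2, \zeta_{12}\}$ really vanishes everywhere on $\mathcal{A}$ and not only at $p$: this bracket is a linear function on $\g_3$, so the content is to choose $\zeta_2 \in \h_1$ so that $[\zeta_2, \zeta_{12}]$ is one of the $\g_3$-directions whose value is pinned to $0$ by the defining conditions of $\mathcal{A}$, independent of $\zeta_{112}$ and of the other fixed $\g_3$-directions. This can be arranged by making the choices of $\zeta_1, \zeta_2$ inside a Hall basis along the lines of Proposition~\ref{prop-exist-p-full-rank}, so that $\zeta_{112}$ and $[\zeta_2, \zeta_{12}]$ are linearly independent Hall-basis elements. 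Once this propagation is secured, a dimension count shows the $L$-orbits on $\mathcal{A}$ exhaust the $G$-orbits contained in $\mathcal{A}$, and matching with Remarks~\ref{rem-heisenberg-orbits} and~\ref{rem-engel-orbits} completes (1)--(3).
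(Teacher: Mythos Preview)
Your approach is essentially the paper's: case-split via $\dim\orbit = 2r-(k_1+k_2)=2$ into $(k_1,k_2)=(r,r-2)$ and $(k_1,k_2)=(r-1,r-1)$, then in each case carve out a 3- or 4-dimensional affine slice $\mathcal{A}$ and verify that its induced Poisson structure is the Heisenberg or Engel coalgebra. The obstacle you highlight in Case~(B) --- that $\{\zeta_2,\zeta_{12}\}=[\zeta_2,\zeta_{12}]$ must vanish identically on $\mathcal{A}$, not just at $p$ --- is real, and your resolution (linear independence of $\zeta_{112}$ and $[\zeta_2,\zeta_{12}]$ in the free algebra, so the latter sits among the pinned $\g_3$-coordinates, pinned to its $p$-value $0$) is correct.

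There is, however, a second invariance issue in Case~(B) that you do not flag. For $\mathcal{A}$ to be $\Ad^*(G)$-invariant, the $\g_1$-coordinates you freeze (those transverse to $\sspan\{\zeta_1,\zeta_2\}$) must themselves be constant along every $G$-orbit through $\mathcal{A}$. For $\gamma$ in that complement and $\xi\in\g_1$ one has $\{\xi,\gamma\}(q)=q([\xi,\gamma])$ with $[\xi,\gamma]\in\g_2$; since $\zeta_{12}$ is free on $\mathcal{A}$, this forces both $p([\xi,\gamma])=0$ for all $\xi$ and $[\xi,\gamma]$ to have no $\zeta_{12}$-component. The first condition says $\gamma\in\Ker B^{11}_p$, so the chosen complement must lie inside $\Ker B^{11}_p$. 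Your requirement $\zeta_{12}\notin\Ker B^{12}_p$ is a condition on $p|_{\g_3}$ and does not prevent $\zeta_2\in\Ker B^{11}_p$; when that happens with $\rk B^{11}_p=2$, the $(r-2)$-dimensional $\Ker B^{11}_p$ meets $\sspan\{\zeta_1,\zeta_2\}$ nontrivially and no linear complement works. The paper handles this by choosing $\zeta_1,\zeta_2$ so that $B^{11}_p$ is nondegenerate on their span (equivalently $p(\zeta_{12})\neq 0$), which forces the complement to equal $\Ker B^{11}_p$. Adding that requirement to your choice of $\zeta_2$ closes the gap; both open conditions on $\zeta_2\in\h_1$ are compatible whenever $\rk B^{11}_p=2$, and when $B^{11}_p=0$ the issue disappears.
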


\begin{proof}
According to Remark~\ref{rem-dim-of-coadjoint-orbits} if $r \geqslant 3$, then the dimension of the coadjoint orbit equals $2r - (k_1+k_2) = (r-k_1) + (r-k_2)$.
This number is equal to 2 in the following cases.

1. Case $r - k_1 = 0$ and $r - k_2 = 2$. This means that $B^{12}_p = 0$ and $\rk{B^{11}_p} = 2$. So, functions~(3) in Theorem~\ref{th-coadjoint-orbits} are linear on $\W$.
In this case the coadjoint orbit is an affine subspace.

Let $\zeta_1, \zeta_2 \in \g_1$ be such that $B^{11}_p$ restricted to the subspace $\sspan{\{\zeta_1, \zeta_2\}}$ is not degenerate.
Let $\mathcal{A}$ be a joint level set of functions~(1--3) from Theorem~\ref{th-coadjoint-orbits}
of the forms $\g_3$, $\g_2 / \sspan{\{\zeta_{12}\}}$ and $\Ker{B^{11}_p|_{\g_1}}$, where $\zeta_{12} = [\zeta_1, \zeta_2]$.
Linear functions $\zeta_1, \zeta_2$ restricted to $\mathcal{A}$ generate (with the help of the Poisson bracket) the Heisenberg algebra.
The corresponding Heisenberg group $L = H_3$ acts on $\mathcal{A}$, this action is isomorphic to the coadjoint action of the group $H_3$.

2. Case $r - k_1 = 1$ and $r - k_2 = 1$. This means that $\dim{\h_1} = \dim{\g_1} - 1$ and $B^{11}_p|_{\h_1} = 0$.
It follows that $\rk{B^{11}_p} = 2$ for almost all $p \in \W$. Indeed, otherwise $\{\g_1, \g_1\}|_{\W} = 0$ and $B^{12}_p = 0$.

Let $\zeta_1, \zeta_2 \in \g_1$ be such that $\sspan{\{\zeta_1, \zeta_2\}}$ is an invariant subspace of $B^{11}_p$ and $B^{11}_p$ is not degenerate on it.
Then functions~(3) in Theorem~\ref{th-coadjoint-orbits} are linear on $\W$
or have the form $\zeta_2\zeta_{112} - {{1}\over{2}}\zeta_{12}^2$ (see their construction in Proposition~\ref{prop-eta-gives-quadratic}),
where $\zeta_{12} = [\zeta_1, \zeta_2]$ and $\zeta_{112} = [\zeta_1, \zeta_{12}]$.
So, the coadjoint orbit is a parabolic cylinder.

Let $\mathcal{A}$ be a joint level set of functions~(1--3) from Theorem~\ref{th-coadjoint-orbits}
of the forms $\g_3 / \sspan{\{\zeta_{112}\}}$, $\g_2 / \sspan{\{\zeta_{12}\}}$ and $\h_1 / \sspan{\{\zeta_2\}}$.
Note, that $\zeta_1|_{\mathcal{A}}$, $\zeta_2|_{\mathcal{A}}$ generate (with the help of the Poisson bracket) the Engel algebra.
The action of the corresponding Engel group $L = E$ on $\mathcal{A}$ is isomorphic to the coadjoint action of the group $E$.

3. Case $r - k_1 = 2$ and $r - k_2 = 0$. It follows that $r = k_2 > k_1 = r - 2$ in contradiction with $k_2 \leqslant k_1$.

It follows from the construction in cases~1--2 that the Poisson structure on the affine subspace $\mathcal{A}$ coincides with the Poisson structure on the Lie coalgebra corresponding to the group $L$. So, the orbits of the group $L$ on $\mathcal{A}$ coincide with coadjoint orbits of the group $G$ on $\mathcal{A}$.

It remains to consider the case $r = 2$. It is easy to see directly that in this case of the Cartan group (see Example~\ref{ex-Cartan-group}) we have the same situation as in cases~1--2 above.
\end{proof}

\begin{remark}
For the Engel group the normal extremal controls were investigated by A.\,A.~Ardentov and Yu.\,L.~Sachkov~\cite{ardentov-sachkov-engel} for a one-parametric family of sub-Finsler problems. They considered a square rotated by an arbitrary angle as an indicatrix of the sub-Finsler structure.
Note that in $l_{\infty}$ sub-Finsler problem on the Cartan group~\cite{ardentov-ledonne-sachkov} the phase portrait of the vertical subsystem splits to that one-parametric family of corresponding phase portraits on the Engel group.
\end{remark}

\begin{proposition}
Assume that the set of controls $U$ of problem~\emph{\eqref{eq-control}} is strictly convex.
Then any phase curve of the vertical subsystem~\emph{\eqref{eq-vert-subsystem}} with an initial momentum from a two-dimensional coadjoint orbit is a regular curve or a fixed point.
\end{proposition}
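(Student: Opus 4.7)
The plan is to combine the conservation of $H$ along the vertical flow with the reduction afforded by Theorem~\ref{th-2dim-trajectories}, using strict convexity of $U$ to guarantee enough regularity of the Hamiltonian. Since $\{H,H\}=0$, the Hamiltonian $H$ is a first integral of~\eqref{eq-vert-subsystem}, so every phase curve lies entirely in a single level set of the restriction $H|_{\orbit}$. On the two-dimensional symplectic leaf $\orbit$ the restricted vector field $\vec{H}|_{\orbit}$ vanishes at $p$ if and only if $d_{p}(H|_{\orbit})=0$, because the symplectic form on $\orbit$ is non-degenerate; this already gives the pointwise dichotomy that is to be upgraded to a statement about whole trajectories.

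Next, the strict convexity of $U$ ensures that the maximizer $v^{*}(h)\in U$ of $v\mapsto \langle v,h\rangle$ is unique for every $h\neq 0$, and hence the support function $H(h)=\max_{v\in U}\langle v,h\rangle$ is of class $C^{1}$ on $\g^{*}\setminus\{0\}$. Because any two-dimensional coadjoint orbit avoids the origin (the origin being a zero-dimensional orbit), $H|_{\orbit}$ is $C^{1}$ on $\orbit$, so its set of critical points is well defined and the vector field $\vec H|_{\orbit}$ is continuous.

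I would then apply Theorem~\ref{th-2dim-trajectories} to put $\orbit$ into one of two standard shapes. In the Heisenberg case $L\simeq H_{3}$ the orbit $\orbit$ is an affine plane not through the origin, and $H|_{\orbit}$ is the restriction of the coercive convex function $H$ to that affine subspace, hence itself a coercive convex $C^{1}$ function on $\orbit$. For such a function the zero set of the differential coincides with the (non-empty, convex, compact) set of global minimizers, so every non-minimal level set contains no critical points and is, by the implicit function theorem, a smooth regular $1$-submanifold on which $\vec{H}|_{\orbit}$ is nowhere zero; a phase curve lying on such a level set therefore has $\dot{h}\neq 0$ everywhere and is regular. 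A phase curve lying in the minimum level set is contained in the critical set, so $\vec{H}|_{\orbit}\equiv 0$ along it and it is a fixed point. In the Engel case $L\simeq E$ the orbit is a parabolic cylinder; here I would parametrize $\orbit$ by $(\zeta_{1},\zeta_{12})$ using $\zeta_{2}=(c+\tfrac{1}{2}\zeta_{12}^{2})/\zeta_{112}$ (cf.\ Remark~\ref{rem-engel-orbits}), carry out the analogous $C^{1}$ critical-point analysis for the pulled-back Hamiltonian on the cylinder, and invoke the explicit phase portraits of~\cite{ardentov-sachkov-engel} for verification.

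The main obstacle I expect is the Engel case, since the parabolic cylinder is not an affine subspace and the convexity of $H$ does not transfer literally to $H|_{\orbit}$, so a priori saddle-type critical points could appear. My plan for this is to combine coerciveness of $H$ on $\g^{*}$ (which keeps the critical set of $H|_{\orbit}$ compact) with the observation that, for a continuous Hamiltonian vector field on a $2$-manifold preserving the continuous function $H|_{\orbit}$, an integral curve starting at a non-critical point stays in the regular part of its level set, where $\vec{H}|_{\orbit}$ is nowhere vanishing and the curve is therefore regular, while a curve starting at a critical point is the corresponding fixed point.
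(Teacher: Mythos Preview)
Your first two paragraphs already contain the essence of the paper's proof, and the paper in fact stops there. Its argument is short and direct: strict convexity of $U$ makes $H\in C^{1}(\R^{r}\setminus\{0\})$; write the vertical system as $\dot p=-B_{p}\nabla H$; normalize to $H(p)=1$ by homogeneity; then, at each point $p$, either $\nabla H\notin\Ker B_{p}$, in which case $\nabla H$ is transversal to $H^{-1}(1)$ while $\Ker B_{p}$ is transversal to the orbit, so $\Gamma=H^{-1}(1)\cap\orbit$ is a regular curve in a neighbourhood of $p$, or $\nabla H\in\Ker B_{p}$ and $p$ is stationary. That pointwise/local dichotomy is all the proposition asserts; the paper does not attempt to rule out a phase curve that is regular on an interval and then reaches a fixed point.

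Your attempt to upgrade this to a global dichotomy along entire trajectories, by invoking Theorem~\ref{th-2dim-trajectories} and treating the Heisenberg and Engel cases separately, goes beyond what is being claimed and is not how the paper proceeds at this point. The Heisenberg/Engel reduction appears only in the subsequent Corollaries~\ref{crl-periodic-or-constant-extremal-control}--\ref{crl-periodic-constant-or-asymptotically-constant-extremal-control}, and precisely the obstacle you flag in the Engel case (possible saddle-type critical points, a trajectory reaching a stationary point in finite time) is dealt with there under the \emph{additional} hypothesis that $\partial U^{\circ}$ is $W^{2}_{\infty}$-smooth, via a Picard-type uniqueness argument. So the gap you acknowledge is real for the stronger global statement you are aiming at, but it is simply not relevant to the proposition as stated and proved in the paper.
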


\begin{proof}
The maximized Hamiltonian $H$ of the Pontryagin maximum principle is $C^1$-smooth on $\R^r \setminus \{0\}$.
The vertical part of the corresponding Hamiltonian system reads as
\begin{equation}
\label{eq-vertpart}
\dot{p} = - B_p \nabla H, \qquad p \in \g^*,
\end{equation}
where $B_p$ is the Poisson bi-vector on the coalgebra $\g^*$.

Let $p \in \g^*$ be such that the corresponding coadjoint orbit $(\Ad^*{G})p$ is two-dimensional.
Since problem~\eqref{eq-control} is homogeneous it is sufficient to assume that $H(p) = 1$.
Consider $\Gamma = H^{-1}(1) \cap (\Ad^*{G})p$.
Note that $\nabla H$ is a transversal to the surface $H^{-1}(1)$ and $\Ker{B_p}$ is a transversal subspace to the coadjoint orbit $(\Ad^*{G})p$.
There are two cases. If $\nabla H \notin \Ker{B_p}$, then $\Gamma$ is a regular curve in some neighbourhood of the point $p$.
If $\nabla H \in \Ker{B_p}$, then from equation~\eqref{eq-vertpart} we obtain that $p$ is a stationary point.
\end{proof}

As an extension of Yu.\,L.~Sachkov's results~\cite{sachkov-subfinsler-2-step,sachkov-two-step} for free Carnot groups of step 2 to step 3 we get Corollaries~\ref{crl-periodic-or-constant-extremal-control}--\ref{crl-summary-control}.

\begin{corollary}
\label{crl-periodic-or-constant-extremal-control}
Assume that the set of controls $U$ is strictly convex.
Consider a two-dimensional coadjoint orbit $(\Ad^*{G})p$ such that $B^{12}_p = 0$.
Extremal controls of problem~\emph{\eqref{eq-control}} corresponding to normal extremals with initial momenta in the orbit $(\Ad^*{G})p$
are periodic or constant.
\end{corollary}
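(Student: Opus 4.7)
The plan is to reduce the problem to the Heisenberg setting via Theorem~\ref{th-2dim-trajectories} and then exploit convexity and coercivity of the Hamiltonian $H$ together with the regularity proposition proved just above.

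First, since $B^{12}_p = 0$, Theorem~\ref{th-2dim-trajectories} places us in Case~1 of its proof: there is an invariant affine subspace $\mathcal{A} \supset \orbit = (\Ad^*G)p$ on which the action is isomorphic to the coadjoint action of the Heisenberg group $L = H_3$. By Remark~\ref{rem-heisenberg-orbits}, the orbit $\orbit$ is an affine plane of the form $\zeta_{12} = \const \neq 0$ with $\zeta_1, \zeta_2 \in \g_1$ chosen so that $B^{11}_p$ is nondegenerate on $\sspan\{\zeta_1,\zeta_2\}$. Along $\orbit$ only $\zeta_1, \zeta_2$ vary; all other linear functions of the form $\g_1$ are constant (they lie in $\Ker B^{11}_p|_{\g_1}$), as are the functions from $\g_2/\sspan\{\zeta_{12}\}$ and from $\g_3$.

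Second, the Hamiltonian $H(h_1,\dots,h_r) = \max_{v\in U}\sum v_i h_i$ is the support function of a convex compact set $U$ with $0 \in \interior U$. Hence $H$ is convex, positively homogeneous of degree one, and satisfies $H(h) \geqslant c\|h\|$ for some $c > 0$. Restricting to $\orbit$ we obtain a convex coercive function of the two coordinates $(\zeta_1, \zeta_2)$; by homogeneity of the Hamiltonian system we may reduce to the level $H = 1$, and so
\[
\Gamma \;=\; H^{-1}(1) \cap \orbit
\]
is a compact convex Jordan curve in the plane $\orbit$ (or, in the degenerate case when $H|_\orbit$ attains its minimum on $\orbit$, a single point).

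Third, the vertical field $\vec{H}$ preserves both $\orbit$ (coadjoint invariance) and $H$, so every trajectory with initial momentum on $\orbit$ stays on $\Gamma$. By the preceding proposition the trajectory is either a stationary point (when $\nabla H \in \Ker B_p$) or a regular curve; a regular flow on a Jordan curve is periodic. Since $U$ is strictly convex, the maximum defining $H(h)$ is attained at a unique point and gives the extremal control $u(t) = \nabla H(h(t))$, where $h(t) = (\langle \xi_1, p(t)\rangle, \dots, \langle \xi_r, p(t)\rangle)$ depends continuously (in fact $C^0$) on $p(t)$. Periodicity of $p(t)$ therefore transfers to $u(t)$, which is either periodic or, in the stationary case, constant.

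The only nontrivial point in this plan is verifying that $\Gamma$ is genuinely a compact convex curve rather than, say, a convex region or an unbounded set. This reduces to the observation that the two free coordinates on $\orbit$ are precisely the restrictions of two linear functionals $\zeta_1, \zeta_2 \in \g_1$, so that $H|_\orbit$ is the restriction of a coercive convex function to a $2$-plane transverse to $\Ker B^{11}_p|_{\g_1}$; this transversality is exactly what the choice of $\zeta_1, \zeta_2$ in Case~1 of Theorem~\ref{th-2dim-trajectories} guarantees.
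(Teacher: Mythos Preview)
Your approach is essentially the same as the paper's: reduce via Theorem~\ref{th-2dim-trajectories} to the Heisenberg picture, intersect the affine-plane orbit with the level set $H=1$ to get a compact convex planar curve $\Gamma$, and conclude periodicity. However, there is one genuine gap.

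You invoke the preceding proposition to say that the trajectory through the initial momentum is either a stationary point or a regular curve, and then assert that ``a regular flow on a Jordan curve is periodic.'' But that proposition is a \emph{pointwise} dichotomy: it does not rule out the possibility that $\Gamma$ contains \emph{some} stationary points of the flow while the initial momentum is not one of them. In that situation the trajectory would approach a stationary point asymptotically and the control would be neither periodic nor constant---exactly the behaviour you are trying to exclude. Your parenthetical ``(or, in the degenerate case \dots\ a single point)'' does not resolve this: when $\min H|_\orbit = 1$ the set $\Gamma$ need not be a single point (the restriction $H|_\orbit$ is convex but not strictly convex), and in any case the issue is the location of stationary points on $\Gamma$, not the shape of $\Gamma$.

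The paper closes this gap with a short convexity argument you should add: a point $q \in \orbit$ is stationary iff $\nabla H(q) \in \Ker B_q$, which (since $\Ker B_q$ is transversal to $\orbit$) means $q$ is a critical point of $H|_\orbit$, hence a global minimum of the convex function $H|_\orbit$. Thus either $\min H|_\orbit = 1$ and \emph{every} point of $\Gamma$ is stationary (control constant), or $\min H|_\orbit < 1$ and \emph{no} point of $\Gamma$ is stationary, so the flow on the compact curve $\Gamma$ has no fixed points and every trajectory is periodic. With this dichotomy inserted, your argument is complete and matches the paper's.
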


\begin{proof}
The proof is quite similar to the proof in the case of free Carnot group of step 2, for details see~\cite{sachkov-two-step}.

From the proof of Theorem~\ref{th-2dim-trajectories} we know that the orbit $\orbit = (\Ad^*{G})p$ is an affine subspace (see case~1).
Let $\mathcal{L}$ be a joint level set of linear Casimir functions that correspond to elements of the type $\g_2 \oplus \g_3$.
It is sufficient to consider extremals with initial momenta on the level surface $\mathcal{C} = \{p \in \mathcal{L} \subset \g^* \ | \ H(p) = 1 \}$
that is the polar set for the set of controls $U$. The surface $\mathcal{C}$ is convex and compact.
Consider the set $\Gamma = \mathcal{C} \cap \orbit$, where $\orbit$ is a two-dimensional coadjoint orbit.

We claim that either any point of $\Gamma$ is a stationary point of equation~\eqref{eq-vertpart} or $\Gamma$ is a regular curve without stationary points.
Indeed, consider the problem of minimization for the $C^1$-smooth function $H$ on the affine plane $\orbit$.
Since $\nabla H \in \Ker{B_p}$ is necessary and sufficient condition for minimum in this case, then
this condition should be satisfied or unsatisfied for all points of $\Gamma$ simultaneously.

If for some point $p \in \Gamma$ we have $\nabla H \in \Ker{B_p}$, then every point of $\Gamma$ is a stationary point of equation~\eqref{eq-vertpart}.

If $\nabla H \notin \Ker{B_p}$ for some point $p \in \Gamma$, then the set $\Gamma$ is a convex planar curve without stationary points of equation~\eqref{eq-vertpart}.
In this case the solution is periodic.

Consequently, the corresponding extremal control is constant or periodic since $u = \nabla H$, see~\cite{rockafellar}.
\end{proof}

\begin{corollary}
\label{crl-periodic-constant-or-asymptotically-constant-extremal-control}
Assume that the set of controls $U$ is strictly convex and the boundary of the polar for the set of controls $\partial U^{\circ}$ is $W^2_{\infty}$-smooth.
Consider a two-dimensional coadjoint orbit $(\Ad^*{G})p$ such that $B^{12}_p \neq 0$.
Extremal controls of problem~\emph{\eqref{eq-control}} corresponding to normal extremals with initial momenta in the orbit $(\Ad^*{G})p$
are periodic, constant or asymptotically constant (have constant limits at infinity).
\end{corollary}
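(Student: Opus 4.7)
The plan is to combine the reduction of Theorem~\ref{th-2dim-trajectories} with a phase-portrait analysis that generalizes the proof of Corollary~\ref{crl-periodic-or-constant-extremal-control} from the affine-plane case to the parabolic-cylinder case. Since $B^{12}_p\neq 0$, Theorem~\ref{th-2dim-trajectories} provides an invariant affine subspace $\mathcal{A}\supset \orbit = (\Ad^*G)p$ together with an Engel group action whose orbits in $\mathcal{A}$ coincide with coadjoint orbits of $G$; by Remark~\ref{rem-engel-orbits} the orbit $\orbit$ is then either a two-dimensional affine plane or a parabolic cylinder, equipped with the Kirillov--Kostant symplectic form, and the restricted Hamiltonian $\tilde H = H|_{\mathcal{A}}$ generates the Hamiltonian flow that agrees with equation~\eqref{eq-vertpart} on $\orbit$.

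Next I would restrict attention to $\Gamma = \tilde H^{-1}(1)\cap \orbit$: this set is invariant under the flow, and because $U$ is strictly convex with $0$ in its interior, the support function $H$ is coercive on $\g_1^*$, so $\tilde H$ is coercive on $\mathcal{A}$ and $\Gamma$ is compact even when $\orbit$ itself is a non-compact cylinder. The assumption that $\partial U^\circ$ is $W^2_\infty$-smooth is equivalent, via the polar-duality calculus for convex bodies, to $\nabla H$ being Lipschitz on $\g_1^*\setminus\{0\}$; hence the vector field $-B_p\nabla H$ is locally Lipschitz on $\orbit$ and trajectories of equation~\eqref{eq-vertpart} starting in $\Gamma$ exist, are unique, and stay in $\Gamma$ for all $t\in\R$. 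Stationary points of the flow are exactly the points of $\Gamma$ at which $\nabla H\in \Ker B_p$, i.e.\ the critical points of $\tilde H|_{\orbit}$ on $\Gamma$.

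If $\Gamma$ has no stationary points, each of its connected components is a $C^1$ closed curve and the flow is periodic, giving a periodic extremal control as in Corollary~\ref{crl-periodic-or-constant-extremal-control}; if all of $\Gamma$ consists of stationary points, the control $u=\nabla H$ is constant. In the remaining situation, which can only occur when $\orbit$ is a parabolic cylinder, $\Gamma$ splits as a finite disjoint union of isolated critical points and $C^1$ arcs joining them, and on each such arc the flow is a monotone one-dimensional flow on a compact segment whose trajectories converge to the endpoint critical points as $t\to\pm\infty$; since $u(t)=\nabla H(p(t))$ and $\nabla H$ is continuous, the control $u(t)$ admits limits at $\pm\infty$, so it is asymptotically constant.

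The main technical point I expect to require care is the isolation of the critical set of $\tilde H|_{\orbit}$ on $\Gamma$: a continuum of critical points would force a whole arc of $\partial U^\circ$ to be tangent to a suitable projection of $\orbit$, which would contradict the strict convexity of $U$ together with the $C^{1,1}$ regularity provided by the $W^2_\infty$ assumption, whence the one-dimensional monotone-flow argument in the previous paragraph genuinely applies. The qualitative phase portrait one obtains coincides with the one worked out for the Engel group in~\cite{ardentov-sachkov-engel}, which I would invoke to confirm that the three listed behaviors are the only ones that can occur.
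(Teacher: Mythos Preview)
Your overall strategy matches the paper's: reduce via Theorem~\ref{th-2dim-trajectories} to the Engel picture on an invariant affine slice $\mathcal{A}$, use the $W^2_\infty$ hypothesis to obtain a Lipschitz vector field so that trajectories cannot reach equilibria in finite time, and then read off the three possible behaviours from the one-dimensional phase portrait. The paper compresses the last two steps by invoking Proposition~4 of~\cite{ardentov-lokutsievskiy-sachkov} applied to the two-dimensional section $U^{\circ}\cap\mathcal{A}$, whereas you carry them out by hand; that is a presentational difference, not a substantive one.

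There is, however, a genuine error in your compactness argument. The Hamiltonian $H$ depends only on the coordinates $h_1,\dots,h_r$ dual to $\g_1$, so $\tilde H=H|_{\mathcal{A}}$ is \emph{not} coercive on $\mathcal{A}$: the level set $\tilde H^{-1}(1)$ is a cylinder over $\partial U^{\circ}\cap\mathcal{A}$ in the $\zeta_{12}$ (and $\zeta_{112}$) directions, exactly as the paper notes. Compactness of $\Gamma=\tilde H^{-1}(1)\cap\orbit$ is nevertheless true, but for a different reason: on the parabolic cylinder $\zeta_2\zeta_{112}-\tfrac12\zeta_{12}^2=\const$ the coordinate $\zeta_{12}$ is controlled by $\zeta_2$, and $(\zeta_1,\zeta_2)$ is confined to the compact curve $\partial U^{\circ}$ by $H=1$. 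You should replace the coercivity claim by this argument. Also, your concern about isolating critical points is not needed for the stated conclusion: once the flow on the compact one-dimensional set $\Gamma$ is Lipschitz, any nonstationary trajectory on an arc containing equilibria converges to (possibly non-isolated) equilibria at both ends, which already gives asymptotic constancy of $u(t)=\nabla H(p(t))$.
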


\begin{proof}
It follows from Theorem~\ref{th-2dim-trajectories} that there exists an affine subspace $\mathcal{A} \subset \g^*$ such that our two-dimensional coadjoint orbit is an orbit of the Engel group on $\mathcal{A}$. Moreover, $H|_{\mathcal{A}}$ is a first integral of equation~\eqref{eq-vert-subsystem}.
Note, that $H|_{\mathcal{A}} = 1$ defines a cylinder over $U^{\circ} \cap \mathcal{A}$.
Consider the section $U^{\circ} \cap \mathcal{A}$, its boundary is $W^2_{\infty}$-smooth.
Apply Proposition~4 from paper~\cite{ardentov-lokutsievskiy-sachkov} to this two-dimensional set.
We get that the conditions of classical Picard theorem are satisfied for the equation~\eqref{eq-vert-subsystem} on $H^{-1}(1) \cap (\Ad^*{G})p$.
So, a trajectory cannot get to a stationary point at a finite time.
Consequently, the corresponding extremal controls can be periodic, constant or asymptotically constant.
\end{proof}

\begin{corollary}
\label{crl-summary-control}
Consider time-optimal problem~\emph{\eqref{eq-control}} on a $3$-step free Carnot group.
Assume that the set of controls $U$ is strictly convex and $\partial U^{\circ}$ is $W^2_{\infty}$-smooth.
Then extremal controls corresponding to normal extremals with initial momenta in a two-dimensional coadjoint orbit
are periodic, constant or asymptotically constant.
\end{corollary}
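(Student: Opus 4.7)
The plan is to observe that this corollary is a direct amalgamation of Corollaries~\ref{crl-periodic-or-constant-extremal-control} and~\ref{crl-periodic-constant-or-asymptotically-constant-extremal-control}, which together cover the two mutually exclusive subcases of any two-dimensional coadjoint orbit of a three-step free Carnot group. So the job is to check that the hypotheses on $U$ in the final statement are at least as strong as those required by each of the two previous corollaries, and that the dichotomy $B^{12}_p = 0$ versus $B^{12}_p \neq 0$ exhausts the two-dimensional case.

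First I would fix an initial momentum $p \in \g^*$ whose coadjoint orbit $(\Ad^*G)p$ is two-dimensional. Appealing to Remark~\ref{rem-dim-of-coadjoint-orbits} and the case analysis inside the proof of Theorem~\ref{th-2dim-trajectories}, exactly one of the following holds: either $B^{12}_p = 0$ (case 1 there, where the orbit is an affine plane and the associated Lie group $L$ is the Heisenberg group $H_3$), or $B^{12}_p \neq 0$ (case 2, where the orbit is a parabolic cylinder and $L$ is the Engel group $E$); the third algebraic possibility $(r-k_1, r-k_2) = (2,0)$ is ruled out in that proof, and the exceptional rank-2 Cartan case also falls into one of the two genuine cases.

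Next I would split the argument along this dichotomy. In the first subcase $B^{12}_p = 0$, strict convexity of $U$ is exactly the hypothesis of Corollary~\ref{crl-periodic-or-constant-extremal-control}, whose conclusion is that the extremal control is periodic or constant; this is a fortiori one of the three regimes asserted here. In the second subcase $B^{12}_p \neq 0$, strict convexity of $U$ together with $W^2_\infty$-smoothness of $\partial U^\circ$ is exactly the hypothesis of Corollary~\ref{crl-periodic-constant-or-asymptotically-constant-extremal-control}, whose conclusion is periodic, constant, or asymptotically constant. Taking the union of these two conclusions yields the claimed trichotomy, uniformly in the choice of two-dimensional orbit.

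There is really no main obstacle: the genuine work was done in Theorem~\ref{th-2dim-trajectories} (identifying the reduced dynamics as living on a Heisenberg or Engel coadjoint plane) and in the two preceding corollaries (analyzing the phase portrait on the convex section $\mathcal{C} \cap \orbit$ via Picard's theorem and the geometry of the polar set). The only small thing worth being careful about is the bookkeeping: verifying that in the Heisenberg subcase the weaker hypothesis (strict convexity alone) genuinely suffices, so that the stronger smoothness assumption imposed in the present corollary is compatible with, rather than required for, the first subcase. Once that is noted, the proof is complete.
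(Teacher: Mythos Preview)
Your proposal is correct and matches the paper's own proof, which simply states that the result follows immediately from Corollaries~\ref{crl-periodic-or-constant-extremal-control} and~\ref{crl-periodic-constant-or-asymptotically-constant-extremal-control}. Your extra care in spelling out the dichotomy via Theorem~\ref{th-2dim-trajectories} and checking that the hypotheses nest properly is more detailed than the paper but follows exactly the same route.
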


\begin{proof}
Immediately follows from Corollaries~\ref{crl-periodic-or-constant-extremal-control}, \ref{crl-periodic-constant-or-asymptotically-constant-extremal-control}.
\end{proof}

\begin{corollary}
\label{crl-optimality-Heisenberg}
Consider an extremal trajectory for problem~\eqref{eq-control} with an initial momentum in a two-dimensional coadjoint orbit $\orbit$ such that the corresponding extremal control is periodic. This extremal trajectory cannot be optimal after the period of the control.
\end{corollary}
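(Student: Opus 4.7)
The plan is to exhibit a one-parameter family of distinct normal extremals of time $T$ all joining $\id$ to the common endpoint $g(T)$, and then to invoke the standard cut-locus argument: once two distinct normal minimizers reach the same point at the same minimal time, neither can be extended to an optimal trajectory on any strictly longer interval.

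Let $u(t)$ be the $T$-periodic extremal control, $g(t)$ the trajectory with $g(0)=\id$, and $\lambda(t) \in T^*G$ the Hamiltonian lift. The family is produced by time-shift: for each $s \in \R$, the shifted control $u_s(t) := u(t+s)$ is again $T$-periodic and, by translation-invariance of $\Hl$ and left-invariance of the problem, still defines a normal extremal; the associated trajectory starting at $\id$ is $g_s(t) := g(s)^{-1}g(t+s)$. A standard argument (the curve $h(t) := g(T)^{-1} g(t+T)$ satisfies the same ODE and initial condition as $g(t)$, so $h = g$) gives the group identity $g(t+T) = g(T)\, g(t)$, whence
\begin{equation*}
g_s(T) \;=\; g(s)^{-1} g(s+T) \;=\; \Ad_{g(s)^{-1}}\, g(T).
\end{equation*}
Under the reduction of Theorem~\ref{th-2dim-trajectories} the effective dynamics is the coadjoint action of $H_3$ or $E$ on a two-dimensional orbit, and one verifies that the one-period displacement $g(T)$ lies in the commutant of $\{g(s) : s \in [0,T]\}$; consequently $g_s(T) = g(T)$ for every $s$.

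Assuming $u$ is non-constant (otherwise the statement is vacuous), the controls $u_s$ for distinct $s \in [0,T)$ are pairwise distinct as functions of $t$, and the uniqueness theorem for the Cauchy problem yields pairwise distinct trajectories $g_s$ and hence pairwise distinct terminal momenta $\lambda_s(T) \in T^*_{g(T)} G$: two normal extremals that agree at a single moment would coincide everywhere by backward integration of the autonomous Hamiltonian $\Hl$.

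Finally, suppose toward contradiction that $g$ is optimal on $[0, T+\varepsilon]$ for some $\varepsilon > 0$. For any fixed $s \in (0,T)$ the concatenation of $g_s\big|_{[0,T]}$ with $g\big|_{[T, T+\varepsilon]}$ is a feasible trajectory of time $T+\varepsilon$ reaching $g(T+\varepsilon)$ from $\id$, and hence also optimal. But by the Pontryagin maximum principle it must then admit a single Hamiltonian lift on $[0, T+\varepsilon]$, which would need to coincide with $\lambda_s$ on $[0,T]$ and with $\lambda$ on $[T, T+\varepsilon]$, contradicting $\lambda_s(T) \neq \lambda(T)$. The main technical obstacle in the plan is the verification that $\Ad_{g(s)}\, g(T) = g(T)$: this is immediate for the Heisenberg reduction, where $g(T)$ is literally central, while in the Engel reduction it requires an explicit computation using the commutation relations and the form of periodic trajectories described in Remark~\ref{rem-engel-orbits}.
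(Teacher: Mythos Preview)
Your plan has a genuine gap at the step you yourself flag as the ``main technical obstacle'': the identity $g_s(T)=g(T)$, equivalently that $g(T)$ commutes with every $g(s)$ in $G$, is false --- even in the case you call immediate. Theorem~\ref{th-2dim-trajectories} identifies only the \emph{vertical} (coadjoint) dynamics on $\mathcal{A}\subset\g^*$ with that of $H_3$ or $E$; it does not place the horizontal trajectory $g(t)$ inside a copy of $L$ in $G$. The curve $g(t)$ lives in the full three-step free Carnot group, where $g(T)$ is not central: its $\g_1$-component vanishes by periodicity of the momentum, but its $\g_2$-component (the enclosed-area term) does not, and since $[\g_1,\g_2]=\g_3\neq 0$ in a free step-3 algebra, $g(T)$ fails to commute with any $g(s)$ having nontrivial $\g_1$-part. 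Already on the Cartan group with an orbit in $\{\xi_{112}=\xi_{212}=0\}$ one computes
\[
g(s)^{-1}\,g(T)\,g(s)=\exp\!\Bigl(\log g(T)-x_{12}(T)\bigl(x_1(s)\,\xi_{112}+x_2(s)\,\xi_{212}\bigr)\Bigr),
\]
which depends nontrivially on $s$. Hence the family $\{g_s\}$ does not share a common endpoint, and the cut-locus argument cannot be launched in $G$.

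The paper takes a different route: via Theorem~\ref{th-2dim-trajectories} it sets up an auxiliary time-optimal problem on $L\in\{H_3,E\}$ with control set $U_{\mathcal{A}}=(U^{\circ}\cap\mathcal{A})^{\circ}$, identifies the given extremal with an extremal of that lower-dimensional problem, observes that non-optimality on $L$ forces non-optimality for problem~\eqref{eq-control}, and then invokes the known cut-time results on $H_3$ and $E$ from~\cite{berestovskii-heisenberg,berestovskii-zubareva-engel}. Your time-shift construction is essentially how those cited results are proved \emph{inside} $L$; the missing ingredient in your plan is precisely this passage to $L$, which is what makes the statement ``$g(T)$ is central'' true.
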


\begin{proof}
If the extremal control is periodic, then $(U^{\circ} \cap \mathcal{A}) \cap \interior{U^{\circ}} \neq \emptyset$,
where $\interior{U^{\circ}}$ is the interior of the polar $U^{\circ}$.
Put an origin of the space $\mathcal{A}$ inside the set $U^{\circ} \cap \mathcal{A}$ and
consider the corresponding polar set $U_{\mathcal{A}} = (U^{\circ} \cap \mathcal{A})^{\circ} \subset \sspan{\{\zeta_1, \zeta_2\}}$.
The extremal trajectories of problem~\eqref{eq-control} with initial momenta from the orbit $\orbit$ are the extremal trajectories for the time-optimal problem on the Lie group $L$ (the Heisenberg group or the Engel group) with the control set $U_{\mathcal{A}}$. If an extremal trajectory is not optimal for this problem on the group $L$, then this trajectory cannot be optimal for problem~\eqref{eq-control}.
In particular, in the case of the Heisenberg group or the Engel group if the time is greater than the period of a periodic extremal control, then the corresponding extremal trajectory is not optimal~\cite{berestovskii-heisenberg,berestovskii-zubareva-engel}, consequently this trajectory is not optimal for problem~\eqref{eq-control}.
\end{proof}

\section*{\label{sec-appendix}Appendix}

Here we give Casimir functions computed according to the algorithm from Section~\ref{sec-algorithm} with the help of \texttt{Wolfram Mathematica} for the free 3-step Carnot group of rank 4.

\begin{example}
\label{ex-s3r4}
Consider the free Carnot group of step 3 and rank 4. This group has 20 linear Casimir functions that correspond to the subspace $\g_3$ and 2 Casimir functions $C_1, C_2$ of degree 5 that correspond to the subspace $\Ker{B^{12}_p}$. It follows that general coadjoint orbits are 8-dimensional affine subspaces.
\begin{small}
\begin{equation*}
\begin{array}{lrr}
C_1 = & \xi_{24} & (\xi_{123} \xi_{214} \xi_{313} \xi_{412} - \xi_{114} \xi_{223} \xi_{313} \xi_{412} - \xi_{123} \xi_{213} \xi_{314} \xi_{412} +
      \xi_{113} \xi_{223} \xi_{314} \xi_{412} + \\
      & & \xi_{114} \xi_{213} \xi_{323} \xi_{412} - \xi_{113} \xi_{214} \xi_{323} \xi_{412} -
      \xi_{123} \xi_{214} \xi_{312} \xi_{413} + \xi_{114} \xi_{223} \xi_{312} \xi_{413} + \\
      & & \xi_{123} \xi_{212} \xi_{314} \xi_{413} - \xi_{112} \xi_{223} \xi_{314} \xi_{413} - \xi_{114} \xi_{212} \xi_{323} \xi_{413} + \xi_{112} \xi_{214} \xi_{323} \xi_{413} + \\
     &  & \xi_{123} \xi_{213} \xi_{312} \xi_{414} - \xi_{113} \xi_{223} \xi_{312} \xi_{414} - \xi_{123} \xi_{212} \xi_{313} \xi_{414} +
      \xi_{112} \xi_{223} \xi_{313} \xi_{414} + \\
      & & \xi_{113} \xi_{212} \xi_{323} \xi_{414} - \xi_{112} \xi_{213} \xi_{323} \xi_{414} -
    \xi_{114} \xi_{213} \xi_{312} \xi_{423} + \xi_{113} \xi_{214} \xi_{312} \xi_{423} +  \\
    & & \xi_{114} \xi_{212} \xi_{313} \xi_{423} - \xi_{112} \xi_{214} \xi_{313} \xi_{423} - \xi_{113} \xi_{212} \xi_{314} \xi_{423} + \xi_{112} \xi_{213} \xi_{314} \xi_{423}) \\
& - \xi_{23} & (\xi_{124} \xi_{214} \xi_{313} \xi_{412} - \xi_{114} \xi_{224} \xi_{313} \xi_{412} -
    \xi_{124} \xi_{213} \xi_{314} \xi_{412} + \xi_{113} \xi_{224} \xi_{314} \xi_{412} + \\
    & & \xi_{114} \xi_{213} \xi_{324} \xi_{412} - \xi_{113} \xi_{214} \xi_{324} \xi_{412} - \xi_{124} \xi_{214} \xi_{312} \xi_{413} + \xi_{114} \xi_{224} \xi_{312} \xi_{413} +\\
  &  &  \xi_{124} \xi_{212} \xi_{314} \xi_{413} - \xi_{112} \xi_{224} \xi_{314} \xi_{413} - \xi_{114} \xi_{212} \xi_{324} \xi_{413} +
   \xi_{112} \xi_{214} \xi_{324} \xi_{413} + \\
   & & \xi_{124} \xi_{213} \xi_{312} \xi_{414} - \xi_{113} \xi_{224} \xi_{312} \xi_{414} - \xi_{124} \xi_{212} \xi_{313} \xi_{414} + \xi_{112} \xi_{224} \xi_{313} \xi_{414} + \\
   & & \xi_{113} \xi_{212} \xi_{324} \xi_{414} - \xi_{112} \xi_{213} \xi_{324} \xi_{414} - \xi_{114} \xi_{213} \xi_{312} \xi_{424} + \xi_{113} \xi_{214} \xi_{312} \xi_{424} +\\
    &  &\xi_{114} \xi_{212} \xi_{313} \xi_{424} - \xi_{112} \xi_{214} \xi_{313} \xi_{424} - \xi_{113} \xi_{212} \xi_{314} \xi_{424} +
    \xi_{112} \xi_{213} \xi_{314} \xi_{424}) + \\
 &\xi_{14}& (\xi_{124} \xi_{223} \xi_{313} \xi_{412} - \xi_{123} \xi_{224} \xi_{313} \xi_{412} -
    \xi_{124} \xi_{213} \xi_{323} \xi_{412} + \xi_{113} \xi_{224} \xi_{323} \xi_{412} + \\ & & \xi_{123} \xi_{213} \xi_{324} \xi_{412} -
    \xi_{113} \xi_{223} \xi_{324} \xi_{412} - \xi_{124} \xi_{223} \xi_{312} \xi_{413} + \xi_{123} \xi_{224} \xi_{312} \xi_{413} +\\
    &  &\xi_{124} \xi_{212} \xi_{323} \xi_{413} - \xi_{112} \xi_{224} \xi_{323} \xi_{413} - \xi_{123} \xi_{212} \xi_{324} \xi_{413} +
    \xi_{112} \xi_{223} \xi_{324} \xi_{413} + \\ & & \xi_{124} \xi_{213} \xi_{312} \xi_{423} - \xi_{113} \xi_{224} \xi_{312} \xi_{423} -
    \xi_{124} \xi_{212} \xi_{313} \xi_{423} + \xi_{112} \xi_{224} \xi_{313} \xi_{423} + \\ & & \xi_{113} \xi_{212} \xi_{324} \xi_{423} -
    \xi_{112} \xi_{213} \xi_{324} \xi_{423} - \xi_{123} \xi_{213} \xi_{312} \xi_{424} + \xi_{113} \xi_{223} \xi_{312} \xi_{424} +\\
    &  &\xi_{123} \xi_{212} \xi_{313} \xi_{424} - \xi_{112} \xi_{223} \xi_{313} \xi_{424} - \xi_{113} \xi_{212} \xi_{323} \xi_{424} +
    \xi_{112} \xi_{213} \xi_{323} \xi_{424}) \\
&- \xi_{13} &(\xi_{124} \xi_{223} \xi_{314} \xi_{412} - \xi_{123} \xi_{224} \xi_{314} \xi_{412} -
        \xi_{124} \xi_{214} \xi_{323} \xi_{412} + \xi_{114} \xi_{224} \xi_{323} \xi_{412} + \\ & & \xi_{123} \xi_{214} \xi_{324} \xi_{412} -
       \xi_{114} \xi_{223} \xi_{324} \xi_{412} - \xi_{124} \xi_{223} \xi_{312} \xi_{414} + \xi_{123} \xi_{224} \xi_{312} \xi_{414} +\\
         &  & \xi_{124} \xi_{212} \xi_{323} \xi_{414} - \xi_{112} \xi_{224} \xi_{323} \xi_{414} - \xi_{123} \xi_{212} \xi_{324} \xi_{414} +
         \xi_{112} \xi_{223} \xi_{324} \xi_{414} + \\ & & \xi_{124} \xi_{214} \xi_{312} \xi_{423} - \xi_{114} \xi_{224} \xi_{312} \xi_{423} -
         \xi_{124} \xi_{212} \xi_{314} \xi_{423} + \xi_{112} \xi_{224} \xi_{314} \xi_{423} + \\ & & \xi_{114} \xi_{212} \xi_{324} \xi_{423} -
         \xi_{112} \xi_{214} \xi_{324} \xi_{423} - \xi_{123} \xi_{214} \xi_{312} \xi_{424} + \xi_{114} \xi_{223} \xi_{312} \xi_{424} +\\
          &  &\xi_{123} \xi_{212} \xi_{314} \xi_{424} - \xi_{112} \xi_{223} \xi_{314} \xi_{424} - \xi_{114} \xi_{212} \xi_{323} \xi_{424} +
          \xi_{112} \xi_{214} \xi_{323} \xi_{424}) + \\
 &\xi_{12}& (\xi_{124} \xi_{223} \xi_{314} \xi_{413} - \xi_{123} \xi_{224} \xi_{314} \xi_{413} -
         \xi_{124} \xi_{214} \xi_{323} \xi_{413} + \xi_{114} \xi_{224} \xi_{323} \xi_{413} + \\ & & \xi_{123} \xi_{214} \xi_{324} \xi_{413} -
         \xi_{114} \xi_{223} \xi_{324} \xi_{413} - \xi_{124} \xi_{223} \xi_{313} \xi_{414} + \xi_{123} \xi_{224} \xi_{313} \xi_{414} +\\
         &  &\xi_{124} \xi_{213} \xi_{323} \xi_{414} - \xi_{113} \xi_{224} \xi_{323} \xi_{414} - \xi_{123} \xi_{213} \xi_{324} \xi_{414} +
         \xi_{113} \xi_{223} \xi_{324} \xi_{414} + \\ & & \xi_{124} \xi_{214} \xi_{313} \xi_{423} - \xi_{114} \xi_{224} \xi_{313} \xi_{423} -
         \xi_{124} \xi_{213} \xi_{314} \xi_{423} + \xi_{113} \xi_{224} \xi_{314} \xi_{423} + \\ & & \xi_{114} \xi_{213} \xi_{324} \xi_{423} -
         \xi_{113} \xi_{214} \xi_{324} \xi_{423} - \xi_{123} \xi_{214} \xi_{313} \xi_{424} + \xi_{114} \xi_{223} \xi_{313} \xi_{424} +\\
         &  &\xi_{123} \xi_{213} \xi_{314} \xi_{424} - \xi_{113} \xi_{223} \xi_{314} \xi_{424} - \xi_{114} \xi_{213} \xi_{323} \xi_{424} +
         \xi_{113} \xi_{214} \xi_{323} \xi_{424}),\\
\end{array}
\end{equation*}

\begin{equation*}
\begin{array}{lrr}
C_2 = & -\xi_{24}& (\xi_{134} \xi_{223} \xi_{314} \xi_{413} - \xi_{123} \xi_{234} \xi_{314} \xi_{413} -
    \xi_{134} \xi_{214} \xi_{323} \xi_{413} + \xi_{114} \xi_{234} \xi_{323} \xi_{413} +  \\ & & \xi_{123} \xi_{214} \xi_{334} \xi_{413} -
    \xi_{114} \xi_{223} \xi_{334} \xi_{413} - \xi_{134} \xi_{223} \xi_{313} \xi_{414} + \xi_{123} \xi_{234} \xi_{313} \xi_{414} +\\
    & & \xi_{134} \xi_{213} \xi_{323} \xi_{414} - \xi_{113} \xi_{234} \xi_{323} \xi_{414} - \xi_{123} \xi_{213} \xi_{334} \xi_{414} +
    \xi_{113} \xi_{223} \xi_{334} \xi_{414} + \\ & & \xi_{134} \xi_{214} \xi_{313} \xi_{423} - \xi_{114} \xi_{234} \xi_{313} \xi_{423} -
    \xi_{134} \xi_{213} \xi_{314} \xi_{423} + \xi_{113} \xi_{234} \xi_{314} \xi_{423} + \\ & & \xi_{114} \xi_{213} \xi_{334} \xi_{423} -
    \xi_{113} \xi_{214} \xi_{334} \xi_{423} - \xi_{123} \xi_{214} \xi_{313} \xi_{434} + \xi_{114} \xi_{223} \xi_{313} \xi_{434} +\\
    & & \xi_{123} \xi_{213} \xi_{314} \xi_{434} - \xi_{113} \xi_{223} \xi_{314} \xi_{434} - \xi_{114} \xi_{213} \xi_{323} \xi_{434} +
    \xi_{113} \xi_{214} \xi_{323} \xi_{434}) +\\
 &\xi_{23}& (\xi_{134} \xi_{224} \xi_{314} \xi_{413} - \xi_{124} \xi_{234} \xi_{314} \xi_{413} -
    \xi_{134} \xi_{214} \xi_{324} \xi_{413} + \xi_{114} \xi_{234} \xi_{324} \xi_{413} + \\ & & \xi_{124} \xi_{214} \xi_{334} \xi_{413} -
    \xi_{114} \xi_{224} \xi_{334} \xi_{413} - \xi_{134} \xi_{224} \xi_{313} \xi_{414} + \xi_{124} \xi_{234} \xi_{313} \xi_{414} + \\
    & & \xi_{134} \xi_{213} \xi_{324} \xi_{414} - \xi_{113} \xi_{234} \xi_{324} \xi_{414} - \xi_{124} \xi_{213} \xi_{334} \xi_{414} +
    \xi_{113} \xi_{224} \xi_{334} \xi_{414} + \\ & & \xi_{134} \xi_{214} \xi_{313} \xi_{424} - \xi_{114} \xi_{234} \xi_{313} \xi_{424} -
    \xi_{134} \xi_{213} \xi_{314} \xi_{424} + \xi_{113} \xi_{234} \xi_{314} \xi_{424} + \\ & & \xi_{114} \xi_{213} \xi_{334} \xi_{424} -
    \xi_{113} \xi_{214} \xi_{334} \xi_{424} - \xi_{124} \xi_{214} \xi_{313} \xi_{434} + \xi_{114} \xi_{224} \xi_{313} \xi_{434} + \\
    & & \xi_{124} \xi_{213} \xi_{314} \xi_{434} - \xi_{113} \xi_{224} \xi_{314} \xi_{434} - \xi_{114} \xi_{213} \xi_{324} \xi_{434} +
    \xi_{113} \xi_{214} \xi_{324} \xi_{434})\\
 &-\xi_{14}& (\xi_{134} \xi_{224} \xi_{323} \xi_{413} - \xi_{124} \xi_{234} \xi_{323} \xi_{413} -
    \xi_{134} \xi_{223} \xi_{324} \xi_{413} + \xi_{123} \xi_{234} \xi_{324} \xi_{413} + \\ & & \xi_{124} \xi_{223} \xi_{334} \xi_{413} -
    \xi_{123} \xi_{224} \xi_{334} \xi_{413} - \xi_{134} \xi_{224} \xi_{313} \xi_{423} + \xi_{124} \xi_{234} \xi_{313} \xi_{423} +\\
    & & \xi_{134} \xi_{213} \xi_{324} \xi_{423} - \xi_{113} \xi_{234} \xi_{324} \xi_{423} - \xi_{124} \xi_{213} \xi_{334} \xi_{423} +
    \xi_{113} \xi_{224} \xi_{334} \xi_{423} + \\ & & \xi_{134} \xi_{223} \xi_{313} \xi_{424} - \xi_{123} \xi_{234} \xi_{313} \xi_{424} -
    \xi_{134} \xi_{213} \xi_{323} \xi_{424} + \xi_{113} \xi_{234} \xi_{323} \xi_{424} + \\ & & \xi_{123} \xi_{213} \xi_{334} \xi_{424} -
    \xi_{113} \xi_{223} \xi_{334} \xi_{424} - \xi_{124} \xi_{223} \xi_{313} \xi_{434} + \xi_{123} \xi_{224} \xi_{313} \xi_{434} +\\
    & & \xi_{124} \xi_{213} \xi_{323} \xi_{434} - \xi_{113} \xi_{224} \xi_{323} \xi_{434} - \xi_{123} \xi_{213} \xi_{324} \xi_{434} +
    \xi_{113} \xi_{223} \xi_{324} \xi_{434}) +\\
 &\xi_{13}& (\xi_{134} \xi_{224} \xi_{323} \xi_{414} - \xi_{124} \xi_{234} \xi_{323} \xi_{414} -
    \xi_{134} \xi_{223} \xi_{324} \xi_{414} + \xi_{123} \xi_{234} \xi_{324} \xi_{414} + \\ & & \xi_{124} \xi_{223} \xi_{334} \xi_{414} -
    \xi_{123} \xi_{224} \xi_{334} \xi_{414} - \xi_{134} \xi_{224} \xi_{314} \xi_{423} + \xi_{124} \xi_{234} \xi_{314} \xi_{423} +\\
    & & \xi_{134} \xi_{214} \xi_{324} \xi_{423} - \xi_{114} \xi_{234} \xi_{324} \xi_{423} - \xi_{124} \xi_{214} \xi_{334} \xi_{423} +
    \xi_{114} \xi_{224} \xi_{334} \xi_{423} + \\ & & \xi_{134} \xi_{223} \xi_{314} \xi_{424} - \xi_{123} \xi_{234} \xi_{314} \xi_{424} -
    \xi_{134} \xi_{214} \xi_{323} \xi_{424} + \xi_{114} \xi_{234} \xi_{323} \xi_{424} + \\ & & \xi_{123} \xi_{214} \xi_{334} \xi_{424} -
    \xi_{114} \xi_{223} \xi_{334} \xi_{424} - \xi_{124} \xi_{223} \xi_{314} \xi_{434} + \xi_{123} \xi_{224} \xi_{314} \xi_{434} +\\
    & & \xi_{124} \xi_{214} \xi_{323} \xi_{434} - \xi_{114} \xi_{224} \xi_{323} \xi_{434} - \xi_{123} \xi_{214} \xi_{324} \xi_{434} +
    \xi_{114} \xi_{223} \xi_{324} \xi_{434}) +\\
 &\xi_{34}& (\xi_{134} \xi_{224} \xi_{323} \xi_{414} - \xi_{124} \xi_{234} \xi_{323} \xi_{414} -
    \xi_{134} \xi_{223} \xi_{324} \xi_{414} + \xi_{123} \xi_{234} \xi_{324} \xi_{414} + \\ & & \xi_{124} \xi_{223} \xi_{334} \xi_{414} -
    \xi_{123} \xi_{224} \xi_{334} \xi_{414} - \xi_{134} \xi_{224} \xi_{314} \xi_{423} + \xi_{124} \xi_{234} \xi_{314} \xi_{423} +\\
    & & \xi_{134} \xi_{214} \xi_{324} \xi_{423} - \xi_{114} \xi_{234} \xi_{324} \xi_{423} - \xi_{124} \xi_{214} \xi_{334} \xi_{423} +
    \xi_{114} \xi_{224} \xi_{334} \xi_{423} + \\ & & \xi_{134} \xi_{223} \xi_{314} \xi_{424} - \xi_{123} \xi_{234} \xi_{314} \xi_{424} -
    \xi_{134} \xi_{214} \xi_{323} \xi_{424} + \xi_{114} \xi_{234} \xi_{323} \xi_{424} + \\ & & \xi_{123} \xi_{214} \xi_{334} \xi_{424} -
    \xi_{114} \xi_{223} \xi_{334} \xi_{424} - \xi_{124} \xi_{223} \xi_{314} \xi_{434} + \xi_{123} \xi_{224} \xi_{314} \xi_{434} +\\
    & & \xi_{124} \xi_{214} \xi_{323} \xi_{434} - \xi_{114} \xi_{224} \xi_{323} \xi_{434} - \xi_{123} \xi_{214} \xi_{324} \xi_{434} +
    \xi_{114} \xi_{223} \xi_{324} \xi_{434}).\\
\end{array}
\end{equation*}
\end{small}
\end{example}


\begin{thebibliography}{99}
\begin{small}
\bibitem{argachev-sachkov}
Agrachev, A.\,A., Sachkov, Yu.\,L.: Control Theory from the Geometric Viewpoint. Encyclopaedia of Mathematical Sciences. 87. Springer-Verlag (2004)

\bibitem{agrachev-barilary-boscain}
Agrachev, A., Barilari, D., Boscain, U.: A comprehensive introduction to sub-Riemannian geometry. Cambridge University Press (2019)

\bibitem{barilari-boscain-ledonne-sigalotti}
Barilari, D., Boscain, U., Le~Donne, E., Sigalotti, M.: Sub-Finsler structures from the time-optimal control viewpoint for some nilpotent distributions.
Journal of Dynamical and Control Systems. 23, 3, 547--575 (2017)

\bibitem{berestovskii}
Berestovskii, V.\,N.: Homogeneous manifolds with an intrinsic metric. II. Sibirsk. Mat. Zh. 30, 2, 14--28 (1989)

\bibitem{agrachev-sarychev}
Agrachev, A.\,A., Sarychev, A.\,V.: Filtrations of a Lie algebra of vector fields and the nilpotent approximation of controllable systems. Dokl. Akad. Nauk SSSR. 295, 4, 777--781 (1987)

\bibitem{berestovskii-heisenberg}
Berestovskii, V.\,N.: Geodesics of nonholonomic left-invariant intrinsic metrics on the Heisenberg group and isoperimetric curves on the Minkowski plane.
Sibirsk. Mat. Zh. 35, 1, 3--11 (1994)

\bibitem{myasnichenko}
Myasnichenko, O.: Nilpotent $(3, 6)$ sub-Riemannian problem. Journal of Dynamical and Control Systems. 8, 4, 573--597 (2002)

\bibitem{rizzi-serres}
Rizzi, L., Serres, U.: On the cut locus of free, step two Carnot groups. Proc. Amer. Math. Soc. 145, 12, 5341--5357 (2017)

\bibitem{montanari-morbidelli}
Montanari, A., Morbidelli, D.: On the subRiemannian cut locus in a model of free two-step Carnot group.
Calc. Var. Partial Differential Equations. 56, 2, 36 (2017)

\bibitem{mashtakov}
Mashtakov, A.: On the step-2 nilpotent $(n, n(n + 1)/2)$ sub-Riemannian structures.
Program Systems: Theory and Applications. 9, 4, 319--360 (2018) (in Russian)

\bibitem{sachkov-didona}
Sachkov, Yu.\,L.: Exponential mapping in the generalized Dido problem. Sbornik: Mathematics. 194, 9, 1331--1360 (2003)

\bibitem{ardentov-ledonne-sachkov}
Ardentov, A., Le~Donne, E., Sachkov, Yu.: Sub-Finsler geodesics on Cartan group. Regular and Chaotic Dynamics. 24, 1, 36--60 (2019)

\bibitem{sachkov-subfinsler-2-step}
Sachkov, Yu.: Periodic controls in step 2 strictly convex sub-Finsler problems. Regular and Chaotic Dynamics. 25, 1, 33--39 (2020)

\bibitem{bizyaev-borisov-kilin-mamaev}
Bizyaev, I.\,A., Borisov, A.\,V., Kilin, A.\,A., Mamaev, I.\,S.: Integrability and Nonintegrability of Sub-Riemannian Geodesic Flows on Carnot Groups.
Regular and Chaotic Dynamics. 21, 6, 759--774 (2016)

\bibitem{lokutsievskii-sachkov}
Lokutsievskii, L., Sachkov, Yu.: Liouville nonintegrability of sub-Riemannian problems on free Carnot groups of step 4. Sbornik: Mathematics. 209, 5, 74--119 (2018)

\bibitem{sachkov-two-step}
Sachkov, Yu.\,L.: Co-adjoint orbits and time-optimal problems for two-step free-nilpotent Lie groups. arXiv:2002.07149 (2020)

\bibitem{vorontsov}
Vorontsov, A.\,S.: Invariants of Lie algebras representable as semidirect sums with a commutative ideal. Sbornik: Mathematics. 200, 8., 1149--1164 (2009)

\bibitem{sadetov}
Sadetov, S.\,T.: A proof of the Mishchenko-Fomenko conjecture. Doklady Mathematics. 70, 1, 635--638 (2002)

\bibitem{bourbaki}
Bourbaki, N.: Groupes et alg\`{e}bres de Lie. Chapitre I--III. \'{E}l\'{e}ments de math\'{e}matique. Hermann, Paris (1972)

\bibitem{kirillov}
Kirillov, A.\,A.: Lectures on the orbit method. Graduate Studies in Mathematics. 64. Providence, RI: AMS (2004)

\bibitem{serre}
Serre, J.-P.: Lie groups and Lie algebras. Lecture Notes in Mathematics. Springer-Verlag (1964)

\bibitem{pontryagin}
Pontryagin, L.\,S., Boltyanskii, V.\,G., Gamkrelidze, R.\,V., Mishchenko, E.\,F.:
The Mathematical Theory of Optimal Processes. Pergamon Press, Oxford (1964)

\bibitem{ardentov-sachkov-engel}
Ardentov, A.\,A., Sachkov, Yu.\,L.: Sub-Finsler structures on the Engel group. Doklady Mathematics. 99, 2, 171--174 (2019)

\bibitem{rockafellar}
Rockafellar, R.\,T.: Convex Analysis. Princeton University Press (1970)

\bibitem{ardentov-lokutsievskiy-sachkov}
Ardentov, A.\,A., Lokutsievskiy, L.\,V., Sachkov, Yu.\,L.: Explicit solutions for a series of classical optimization problems with 2-dimensional control via convex trigonometry. arXiv:2004.10194 (2020)

\bibitem{berestovskii-zubareva-engel}
Berestovskii, V.\,N., Zubareva, I.\,A.: Extremals of left-invariant sub-Finsler metric on the Engel group. arXiv:2001.01503 (2020)

\end{small}
\end{thebibliography}
\end{document}